\documentclass{amsart}
\usepackage{amsmath}
\usepackage{amstext}
\usepackage{amssymb}
\usepackage{amsthm}
\textheight 8.9in \textwidth 6.2in \oddsidemargin -0.05in
\evensidemargin -0.25in \topmargin -0.05in

\swapnumbers
\theoremstyle{plain}%default
\newtheorem{thm}{Theorem}[section]
\newtheorem{lem}[thm]{Lemma}
\newtheorem{prop}[thm]{Proposition}
\newtheorem{cor}[thm]{Corollary}

\theoremstyle{definition}
\newtheorem{defi}[thm]{Definition}
\newtheorem{defs}[thm]{Definitions}

\newtheorem{ex}[thm]{Example}

\newtheorem{ntn}[thm]{Notation}
\newtheorem{rmd}[thm]{Reminder}
\newtheorem{rmds}[thm]{Reminders}
\theoremstyle{remark}
\newtheorem*{note}{Note}
\newtheorem{rmk}[thm]{Remark}
\newtheorem{rmks}[thm]{Remarks}

 \DeclareMathOperator{\height}{ht}
 \DeclareMathOperator{\nd}{end}
\DeclareMathOperator{\Ext}{Ext} \DeclareMathOperator{\Hom}{Hom}
 \DeclareMathOperator{\reg}{reg}
\DeclareMathOperator{\Ker}{Ker} \DeclareMathOperator{\Ima}{Im}
\DeclareMathOperator{\depth}{depth}
\DeclareMathOperator{\grade}{grade}
\DeclareMathOperator{\rank}{rank} \DeclareMathOperator{\anch}{anch}

\DeclareMathOperator{\Ass}{Ass} 
\DeclareMathOperator{\dir}{dir} \DeclareMathOperator{\red}{red}
 \DeclareMathOperator{\ara}{ara}
 \DeclareMathOperator{\Var}{Var}
 \DeclareMathOperator{\Spec}{Spec}
 \DeclareMathOperator{\Proj}{Proj}

\DeclareMathOperator{\Max}{Max}
\DeclareMathOperator{\Min}{Min} \DeclareMathOperator{\bnd}{bnd}
\def\Z{\mathbb Z}
\def\N{\mathbb N}

\def\X{\mathbb X}
\def\Y{\mathbb Y}
\def\K{\mathbb K}

\def\fa{{\mathfrak{a}}}

\def\fb{{\mathfrak{b}}}
\def\fB{{\mathfrak{B}}}
\def\fc{{\mathfrak{c}}}

\def\fd{{\mathfrak{d}}}

\def\fm{{\mathfrak{m}}}

\def\p{{\mathfrak{p}}}
\def\fp{{\mathfrak{p}}}

\def\fq{{\mathfrak{q}}}
\def\q{{\mathfrak{q}}}

\def\R{R_{\red}}
\def\nn{\relax\ifmmode{\mathbb N_{0}}\else$\mathbb N_{0}$\fi}
\def\lra{\longrightarrow}

\begin{document}

\title[SUPPORTING DEGREES OF MULTI-GRADED LOCAL COHOMOLOGY MODULES]{SUPPORTING DEGREES OF MULTI-GRADED
LOCAL COHOMOLOGY MODULES}
\author{MARKUS P. BRODMANN}
\address[Brodmann]{Institut f\"{u}r Mathematik, Universit\"{a}t Z\"{u}rich,
Winterthurerstrasse
190, 8057 Z\"{u}rich, Switzerland\\
{\it Fax number}: 0041-44-635-5706} \email{Brodmann@math.unizh.ch}
\author{RODNEY Y. SHARP}
\address[Sharp]{Department of Pure Mathematics,
University of Sheffield, Hicks Building, Sheffield S3 7RH, United Kingdom\\
{\it Fax number}: 0044-114-222-3769}
\email{R.Y.Sharp@sheffield.ac.uk}
\thanks{The second author was partially supported by the Swiss National
Foundation (Grant Number 20-103491/1), and the Engineering and
Physical Sciences Research Council of the United Kingdom (Grant
Number EP/C538803/1).}

\subjclass[2000]{Primary 13D45, 13E05, 13A02; Secondary 13C15}

\date{\today}

\keywords{Multi-graded commutative Noetherian ring, multi-graded
local cohomology module, associated prime ideal, multi-graded
injective module, Bass numbers, shifts, anchor points, finitely
graded module, the Annihilator Theorem for local cohomology, grade.}

\begin{abstract}
For a finitely generated graded module $M$ over a positively-graded
commutative Noetherian ring $R$, the second author established in
1999 some restrictions, which can be formulated in terms of the
Castelnuovo regularity of $M$ or the so-called $a^{{\rm
*}}$-invariant of $M$, on the supporting degrees of a
graded-indecomposable graded-injective direct summand, with
associated prime ideal containing the irrelevant ideal of $R$, of
any term in the minimal graded-injective resolution of $M$. Earlier,
in 1995, T. Marley had established connections between finitely
graded local cohomology modules of $M$ and local behaviour of $M$
across $\Proj(R)$.

The purpose of this paper is to present some multi-graded analogues
of the above-mentioned work.
\end{abstract}

\maketitle

\setcounter{section}{-1}
\section{\sc Introduction}
Very briefly, the purpose of this paper is to explore multi-graded
analogues of some results in the algebra of modules, and
particularly local cohomology modules, over a commutative Noetherian
ring that is graded by the additive semigroup $\nn$ of non-negative
integers.

To describe the results that we plan to generalize, let $R =
\bigoplus_{n\in\nn}R_n$ be such a `positively-graded' commutative
Noetherian ring. Any unexplained notation in this Introduction will
be as in Chapters 12 and 13 of our book \cite{LC}. In particular,
the $\mbox{\rm *}$injective envelope of a graded $R$-module $M$ will
be denoted by $\mbox{\rm *}E(M)$ (see \cite[\S13.2]{LC}), and, for
$t \in \Z$, the $t$th shift functor (on the category $\mbox{\rm
*}\mathcal{C}(R)$ of all graded $R$-modules and homogeneous
$R$-homomorphisms) will be denoted by $ (\: {\scriptscriptstyle
\bullet} \:)(t)$ (see \cite[\S12.1]{LC}).

Let $\N$ denote the set of positive integers; set $R_+ :=
\bigoplus_{n \in \N}R_n$, the irrelevant ideal of $R$. For a graded
$R$-module $M$ and $\fp \in \mbox{\rm *}\Spec (R)$ (the set of
homogeneous prime ideals of $R$), we use $M_{(\fp)}$ to denote the
homogeneous localization of $M$ at $\fp$. For $i \in \nn$, the
ordinary Bass number $\mu^i(\fp,M)$ is equal to the rank of the
homogeneous localization $\left(\mbox{\rm
*}\Ext^i_{R}(R/\p,M)\right)_{({\mathfrak{p}})}$ as a (free) module
over $R_{({\mathfrak{p}})}/\p R_{({\mathfrak{p}})}$ (see R. Fossum
and H.-B. Foxby \cite[Corollary 4.9]{Fos-Fox74}).

Let $i \in \nn$, and consider a direct decomposition given by a
homogeneous isomorphism $$\mbox{\rm *}E^{i}(M)
\stackrel{\cong}{\longrightarrow} \bigoplus_{\alpha \in \Lambda_i}
\mbox{\rm *}E(R/{\mathfrak{p}}_{\alpha})(-n_{\alpha}),$$ for an
appropriate family $(\mathfrak{p}_{\alpha})_{\alpha \in \Lambda_i}$
of graded prime ideals of $R$ and an appropriate family
$(n_{\alpha})_{\alpha \in \Lambda_i}$ of integers. (See
\cite[\S13.2]{LC}.)

Suppose that the graded prime ideal $\fp$ contains the irrelevant
ideal $R_+$. In this case, the graded ring $R_{({\mathfrak{p}})}/\p
R_{({\mathfrak{p}})}$ is concentrated in degree $0$, and its $0$th
component is a field isomorphic to $k_{R_0}(\fp_0)$, the residue
field of the local ring $(R_0)_{\fp_0}$. Thus,
$$
\mu^i(\fp,M) = \dim_{k_{R_0}(\fp_0)} \left(\mbox{\rm
*}\Ext^i_{R}(R/\p,M)\right)_{({\mathfrak{p}})} = \sum_{t \in \Z}
\dim_{k_{R_0}(\fp_0)} \big(\big(\mbox{\rm
*}\Ext^i_{R}(R/\p,M)\big)_{({\mathfrak{p}})}\big)_t.
$$
In \cite{70}, it was shown that the graded $R_{({\mathfrak{p}})}/\p
R_{({\mathfrak{p}})}$-module $\left(\mbox{\rm
*}\Ext^i_{R}(R/\p,M)\right)_{({\mathfrak{p}})}$ carries information
about the shifts `$-n_{\alpha}$' for those $\alpha \in \Lambda _i$
for which $\fp_{\alpha} = \fp$. One has
\[
\mbox{\rm *}E(R/{\mathfrak{p}})(n) \not\cong \mbox{\rm
*}E(R/{\mathfrak{p}})(m) \mbox{~in~} \mbox{\rm *}\mathcal{C}(R)
\quad \mbox{~for~} m,n \in \Z \mbox{~with~} m \neq n,
\]
and, for a given $t \in \Z$, the cardinality of the set $\left\{
\alpha \in \Lambda_i : {\mathfrak{p}}_{\alpha} = {\mathfrak{p}}
\mbox{~and~} n_{\alpha} = t \right\}$ is equal to
$$\dim_{k_{R_0}(\fp_0)} \big(\big(\mbox{\rm
*}\Ext^i_{R}(R/\p,M)\big)_{({\mathfrak{p}})}\big)_t,$$ the
dimension of the $t$th component of $\big(\mbox{\rm
*}\Ext^i_{R}(R/\p,M)\big)_{({\mathfrak{p}})}$.

Let $\mbox{\rm *}\Var (R_+) := \{ \fq \in \mbox{\rm *}\Spec (R) :
\fq \supseteq R_+\}$. Let $\fp \in \mbox{\rm *}\Var (R_+)$, let $i
\in \nn$ and let $t \in \Z$. We say that $t$ is an {\em $i$th level
anchor point of $\fp$ for $M$\/} if
$$\big(\big(\mbox{\rm *}\Ext^i_{R}(R/\p,M)\big)_{({\mathfrak{p}})}\big)_t
\neq 0\mbox{;}$$ the set of all $i$th level anchor points of $\fp$
for $M$ is denoted by $\anch^i(\fp,M)$; also, we write
$$
\anch(\fp,M) = \bigcup_{j \in \nn}\anch^j(\fp,M),
$$ and refer to this as the set of {\em anchor points of $\fp$ for $M$}.
Thus $\anch^i(\fp,M)$ is the set of integers $h$ for which, when we
decompose
$$\mbox{\rm *}E^{i}(M) \stackrel{\cong}{\longrightarrow}
\bigoplus_{\alpha \in \Lambda_i} \mbox{\rm
*}E(R/{\mathfrak{p}}_{\alpha})(-n_{\alpha})$$ by means of a
homogeneous isomorphism, there exists $\alpha \in \Lambda_i$ with
$\fp_{\alpha} = \fp$ and $n_{\alpha} = h$. Note that $\anch^i(\fp,M)
= \emptyset$ if $\mu^i(\fp,M) = 0$, and that $\anch^i(\fp,M)$ is a
finite set when $M$ is finitely generated.

It was also shown in \cite{70} that, when the graded $R$-module $M$
is non-zero and finitely generated, the Castelnuovo regularity
$\reg(M)$ of $M$ is an upper bound for the set
$$\bigcup_{\fp \in \mbox{\rm
*}\Var (R_+)} \anch(\fp,M)$$ of all anchor points of $M$. Consequently, for each $i \geq 0$,
every $\mbox{\rm *}$indecomposable $\mbox{\rm *}$injective direct
summand $F$ of $\mbox{\rm *}E^i(M)$ with associated prime containing
$R_+$ must have $F_j = 0$ for all $j > \reg (M)$.

In \S\S\ref{mu},\ref{el} we shall present an analogue of this theory
for a standard multi-graded commutative Noetherian ring $S =
\bigoplus_{\mathbf{n} \in \nn^r} S_{\mathbf{n}}$ (where $r \in \N$
with $r \geq 2$). There is a satisfactory generalization of anchor
point theory to the multi-graded case, but we must stress now that
we have not uncovered any links between our multi-graded anchor
point theory and the fast-developing theory of multi-graded
Castelnuovo regularity (see, for example, Huy T\`ai H\`a \cite{ha}
and D. Maclagan and G. G. Smith \cite{MacSmi05}). This may be
because our multi-graded anchor point theory only yields information
about multi-graded local cohomology modules with respect to
$\nn^r$-graded ideals of $S$ that contain one of the components
$S_{(0,\ldots,0,1,0,\ldots,0)}$, whereas the ideal $S_+ :=
\bigoplus_{\mathbf{n} \in \N^r} S_{\mathbf{n}}$, which is relevant
to multi-graded Castelnuovo regularity, normally does not have that
property.

The short \S\ref{va} provides some motivation for our work in
\S\ref{tm}, where we provide multi-graded analogues of work of T.
Marley \cite{Marle95} about finitely graded local cohomology
modules. We say that a graded $R$-module $L = \bigoplus_{n\in\Z}
L_n$ is {\em finitely graded\/} precisely when $L_n \neq 0$ for only
finitely many $n \in \Z$. In \cite{Marle95}, Marley defined, for a
finitely generated graded $R$-module $M$,
$$
g_{\fa}(M) := \sup\left\{k \in \nn :  H^i_{\fa}(M) \mbox{~is
finitely graded for all~} i < k\right\},
$$
and he modified ideas of N. V. Trung and S. Ikeda in \cite[Lemma
2.2]{TruIke89} to prove that
$$
g_{\fa}(M) := \sup\left\{k \in \nn : R_+ \subseteq
\sqrt{\left(0:_RH^i_{\fa}(M)\right)} \mbox{~for all~}i <
k\right\}\mbox{;}
$$
he then used Faltings' Annihilator Theorem for local cohomology (see
\cite{Falti78} and \cite[Theorem 9.5.1]{LC}). In \S \ref{tm} below,
we shall obtain some multi-graded analogues of some of Marley's
results in this area.

\section{\sc Background results in multi-graded commutative algebra}
\label{in}

Let $R = \bigoplus_{g\in G} R_g$ be a commutative Noetherian ring
graded by a finitely generated, additively-written, torsion-free
Abelian group $G$. Some aspects of the $G$-graded analogue of the
theory of Bass numbers have been developed by S. Goto and K.-i.
Watanabe \cite[\S\S 1.2, 1.3]{GotWat78}, and it is appropriate for
us to review some of those here.

We shall denote by $\mbox{\rm *}\mathcal{C}^G(R)$ (or sometimes by
$\mbox{\rm *}\mathcal{C}(R)$ when the grading group $G$ is clear)
the category of all $G$-graded $R$-modules and $G$-homogeneous
$R$-homomorphisms of degree $0_G$ between them.  Projective
(respectively injective) objects in the category $\mbox{\rm
*}\mathcal{C}^G (R)$ will be referred to as $\mbox{\rm
*}$projective (respectively $\mbox{\rm *}$injective) $G$-graded
$R$-modules. Similarly, the attachment of `$\mbox{\rm *}$' to
other concepts indicates that they refer to the obvious
interpretations of those concepts in the category $\mbox{\rm
*}\mathcal{C}^G(R)$, although we shall sometimes use `$G$' instead
of `$\mbox{\rm *}$' in order to emphasize the grading group.
However, the following comments about $\mbox{\rm *}\Hom_R$ and the
$\mbox{\rm *}\Ext^i_R~(i \geq 0)$ may be helpful.

\begin{rmds}
\label{in.1}
Let
$M = \bigoplus_{g \in G} M_g$
and $N = \bigoplus_{g \in G} N_g$ be $G$-graded $R$-modules.

\begin{enumerate}
\item Let $a \in G$\@. We say that an $R$-homomorphism $f : M
\longrightarrow N$ is {\it $G$-homogeneous of degree $a$} precisely
when $f(M_g) \subseteq N_{g+a}$ for all $g \in G$\@. Such a
$G$-homogeneous homomorphism of degree $0_G$ is simply called {\em
$G$-homogeneous}. We denote by $\mbox{\rm
*}\Hom_R(M,N)_{a}$ the $R_{0_G}$-submodule of $\Hom_R(M,N)$
consisting of all $G$-homogeneous $R$-homomorphisms from $M$ to
$N$ of degree $a$\@. Then the sum $\sum_{a \in G} \mbox{\rm
*}\Hom_R(M,N)_{a}$ is direct, and we set
\[
\mbox{\rm *}\Hom_R(M,N) := \sum_{a \in G} \mbox{\rm *}\Hom_R(M,N)_{a}
= \bigoplus_{a \in G} \mbox{\rm *}\Hom_R(M,N)_{a}.
\]
This is an $R$-submodule of $\Hom_R(M,N)$, and the above direct
decomposition provides it with a structure as $G$-graded
$R$-module. It is straightforward to check that
$$\mbox{\rm *}\Hom_R (\: {\scriptscriptstyle \bullet} \:,\:
{\scriptscriptstyle \bullet} \:) : \mbox{\rm *}\mathcal{C}^G (R)
\times \mbox{\rm *}\mathcal{C}^G (R) \lra \mbox{\rm
*}\mathcal{C}^G (R)$$ is a left exact, additive functor.

\item If $M$ is finitely generated, then $\Hom_R(M,N)$ is
actually equal to $\mbox{\rm *}\Hom_R(M,N)$ with its $G$-grading
forgotten.

\item For $i \in \nn$, the functor $\mbox{\rm *}\Ext^i_R$ is the
$i$th right derived functor in $\mbox{\rm *}\mathcal{C}^G (R)$ of
$\mbox{\rm *}\Hom_R$. We make two comments here about the case where
$M$ is finitely generated. In that case $\Ext^i_R(M,N)$ is actually
equal to $\mbox{\rm *}\Ext^i_R(M,N)$ with its $G$-grading forgotten,
and, second, one can calculate the $\mbox{\rm
*}\Ext^i_R(M,N)$ by applying the functor $\mbox{\rm *}\Hom_R(M,\:
{\scriptscriptstyle \bullet} \:)$ to a (deleted) $\mbox{\rm
*}$injective resolution of $N$ in the category $\mbox{\rm
*}\mathcal{C}^G (R)$ and then taking cohomology of the resulting
complex.
\end{enumerate}
\end{rmds}

For $a \in G$, we shall denote the {\em $a$th shift functor\/} by $
(\: {\scriptscriptstyle \bullet} \:)(a) : \mbox{\rm
*}\mathcal{C}^G (R) \longrightarrow \mbox{\rm *}\mathcal{C}^G (R)
$: thus, for a $G$-graded $R$-module $M = \bigoplus _{g \in G}
M_{g}$, we have $(M(a))_{g} = M_{g+a}$ for all $g \in G$; also,
$f(a)\lceil\: _{(M(a))_{g}} = f\lceil\: _{M_{g+a}}$ for each
morphism $f$ in $\mbox{\rm *}\mathcal{C}^G (R)$ and all $g \in
G$\@.

\begin{thm} [S. Goto and K.-i. Watanabe {\cite[\S 1.3]{GotWat78}}]
\label{in.2}
Let $M$ be $G$-graded $R$-module, and denote by $\mbox{\rm *}\Spec (R)$ the set
of $G$-graded prime ideals of $R$.
We denote by $\mbox{\rm *}E(M)$ or $\mbox{\rm
*}E_R(M)$ `the' $\mbox{\rm *}$injective envelope of $M$, and
by $\mbox{\rm *}E^i(M)$ or $\mbox{\rm *}E^i_R(M)$ `the' $i$th term
in `the' minimal $\mbox{\rm *}$injective resolution of $M$ (for each
$i \geq 0$).
\begin{enumerate}
\item $\Ass_R\mbox{\rm *}E_R(M) = \Ass_RM$. \item We have
that $M$ is a $\mbox{\rm *}$indecomposable $\mbox{\rm *}$injective
$G$-graded $R$-module if and only if $M$ is isomorphic (in the
category $\mbox{\rm *}\mathcal{C}^G(R)$) to $\mbox{\rm
*}E(R/\mathfrak{q})(a)$ for some $\mathfrak{q} \in \mbox{\rm
*}\Spec (R)$ and $a \in G$. In this case, $\Ass_R M = \{\fq\}$ and
$\fq$ is uniquely determined by $M$. \item Let $(M_{\lambda})
_{\lambda \in \Lambda}$ be a non-empty family of $G$-graded
$R$-modules. Then $\bigoplus _{\lambda \in \Lambda} M_{\lambda}$
is $\mbox{\rm *}$injective if and only if $M_{\lambda}$ is
$\mbox{\rm *}$injective for all $\lambda \in \Lambda$\@. \item
Each $\mbox{\rm *}$injective $G$-graded $R$-module $M$ is a direct
sum of $\mbox{\rm *}$indecomposable $\mbox{\rm *}$injective
$G$-graded submodules, and this decomposition is uniquely
determined by $M$ up to isomorphisms. \item Let $i$ be a
non-negative integer. In view of part\/ {\rm (iv)}
above, there is a family $(\mathfrak{p}_{\alpha})_{\alpha \in
\Lambda_i}$ of $G$-graded prime ideals of $R$ and a family
$(g_{\alpha})_{\alpha \in \Lambda_i}$ of elements of $G$ for which
there is a $G$-homogeneous isomorphism
$$\mbox{\rm *}E^i(M)
\stackrel{\cong}{\longrightarrow} \bigoplus_{\alpha \in \Lambda_i}
\mbox{\rm *}E(R/\mathfrak{p}_{\alpha})(-g_{\alpha}).$$
Let $\mathfrak{p} \in \mbox{\rm *}\Spec (R)$\@. Then the
cardinality of the set $\left\{ \alpha \in \Lambda_i :
\mathfrak{p}_{\alpha} = \mathfrak{p} \right\}$ is equal to the
ordinary Bass number $\mu^i (\mathfrak{p},M)$ (that is, to
$\dim_{k(\fp)}\Ext^i_R(R_{\fp}/\fp R_{\fp},M_{\fp})$, where
$k(\fp)$ denotes the residue field of the local ring $R_{\fp}$).
\end{enumerate}
\end{thm}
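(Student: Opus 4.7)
The plan is to parallel Matlis's classical structure theorems for injective modules in the $G$-graded setting, using two standing technical facts: since $G$ is torsion-free abelian, every associated prime of a $G$-graded $R$-module is $G$-graded and is realised as the annihilator of a homogeneous element; and $\mbox{\rm *}E(M)$ is a $\mbox{\rm *}$essential extension of $M$ in $\mbox{\rm *}\mathcal{C}^G(R)$.

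For (i), the inclusion $\Ass_R M \subseteq \Ass_R \mbox{\rm *}E(M)$ is immediate from $M \subseteq \mbox{\rm *}E(M)$. For the converse, take $\fq \in \Ass_R \mbox{\rm *}E(M)$ and realise $\fq = \ann(x)$ for a homogeneous $x \in \mbox{\rm *}E(M)$; $\mbox{\rm *}$essentialness produces a homogeneous $r \in R$ with $0 \ne rx \in M$, and a standard argument on primary decompositions in the graded category forces $\fq \in \Ass_R M$. For (ii), given $\mbox{\rm *}$indecomposable $\mbox{\rm *}$injective $M \ne 0$, choose $\fq \in \Ass_R M$ witnessed by a homogeneous $x$ of some degree $a$, so that $Rx \cong (R/\fq)(-a)$ in $\mbox{\rm *}\mathcal{C}^G(R)$; then $\mbox{\rm *}E_R(Rx) \cong \mbox{\rm *}E(R/\fq)(-a)$ embeds in $M$ as a $\mbox{\rm *}$injective, hence direct, summand, and $\mbox{\rm *}$indecomposability forces equality. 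Uniqueness of $\fq$ then follows from (i) applied to $\mbox{\rm *}E(R/\fq)(-a)$.

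For (iii), I would verify the $G$-graded Baer criterion: it suffices to show that every $G$-homogeneous homomorphism from a graded ideal $\fa$ into $\bigoplus_{\lambda \in \Lambda} M_\lambda$ extends to $R$. Noetherianness of $R$ makes $\fa$ finitely generated by homogeneous elements, so the image of any such map lands in a finite subsum of the $M_\lambda$, where the statement reduces to the elementary fact that a finite direct sum of $\mbox{\rm *}$injectives is $\mbox{\rm *}$injective. For (iv), existence is a Zorn's lemma argument on families of $\mbox{\rm *}$indecomposable $\mbox{\rm *}$injective submodules of $M$ whose sum is direct: by (iii) the maximal such sum is $\mbox{\rm *}$injective, hence a direct summand of $M$; its complement can have no non-zero $\mbox{\rm *}$indecomposable $\mbox{\rm *}$injective summand, so by part (ii) and the existence of associated primes it must vanish. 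Uniqueness of the multiplicity of each shift $\mbox{\rm *}E(R/\fq)(-a)$ will fall out of the invariance statement in part (v).

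For (v), the cleanest route is to pass to the ordinary localisation $R_\fp$ and apply Matlis's classical Bass number formula there. The main obstacle, and really the heart of the argument, is to check that ordinary localisation at $\fp$ carries the minimal $\mbox{\rm *}$injective resolution of $M$ to a minimal injective resolution of $M_\fp$ over $R_\fp$ — this requires verifying both that the localisation of a $\mbox{\rm *}$injective is injective and that minimality is preserved (the latter via Remark \ref{in.1}(ii) and the compatibility of $\mbox{\rm *}\Hom_R$ with localisation when the first argument is finitely generated). Granted this, one uses that $(\mbox{\rm *}E(R/\fp_\alpha)(-g_\alpha))_\fp \cong E_{R_\fp}(R_\fp/\fp_\alpha R_\fp)$ when $\fp_\alpha \subseteq \fp$ and is $0$ otherwise, together with Matlis's formula in $E^i_{R_\fp}(M_\fp)$, to conclude that the cardinality of $\{\alpha \in \Lambda_i : \fp_\alpha = \fp\}$ equals the multiplicity of $E_{R_\fp}(k(\fp))$ in $E^i_{R_\fp}(M_\fp)$, which is exactly $\mu^i(\fp,M)$.
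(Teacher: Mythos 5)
The paper itself offers no proof of Theorem \ref{in.2}---it is cited from Goto--Watanabe---so I can only assess your reconstruction on its own terms. Your sketches of parts (i)--(iv) are reasonable and standard, and you correctly identify the essential technical input (for $G$ torsion-free, associated primes of graded modules are graded and are annihilators of homogeneous elements). There is, however, a genuine gap in your treatment of part (v), which you yourself flag as ``the heart of the argument'' and then take on faith.

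You assert that ordinary localisation at the graded prime $\fp$ carries the minimal $\mbox{\rm *}$injective resolution of $M$ to a minimal injective resolution of $M_\fp$ over $R_\fp$, which you base on the claim that $\bigl(\mbox{\rm *}E_R(R/\fq)\bigr)_\fp \cong E_{R_\fp}(R_\fp/\fq R_\fp)$ whenever $\fq \subseteq \fp$ are graded primes. That claim is false. Take $R = k[x,y]$ with the $\Z^2$-grading $\deg x = (1,0)$, $\deg y = (0,1)$, $\fq = (0)$ and $\fp = (x,y)$. Then $\mbox{\rm *}E_R(R/\fq) = \mbox{\rm *}E_R(R)$ is the graded ring of fractions $k[x^{\pm 1},y^{\pm 1}] = R_{xy}$, and $\bigl(\mbox{\rm *}E_R(R)\bigr)_\fp = (R_\fp)_{xy}$, which is not a field: the ideal generated by $x+y$ survives this localisation, since $x+y$ lies in $\fp$, is irreducible, and divides no product of monomials with elements of $R\setminus\fp$. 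So $(R_\fp)_{xy}$ is a proper subring of $E_{R_\fp}(R_\fp) = k(x,y)$, and in particular is not injective over $R_\fp$ (an essential extension of $R_\fp/\fq R_\fp$ with $\Ass = \{\fq R_\fp\}$ is injective if and only if it equals $E_{R_\fp}(R_\fp/\fq R_\fp)$). The moral is that $\mbox{\rm *}$injective does not imply injective once the grading group is nontrivial, and the discrepancy persists under ordinary localisation at graded primes that are not $\mbox{\rm *}$maximal; so your localised complex is not an injective resolution of $M_\fp$ and the Matlis multiplicity formula cannot be read off it. The established route instead uses the \emph{homogeneous} localisation $(\:\cdot\:)_{(\fp)}$: by Lemmas \ref{mi.0r}, \ref{ld.31} and \ref{du.3b} the homogeneously localised resolution is a minimal $\mbox{\rm *}$injective resolution over $R_{(\fp)}$, one reads off $\mbox{\rm *}\Ext^i_{R_{(\fp)}}(R_{(\fp)}/\fp R_{(\fp)}, M_{(\fp)})$ from it as a free module over the graded field $T = R_{(\fp)}/\fp R_{(\fp)}$, and one then invokes the Fossum--Foxby identity $\mu^i(\fp,M) = \rank_T\bigl(\mbox{\rm *}\Ext^i_R(R/\fp,M)\bigr)_{(\fp)}$ quoted in the Introduction. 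This is precisely the strategy the paper's Theorem \ref{mi.1} later refines, and your part (v) needs to be rebuilt along those lines.
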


A significant part of \S \ref{mu} of this paper is concerned with
the shifts `$-g_{\alpha}$' in the statement of part (v) of Theorem
\ref{in.2}. (The minus signs are inserted for notational
convenience.) In \cite{70}, the second author obtained some results
about such shifts in the special case in which $R$ is graded by the
semigroup $\nn$ of non-negative integers, and in \S \ref{mu} below,
we shall establish some multi-graded analogues.

We shall employ the following device used by Huy T\`ai H\`a
\cite[\S 2]{ha}.

\begin{defi}
\label{nt.0} Let $\phi : G \lra H$ be a homomorphism of finitely
generated torsion-free Abelian groups, and let $R =
\bigoplus_{g\in G} R_g$ be a $G$-graded commutative Noetherian
ring.

For each $h \in H$, set $R^{\phi}_h := \bigoplus_{g\in
\phi^{-1}(\{h\})} R_g$; then
$$
R^{\phi} := \bigoplus_{h\in H} R^{\phi}_h = \bigoplus_{h\in H}
\left( \bigoplus_{g\in \phi^{-1}(\{h\})} R_g\right)
$$
provides an $H$-grading on $R$, and we denote $R$ by $R^{\phi}$
when considering it as an $H$-graded ring in this way.

Furthermore, for each $G$-graded $R$-module $M = \bigoplus_{g\in G}
M_g$, set $M^{\phi}_h := \bigoplus_{g\in \phi^{-1}(\{h\})} M_g$ and
$M^{\phi} := \bigoplus_{h\in H} M^{\phi}_h$; then $M^{\phi}$ is an
$H$-graded $R^{\phi}$-module. Also, if $f : M \lra N$ is a
$G$-homogeneous homomorphism of $G$-graded $R$-modules, then the
same map $f$ becomes an $H$-homogeneous homomorphism of $H$-graded
$R^{\phi}$-modules $f^{\phi} : M^{\phi} \lra N^{\phi}$.

In this way, $(\: {\scriptscriptstyle \bullet} \:)^{\phi}$ becomes
an exact additive covariant functor from $\mbox{\rm *}\mathcal{C}^G(R)$ to
$\mbox{\rm *}\mathcal{C}^H(R)$.
\end{defi}

\begin{ntn}
\label{nt.1} We shall use $\N$ and $\nn$ to denote the sets of
positive and non-negative integers, respectively, and $r$ will
denote a fixed positive integer. Throughout the remainder of the
paper, $R := \bigoplus_{\mathbf{n}\in \Z^r} R_{\mathbf{n}}$ will
denote a commutative Noetherian ring, graded by the
additively-written finitely generated free Abelian group $\Z^r$
(with its usual addition). For $\mathbf{n} = (n_1, \ldots, n_r),~
\mathbf{m} = (m_1, \ldots, m_r) \in \Z^r$, we shall write
$$
\mathbf{n} \leq \mathbf{m} \quad \mbox{~if and only if~} \quad n_i \leq m_i
\mbox{~for all~} i = 1, \ldots,r\mbox{;}
$$
furthermore, $\mathbf{n} < \mathbf{m}$ will mean that $\mathbf{n}
\leq \mathbf{m}$ and $\mathbf{n} \neq \mathbf{m}$. The zero element
of $\Z^r$ will be denoted by $\mathbf{0}$, and, for each $i = 1,
\ldots, r$, we shall use $\mathbf{e}_i$ to denote the element of
$\Z^r$ which has $1$ in the $i$th spot and all other components
zero. Also, $\mathbf{1}$ will denote $(1, \ldots, 1) \in \Z^r$. Thus
$\mathbf{1} = \sum_{i = 1}^r \mathbf{e}_i$, and
$R_{\mathbf{e}_1}R_{\mathbf{e}_2}\ldots R_{\mathbf{e}_r} \subseteq
R_{\mathbf{1}}$.

We shall sometimes denote the $i$th component of a general member
$\mathbf{w}$ of $\Z^r$ by $w_i$ without additional explanation.

Comments made above that apply to the category $\mbox{\rm
*}\mathcal{C}^{\Z^r}(R)$ will be used without further comment. For
example, we shall say that a graded ideal of $R$ is {\em $\mbox{\rm
*}$maximal\/} if it is maximal among the set of proper $\Z^r$-graded
ideals of $R$, and that $R$ is {\em $\mbox{\rm
*}$local\/} if it has a unique $\mbox{\rm *}$maximal ideal. We shall
use $\mbox{\rm *}\Max(R)$ to denote the set of $\mbox{\rm *}$maximal
ideals of $R$.

We shall use $\mbox{\rm *}\Spec (R)$ to denote the set of
$\Z^r$-graded prime ideals of $R$; for a $\Z^r$-graded ideal $\fa$
of $R$, we shall set
$
\mbox{\rm *}\Var (\fa) := \left\{ \fp \in \mbox{\rm *}\Spec (R) : \fp \supseteq \fa
\right\}.
$
\end{ntn}

The next three lemmas are multi-graded analogues of preparatory results
in \cite[\S 1]{70}.

\begin{lem}\label{mi.0r}
Let $\mathfrak{p} \in \mbox{\rm *}\Spec (R)$ and let $a$ be an
$\Z^r$-homogeneous element of degree $\mathbf{n}$ in $R \setminus
\mathfrak{p}$. Then multiplication by $a$ provides a
$\Z^r$-homogeneous automorphism of degree $\mathbf{n}$ of $\mbox{\rm
*}E(R/{\mathfrak{p}})$\@. Also, each element of $\mbox{\rm
*}E(R/{\mathfrak{p}})$ is annihilated by some power of
$\mathfrak{p}$.

Consequently, if $S$ is a multiplicatively closed subset of
$\nn^r$-homogeneous elements of $R$ such that $S \cap \fp \neq
\emptyset$, then $S^{-1}\left(\mbox{\rm *}E(R/{\mathfrak{p}})
\right) = 0$.
\end{lem}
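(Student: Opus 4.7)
The statement divides into three assertions, and I plan to handle them in the order given. Let $M := \mbox{\rm *}E(R/\fp)$. Since $a$ is $\Z^r$-homogeneous of degree $\mathbf{n}$, multiplication by $a$ sends $M_{\mathbf{g}}$ into $M_{\mathbf{g}+\mathbf{n}}$, so it is automatically a $\Z^r$-homogeneous $R$-endomorphism $\mu_a : M \to M$ of degree $\mathbf{n}$; equivalently, one may view it as a morphism $\mu_a : M(-\mathbf{n}) \to M$ of degree $\mathbf{0}$ in $\mbox{\rm *}\mathcal{C}^{\Z^r}(R)$.

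For injectivity, I would use that the embedding $R/\fp \hookrightarrow M$ is $\mbox{\rm *}$essential, so any nonzero $\Z^r$-graded submodule of $M$ meets $R/\fp$ nontrivially. The kernel $\Ker \mu_a$ is such a graded submodule (as $\mu_a$ is homogeneous), and it intersects $R/\fp$ trivially, because $a \notin \fp$ makes multiplication by $a$ injective on the integral domain $R/\fp$; hence $\Ker \mu_a = 0$.

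Surjectivity is the step I expect to give the most trouble, and I would handle it using the $\mbox{\rm *}$indecomposable $\mbox{\rm *}$injective structure of $M$ (Theorem \ref{in.2}(ii)). The shift functor preserves $\mbox{\rm *}$injectivity, so $M(-\mathbf{n})$ is $\mbox{\rm *}$injective, and a monomorphism out of a $\mbox{\rm *}$injective object always splits in $\mbox{\rm *}\mathcal{C}^{\Z^r}(R)$; thus $\Ima \mu_a$ is a $\mbox{\rm *}$direct summand of $M$. Since $M$ is $\mbox{\rm *}$indecomposable and $\Ima \mu_a$ is nonzero (being an isomorphic copy of $M(-\mathbf{n})$), one is forced to conclude $\Ima \mu_a = M$. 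The delicate point is that the splitting must occur in the graded category, so both the $\mbox{\rm *}$injectivity of the shift and the $\mbox{\rm *}$indecomposability of $M$ are essential ingredients.

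For the $\fp$-torsion claim, take $x \in M \setminus \{0\}$ and consider the finitely generated $R$-submodule $Rx$. By Theorem \ref{in.2}(i) we have $\Ass_R M = \Ass_R R/\fp = \{\fp\}$, whence $\emptyset \neq \Ass_R Rx \subseteq \{\fp\}$, so $\sqrt{\ann_R(x)} = \fp$; since $\fp$ is finitely generated, some power $\fp^m$ annihilates $x$. The final assertion is then immediate: given $s \in S \cap \fp$, each $x \in M$ satisfies $s^m x = 0$ for sufficiently large $m$ (as $s^m \in \fp^m$), so $x/1 = 0$ in $S^{-1}M$ and thus $S^{-1}M = 0$.
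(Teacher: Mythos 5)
Your proof is correct and follows essentially the same approach as the paper's: injectivity of $\mu_a$ via $*$essentiality, surjectivity via $*$injectivity of the shift and $*$indecomposability (you view $\mu_a$ as $M(-\mathbf{n})\to M$ while the paper views it as $M \to M(\mathbf{n})$, but the argument is the same), and the $\fp$-torsion claim via $\Ass_R\,{*}E(R/\fp) = \{\fp\}$. You merely spell out the associated-primes step in more detail than the paper does.
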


\begin{proof} Multiplication by $a$ provides a $\Z^r$-homogeneous
$R$-homomorphism
$$
\mu_a: \mbox{\rm *}E(R/{\mathfrak{p}}) \lra
\mbox{\rm *}E(R/{\mathfrak{p}})(\mathbf{n}).
$$
Since $\Ker \mu_a$ has zero intersection with $R/\mathfrak{p}$, it
follows that $\mu_a$ is injective. In view of Theorem
\ref{in.2}(ii), $\Ima \mu_a$ is a non-zero $\mbox{\rm *}$injective
$\Z^r$-graded submodule of the $\mbox{\rm *}$indecomposable
$\mbox{\rm
*}$injective $\Z^r$-graded $R$-module $\mbox{\rm
*}E(R/{\mathfrak{p}})(\mathbf{n})$. Hence $\mu_a$ is surjective.

The fact that each element of $\mbox{\rm *}E(R/{\mathfrak{p}})$ is
annihilated by some power of $\mathfrak{p}$ follows from Theorem
\ref{in.2}(i), which shows that $\fp$ is the only associated
prime ideal of each non-zero cyclic submodule of $\mbox{\rm
*}E(R/{\mathfrak{p}})$. The final claim is then immediate.
\end{proof}

The next two lemmas below can be proved by making obvious
modifications to the proofs of the (well-known) `ungraded' analogues.

\begin{lem}\label{ld.31}
Let $f : L \longrightarrow M$ be a $\Z^r$-homogeneous homomorphism
of $\Z^r$-graded $R$-modules such that $M$ is a $\mbox{\rm
*}$essential extension of\/ $\Ima f$. Let $S$ be a
multiplicatively closed subset of $\Z^r$-homogeneous elements of
$R$\@. Then $S^{-1}M$ is a $\mbox{\rm *}$essential extension of its
$\Z^r$-graded submodule $\Ima (S^{-1}f)$.
\end{lem}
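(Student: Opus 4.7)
\emph{Proof plan.} The plan is to adapt the standard ungraded argument showing that localization preserves essential extensions, staying within the graded category and exploiting Noetherianness of $R$ to control $S$-torsion. Let $N$ be a nonzero $\Z^r$-graded submodule of $S^{-1}M$ and pick a nonzero homogeneous $y = m/s \in N$ with $m \in M$ and $s \in S$ both homogeneous; the relation $y \neq 0$ forces $\ann_R(m) \cap S = \emptyset$.

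First I would carry out a Noetherian stabilisation. For each $t \in S$ the ideal $\ann_R(tm) \subseteq R$ is $\Z^r$-graded (since $tm$ is homogeneous), and the family $\{\ann_R(tm)\}_{t \in S}$ is upward-directed under inclusion because $\ann_R(tm),\ann_R(t'm) \subseteq \ann_R(tt'm)$ for $t,t' \in S$. Since $R$ is Noetherian, this family has a maximum, of the form $\ann_R(m')$ where $m' := s_0 m$ for some $s_0 \in S$; consequently $\ann_R(tm) \subseteq \ann_R(m')$ for every $t \in S$. Note that $m' \neq 0$ because $\ann_R(m) \cap S = \emptyset$.

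Next I would feed the nonzero $\Z^r$-graded submodule $Rm' \subseteq M$ into the $\mbox{\rm *}$essentialness hypothesis on $\Ima f \subseteq M$ to produce a homogeneous $r \in R$ with $0 \neq rm' \in \Ima f$. I would then verify that $ry = rm/s$ is a nonzero homogeneous element of $N \cap \Ima(S^{-1}f)$: \emph{nonzeroness} follows from the stabilisation, since any relation $trm = 0$ with $t \in S$ would put $r$ into $\ann_R(tm) \subseteq \ann_R(m')$, contradicting $rm' \neq 0$; \emph{membership} in $\Ima(S^{-1}f) = S^{-1}(\Ima f)$ (using exactness of the localization functor) follows from $rm/s = rm'/(s s_0)$; and $ry \in N$ because $N$ is an $R$-submodule of $S^{-1}M$.

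The main obstacle is exactly what motivates the stabilisation step: a naive application of $\mbox{\rm *}$essentialness to $Rm$ itself produces a nonzero $rm \in \Ima f$, but that element may well be $S$-torsion and so vanish in $S^{-1}M$, failing to witness the intersection. The Noetherian stabilisation is the device that lets us promote $m$ to an $m' = s_0 m$ on whose annihilator the chain becomes stationary, after which any $r$ supplied by $\mbox{\rm *}$essentialness on $Rm'$ automatically has the property that $rm$ survives the localization.
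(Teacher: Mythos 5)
Your proposal is correct and is exactly the argument the paper intends: the paper's proof consists of the single instruction ``Modify the proof of \cite[11.1.5]{LC} in the obvious way,'' and your write-up is that modification carried out in full, with the Noetherian stabilisation of the annihilators $\ann_R(tm)$ ($t\in S$) playing the same role as in the ungraded case and homogeneity tracked correctly throughout.
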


\begin{proof} Modify the proof of \cite[11.1.5]{LC} in the obvious way.
\end{proof}

\begin{lem}\label{du.3b}
Let $S$ be a multiplicatively closed subset of $\Z^r$-homogeneous
elements of $R$, and let ${\mathfrak{p}} \in \mbox{\rm *}\Spec (R)$
be such that ${\mathfrak{p}} \cap S = \emptyset$\@. Then

\begin{enumerate}
\item the natural map $\mbox{\rm *}E_R(R/{\mathfrak{p}}) \longrightarrow S^{-1}
(\mbox{\rm *}E_R(R/{\mathfrak{p}}))$ is a $\Z^r$-homogeneous
$R$-isomor\-p\-h\-ism, so that $\mbox{\rm *}E_R(R/{\mathfrak{p}})$
has a natural structure as a $\Z^r$-graded $S^{-1}R$-module;
\item there is a $\Z^r$-homogeneous isomorphism (in $\mbox{\rm *}\mathcal{C}(S^{-1}R)$)
$$\mbox{\rm *}E_R(R/{\mathfrak{p}}) \cong
\mbox{\rm *}E_{S^{-1}R}(S^{-1}R/S^{-1}{\mathfrak{p}})\mbox{;}$$
\item $\mbox{\rm *}E_{S^{-1}R}(S^{-1}R/S^{-1}{\mathfrak{p}})$,
when considered as a $\Z^r$-graded $R$-module by means of the
natural homomorphism $R \longrightarrow S^{-1}R$, is
$\Z^r$-homogeneously isomorphic to $\mbox{\rm
*}E_R(R/{\mathfrak{p}})$;
\item for each $\mathbf{n} \in \Z^r$,
there is a $\Z^r$-homogeneous isomorphism (in $\mbox{\rm
*}\mathcal{C}(S^{-1}R)$)
$$
S^{-1}\big(\mbox{\rm *}E_R(R/{\mathfrak{p}})(\mathbf{n})\big)
\cong
\mbox{\rm *}E_{S^{-1}R}(S^{-1}R/S^{-1}{\mathfrak{p}})(\mathbf{n})\mbox{;}
$$
\item
if $I$ is a $\mbox{\rm *}$injective $\Z^r$-graded $R$-module, then
the $\Z^r$-graded $S^{-1}R$-module $S^{-1}I$ is $\mbox{\rm
*}$injective.
\end{enumerate}
\end{lem}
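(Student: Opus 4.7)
The plan is to establish the five parts sequentially, each building on its predecessors. Everything is a routine graded adaptation of the ungraded analogues, once Lemma \ref{mi.0r} (every $\Z^r$-homogeneous element of $R \setminus \fp$ acts as an automorphism on $\mbox{\rm *}E_R(R/\fp)$) and Lemma \ref{ld.31} (localization preserves $\mbox{\rm *}$essential extensions) are installed as the principal graded substitutes for their classical counterparts. The logical order is (i), then (ii), from which (iii) and (iv) follow immediately, and finally (v).

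For (i), the point is that for each $s \in S$, Lemma \ref{mi.0r} provides a $\Z^r$-homogeneous automorphism $\mu_s$ of $\mbox{\rm *}E_R(R/\fp)$. Hence the natural map $x \mapsto x/1$ from $\mbox{\rm *}E_R(R/\fp)$ to $S^{-1}\mbox{\rm *}E_R(R/\fp)$ is injective (nothing is annihilated by an element of $S$) and surjective (given $x/s$, set $y := \mu_s^{-1}(x)$ and observe that $y/1 = x/s$). This map is plainly $\Z^r$-homogeneous of degree $\mathbf{0}$, so the $S^{-1}R$-structure on the target transports back to endow $\mbox{\rm *}E_R(R/\fp)$ with a $\Z^r$-graded $S^{-1}R$-structure in which $(r/s) \cdot x = \mu_s^{-1}(rx)$.

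For (ii), localize the inclusion $R/\fp \hookrightarrow \mbox{\rm *}E_R(R/\fp)$: its image is the $\Z^r$-graded $S^{-1}R$-submodule $S^{-1}R/S^{-1}\fp$ of $S^{-1}\mbox{\rm *}E_R(R/\fp) = \mbox{\rm *}E_R(R/\fp)$, and by Lemma \ref{ld.31} the containment is $\mbox{\rm *}$essential over $S^{-1}R$. What remains, and is the step I expect to require the most care, is to verify that $\mbox{\rm *}E_R(R/\fp)$ is $\mbox{\rm *}$injective as an $S^{-1}R$-module. The cleanest route is a graded Baer criterion. Given a $\Z^r$-graded ideal $J \subseteq S^{-1}R$, write $J = S^{-1}\fa$ for its contraction $\fa$, a $\Z^r$-graded ideal of $R$, and let $f : J \to \mbox{\rm *}E_R(R/\fp)$ be a $\mbox{\rm *}$homogeneous $S^{-1}R$-linear map. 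Restrict $f$ to an $R$-linear map $g : \fa \to \mbox{\rm *}E_R(R/\fp)$, extend $g$ by $\mbox{\rm *}$injectivity over $R$ to $\tilde g : R \to \mbox{\rm *}E_R(R/\fp)$, and finally define $\tilde f : S^{-1}R \to \mbox{\rm *}E_R(R/\fp)$ by $\tilde f(r/s) := \mu_s^{-1}(\tilde g(r))$. Well-definedness uses Lemma \ref{mi.0r} to absorb $S$-torsion ($t(s_2 r_1 - s_1 r_2) = 0$ in $R$ implies $s_2 \tilde g(r_1) = s_1 \tilde g(r_2)$ after applying $\mu_t^{-1}$), and $S^{-1}R$-linearity together with the restriction property of $\tilde f$ are direct. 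By uniqueness of $\mbox{\rm *}$injective envelopes, $\mbox{\rm *}E_R(R/\fp) \cong \mbox{\rm *}E_{S^{-1}R}(S^{-1}R/S^{-1}\fp)$.

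Part (iii) is then a restatement of (ii), once one observes that the $R$-module structure on $\mbox{\rm *}E_{S^{-1}R}(S^{-1}R/S^{-1}\fp)$ obtained by restriction of scalars along $R \to S^{-1}R$ coincides with the structure transported through the isomorphism of (ii). Part (iv) follows by combining (ii) with the functorial identity $S^{-1}(M(\mathbf{n})) = (S^{-1}M)(\mathbf{n})$ that holds for any $\Z^r$-graded $R$-module $M$. For (v), apply Theorem \ref{in.2}(iv) to write $I \cong \bigoplus_{\lambda \in \Lambda} \mbox{\rm *}E_R(R/\fq_\lambda)(\mathbf{n}_\lambda)$. Localization commutes with direct sums, and each summand either vanishes upon localization (when $\fq_\lambda \cap S \neq \emptyset$, by the final clause of Lemma \ref{mi.0r}) or becomes $\mbox{\rm *}E_{S^{-1}R}(S^{-1}R/S^{-1}\fq_\lambda)(\mathbf{n}_\lambda)$ (when $\fq_\lambda \cap S = \emptyset$, by (iv)). In either case the result is $\mbox{\rm *}$injective over $S^{-1}R$, and the direct sum is $\mbox{\rm *}$injective by Theorem \ref{in.2}(iii). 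The only genuine obstacle is the $\mbox{\rm *}$injectivity verification in (ii); everything else is bookkeeping.
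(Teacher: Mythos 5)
Your proof is correct and follows essentially the same route as the paper, which simply cites \cite[10.1.11, 10.1.13(ii)]{LC} for parts (ii) and (v) ``with the obvious modifications'' and invokes Lemmas \ref{mi.0r} and \ref{ld.31} for the rest; your explicit graded Baer-criterion argument for the $\mbox{\rm *}$injectivity of $\mbox{\rm *}E_R(R/\fp)$ over $S^{-1}R$, and your termwise localization of a Matlis-type decomposition via Theorem \ref{in.2}(ii)--(iv) for part (v), are legitimate realizations of those cited modifications. One cosmetic point: the final clause of Lemma \ref{mi.0r} as stated speaks of $\nn^r$-homogeneous elements, while here $S$ consists of $\Z^r$-homogeneous ones, but the argument (every element of $\mbox{\rm *}E(R/\fq_\lambda)$ is killed by a power of $\fq_\lambda$) is unaffected, so the use you make of it is sound.
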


\begin{proof} (i) This is immediate from \ref{mi.0r}.

(ii) One can make the obvious modifications to the proof of
\cite[10.1.11]{LC} to see that, as a $\Z^r$-graded
$S^{-1}R$-module, $\mbox{\rm *}E_R(R/{\mathfrak{p}})$ is
$\mbox{\rm *}$injective; it is also $\Z^r$-homogeneously
isomorphic, as a $\Z^r$-graded  $S^{-1}R$-module,
to $S^{-1}(\mbox{\rm *}E_R(R/{\mathfrak{p}}))$. One can
use \ref{ld.31} to see that $S^{-1}(\mbox{\rm
*}E_R(R/{\mathfrak{p}}))$ is a $\mbox{\rm *}$essential extension
of $S^{-1}R/S^{-1}{\mathfrak{p}}$. The claim follows.

(iii), (iv) These are now easy.

(v) This can now be proved by making the obvious modifications
to the proof of \cite[10.1.13(ii)]{LC}.
\end{proof}

\section{\sc A multi-graded analogue of anchor point theory}
\label{mu}

\begin{defi}
\label{mu.1} We shall say that $R$ is {\em positively graded}
precisely when $R_{\mathbf{n}} = 0$ for all $\mathbf{n} \not\geq
\mathbf{0}$. When that is the case, we say that $R$ (as in
\ref{nt.1}) is {\em standard\/} precisely when $R =
R_{\mathbf{0}}[R_{\mathbf{e}_1}, \ldots,R_{\mathbf{e}_r}]$.
\end{defi}

The main results of this paper will concern the case where $R$ is
positively graded and standard.

\begin{lem}
\label{mu.2} Suppose that $R := \bigoplus_{\mathbf{n}\in \nn^r}
R_{\mathbf{n}}$ is positively graded and standard. If $\fa$ is an
$\nn^r$-graded ideal of $R$ such that $\fa \supseteq
R_{\mathbf{t}}$ for some $\mathbf{t} \in \nn^r$, then $\fa
\supseteq R_{\mathbf{n}}$ for each $\mathbf{n} \in \nn^r$ with
$\mathbf{n} \geq \mathbf{t}$.
\end{lem}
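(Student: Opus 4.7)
The plan is to exploit the standardness hypothesis to show that the grading behaves ``multiplicatively'' in the strong sense that $R_{\mathbf{a}}\cdot R_{\mathbf{b}} = R_{\mathbf{a}+\mathbf{b}}$ for all $\mathbf{a},\mathbf{b} \in \nn^r$, and then obtain the claim by writing $\mathbf{n} = \mathbf{t} + \mathbf{s}$ with $\mathbf{s} := \mathbf{n}-\mathbf{t} \in \nn^r$.

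First I would unpack what standardness gives us. Since $R = R_{\mathbf{0}}[R_{\mathbf{e}_1},\ldots,R_{\mathbf{e}_r}]$, any element of $R_{\mathbf{a}}$ (with $\mathbf{a} = (a_1,\ldots,a_r) \in \nn^r$) is an $R_{\mathbf{0}}$-linear combination of monomials $x_{1,1}\cdots x_{1,a_1}\,x_{2,1}\cdots x_{2,a_2}\cdots x_{r,1}\cdots x_{r,a_r}$ with each $x_{i,j} \in R_{\mathbf{e}_i}$. In particular, $R_{\mathbf{a}}$ is the $R_{\mathbf{0}}$-submodule generated by the products $R_{\mathbf{e}_1}^{a_1}\cdots R_{\mathbf{e}_r}^{a_r}$. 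Consequently, for $\mathbf{a},\mathbf{b} \in \nn^r$, by splitting such a monomial for $\mathbf{a}+\mathbf{b}$ into its first $a_i$ and last $b_i$ factors in each slot, one gets $R_{\mathbf{a}+\mathbf{b}} \subseteq R_{\mathbf{a}}\cdot R_{\mathbf{b}}$; the reverse inclusion is automatic from the $\Z^r$-grading.

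Now apply this with $\mathbf{a} = \mathbf{t}$ and $\mathbf{b} = \mathbf{n}-\mathbf{t}$ (which lies in $\nn^r$ by hypothesis $\mathbf{n}\geq\mathbf{t}$). Then
\[
R_{\mathbf{n}} = R_{\mathbf{t}}\cdot R_{\mathbf{n}-\mathbf{t}} \subseteq \fa \cdot R_{\mathbf{n}-\mathbf{t}} \subseteq \fa,
\]
since $R_{\mathbf{t}}\subseteq \fa$ and $\fa$ is an ideal. This yields the desired conclusion.

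There is really no obstacle here: the statement is essentially a bookkeeping consequence of the standardness definition, and the only step that requires a moment's thought is the identity $R_{\mathbf{a}+\mathbf{b}} = R_{\mathbf{a}}\cdot R_{\mathbf{b}}$, which is handled by the monomial-splitting argument above.
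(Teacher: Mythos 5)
Your proof is correct and follows exactly the same route as the paper's: write $\mathbf{n} = \mathbf{t} + (\mathbf{n}-\mathbf{t})$ and use standardness to get $R_{\mathbf{n}} = R_{\mathbf{t}}R_{\mathbf{n}-\mathbf{t}} \subseteq \fa$. You simply spell out, via the monomial-splitting observation, the identity $R_{\mathbf{a}+\mathbf{b}} = R_{\mathbf{a}}R_{\mathbf{b}}$ that the paper takes for granted.
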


\begin{proof} Since $R$ is standard, $R_{\mathbf{n}} =
R_{\mathbf{t}}R_{\mathbf{n-t}}$, and so is contained in $\fa$.
\end{proof}

\begin{defi}
\label{mu.3} Suppose that $R := \bigoplus_{\mathbf{n}\in \nn^r}
R_{\mathbf{n}}$ is positively graded and standard. Let $\fp \in
\mbox{\rm *}\Spec (R)$. The set $\left\{j \in \{1, \ldots, r\} :
R_{\mathbf{e}_j} \subseteq \fp\right\}$ will be called {\em the set
of $\fp$-directions\/} and will be denoted by $\dir (\fp)$.

Observe that, if $i \in \dir (\fp)$, then $\fp \supseteq
R_{\mathbf{1}}$ by \ref{mu.2}. Conversely, if $\fp \supseteq
R_{\mathbf{1}}$, then, since $R_{\mathbf{1}} =
R_{\mathbf{e}_1}\ldots R_{\mathbf{e}_r}$, there exists $i \in \{1,
\ldots,r\}$ such that $R_{\mathbf{e}_i} \subseteq \fp$, and $i \in
\dir(\fp)$. Thus $\dir(\fp) \neq \emptyset$ if and only if $\fp
\supseteq R_{\mathbf{1}}$.

More generally, let $\fb$ be an $\nn^r$-graded ideal of $R$. We
define {\em the set of $\fb$-directions\/} to be $$\dir(\fb) :=
\left\{j \in \{1, \ldots, r\} : R_{\mathbf{e}_j} \subseteq
\sqrt{\fb}\right\}.$$ The members of the set $\{1,\ldots,r\}
\setminus \dir(\fb)$ are called the {\em non-$\fb$-directions\/.} It
is easy to see that $\dir(\fb) = \bigcap_{\fp \in
\Min(\fb)}\dir(\fp)$, where $\Min(\fb)$ denotes the set of minimal
prime ideals of $\fb$.
\end{defi}

\begin{rmk}
\label{mu.4a} It follows from Lemma \ref{mu.2} that, in the
situation of Definition \ref{mu.3}, each $\nn^r$-homogeneous element
of $R \setminus \fp$ has degree with $i$th component $0$ for all $i
\in \dir(\fp)$.
\end{rmk}

\begin{prop}
\label{mu.4} Suppose that $R := \bigoplus_{\mathbf{n}\in \nn^r}
R_{\mathbf{n}}$ is positively graded and standard. Let $\fp \in
\mbox{\rm *}\Var (R_{\mathbf{1}}R)$. For notational convenience,
suppose that $\dir (\fp) = \{1, \ldots, m\}$, where $0 < m \leq r$.
For each $i \in \{1, \ldots, r\} \setminus \dir (\fp) =
\{m+1,\ldots,r\}$, select $u_i \in R_{\mathbf{e}_i} \setminus \fp$.

Let $\mathbf{a} = (a_{1}, \ldots, a_m) \in \Z^{m}$. For $\mathbf{c}
= (c_{m+1}, \ldots, c_r) \in \Z^{r-m}$, we shall denote by
$\mathbf{a}| \mathbf{c}$ the element $(a_{1}, \ldots, a_m,c_{m+1},
\ldots, c_r)$ of $\Z^r$ obtained by juxtaposition.

\begin{enumerate}
\item For all choices of $\mathbf{c}, \mathbf{d} \in \Z^{r-m}$,
there is an isomorphism
of $R_{\mathbf{0}}$-modules
$$
(\mbox{\rm *} E_R(R/\p))_{\mathbf{a}|\mathbf{c}} \cong (\mbox{\rm
*} E_R(R/\p))_{\mathbf{a}|\mathbf{d}}.
$$
(Note that this does not say anything of interest if $m = r$.)

\item If $(\mbox{\rm *} E_R(R/\p))_{\mathbf{a}|\mathbf{c}} \neq 0$ for any
$\mathbf{c} \in \Z^{r-m}$, then $\mathbf{a} \leq \mathbf{0}$.

\item Let $T := R_{(\fp)}/\fp R_{(\fp)}$, where $R_{(\fp)}$ is the
$\Z^r$-homogeneous localization of $R$ at $\fp$. Then
\begin{enumerate}
\item $T$ is a simple $\Z^r$-graded ring in
the sense of\/ {\rm \cite[Definition 1.1.1]{GotWat78}};
\item $T_{\mathbf{0}}$ is a field;
\item for each $\mathbf{c} = (c_{m+1}, \ldots, c_r) \in \Z^{r-m}$,
$$
T_{\mathbf{a}|\mathbf{c}} = \begin{cases}
0 & \text{if $\mathbf{a} \neq \mathbf{0}$},\\
T_{\mathbf{0}}(\overline{u_{m+1}/1})^{c_{m+1}}\ldots(\overline{u_r/1})^{c_r}
& \text{if $\mathbf{a} = \mathbf{0}$} \end{cases}
$$
(where `$\overline{\phantom{u_1/1}}$' is used to denote natural
images of elements of $R_{(\fp)}$ in $T$); and
\item every $\Z^r$-graded $T$-module is free.
\end{enumerate}

\item We have $(0:_{\mbox{\rm
*}E_{R_{(\fp)}}(R_{(\fp)}/{\mathfrak{p}}R_{(\fp)})} \p R_{(\fp)})
= R_{(\fp)}/{\mathfrak{p}}R_{(\fp)}$.

\item If $\mathbf{a}, \mathbf{b} \in \Z^m$ and $\mathbf{c},
\mathbf{d} \in \Z^{r-m}$, and there is a $\Z^r$-homogeneous
isomorphism
$$
(\mbox{\rm *} E_R(R/\p))(\mathbf{a}|\mathbf{c}) \cong (\mbox{\rm *}
E_R(R/\p))(\mathbf{b}|\mathbf{d}),
$$
then $\mathbf{a} = \mathbf{b}$.
\end{enumerate}
\end{prop}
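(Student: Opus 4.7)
The overall strategy is to pass to the $\Z^r$-homogeneous localization via Lemma \ref{du.3b}(ii), identifying $\mbox{\rm *}E_R(R/\fp)$ with $\mbox{\rm *}E_{R_{(\fp)}}(T)$ where $T := R_{(\fp)}/\fp R_{(\fp)}$, and to use Remark \ref{mu.4a} to rigidly pin down degrees in the $\fp$-directions $\{1,\ldots,m\}$. For (i), each $u_i$ with $i>m$ is a unit of $R_{(\fp)}$, so by Lemma \ref{mi.0r} multiplication by $u_i$ is a $\Z^r$-homogeneous $R$-automorphism of $\mbox{\rm *}E_R(R/\fp)$ of degree $\mathbf{e}_i$; composing these automorphisms (with inverses for negative exponents) yields the required $R_{\mathbf{0}}$-linear bijection between the components of degrees $\mathbf{a}|\mathbf{c}$ and $\mathbf{a}|\mathbf{d}$. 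For (ii), take $0\neq x \in (\mbox{\rm *}E_R(R/\fp))_{\mathbf{a}|\mathbf{c}}$ homogeneous; $\mbox{\rm *}$essentiality of $R/\fp \subseteq \mbox{\rm *}E_R(R/\fp)$ provides a homogeneous $r \in R$ with $0\neq rx \in R/\fp$. Then $R_{\deg(rx)}\not\subseteq\fp$, so by Remark \ref{mu.4a} the first $m$ coordinates of $\deg(rx)=\deg(r)+\mathbf{a}|\mathbf{c}$ all vanish; since $(\deg r)_{1,\ldots,m}\in\nn^m$, this forces $\mathbf{a}=-(\deg r)_{1,\ldots,m}\leq\mathbf{0}$.

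For (iii), I would first observe that $T$ is a graded field: any nonzero homogeneous element is the class of $u/v$ with $u,v\in R\setminus\fp$ homogeneous, and $\overline{v/u}$ is a homogeneous inverse in $T$. This yields (a), and (b) follows since every nonzero element of $T_{\mathbf{0}}$ is consequently invertible in $T_{\mathbf{0}}$. For (c), Remark \ref{mu.4a} forces every nonzero homogeneous element of $T$ to sit in a degree of the form $\mathbf{0}|\mathbf{c}$; conversely $(\overline{u_{m+1}/1})^{c_{m+1}}\cdots(\overline{u_r/1})^{c_r}$ is a nonzero element of $T_{\mathbf{0}|\mathbf{c}}$, and because $T$ is a graded field the one-dimensional $T_{\mathbf{0}}$-space $T_{\mathbf{0}|\mathbf{c}}$ coincides with $T_{\mathbf{0}}$ times this monomial. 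Part (d) is the standard observation that any graded module over a graded field has a homogeneous basis: a minimal homogeneous generating set is automatically linearly independent, because in any putative nontrivial homogeneous relation the homogeneous unit coefficients could be inverted to express one generator in terms of the others.

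For (iv), let $E:=\mbox{\rm *}E_{R_{(\fp)}}(T)$ and let $0\neq x\in E$ be homogeneous with $\fp R_{(\fp)}x=0$. Then the graded annihilator of $x$ is a graded ideal of the $\mbox{\rm *}$local ring $R_{(\fp)}$ containing its unique $\mbox{\rm *}$maximal ideal $\fp R_{(\fp)}$; since $x\neq 0$ it must equal $\fp R_{(\fp)}$, whence $R_{(\fp)}x\cong T(-\deg x)$ is $\mbox{\rm *}$simple. $\mbox{\rm *}$essentiality forces $R_{(\fp)}x\cap T\neq 0$, and simplicity then yields $R_{(\fp)}x\subseteq T$, so $x\in T$; the reverse inclusion is immediate. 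For (v), the functor $(0:_{(\cdot)}\fp R_{(\fp)})$ commutes with $\Z^r$-graded shifts, so applying it to the given isomorphism and using (iv) produces a $\Z^r$-homogeneous isomorphism $T(\mathbf{a}|\mathbf{c})\cong T(\mathbf{b}|\mathbf{d})$. By (iii)(c) the support of $T(\mathbf{x})$ lies in degrees whose first $m$ coordinates equal $-\mathbf{x}_{1,\ldots,m}$, so matching supports forces $\mathbf{a}=\mathbf{b}$.

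The main conceptual issue is keeping the $\fp$-directions and non-$\fp$-directions cleanly separated: in the $\fp$-directions the degrees of $\mbox{\rm *}E_R(R/\fp)$ are non-positive and rigidly controlled by the one-dimensional slices of $T$, while in the non-$\fp$-directions the units $u_i$ translate degrees freely. The technical delicacies are in (ii), where $r$ must be taken in $R$ (rather than in $R_{(\fp)}$) in order that $(\deg r)_{1,\ldots,m}$ lies in $\nn^m$, and in (v), where one must check that $(0:_{(\cdot)}\fp R_{(\fp)})$ commutes with shifts and that (iv) identifies this socle-like submodule precisely with $T$.
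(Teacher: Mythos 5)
Your proof is correct and follows essentially the same route as the paper's: every part rests on the same ingredients (Lemma \ref{mi.0r} and the units $u_i$ for (i), Remark \ref{mu.4a} plus $\mbox{\rm *}$essentiality for (ii), the graded-field structure of $T$ for (iii), and the socle identification passed through Lemma \ref{du.3b} for (iv) and (v)). The differences are cosmetic only: in (ii) you argue via a single homogeneous element where the paper notes that a whole graded submodule of $R/\fp$ vanishes, in (iii) you supply direct arguments for (b)--(d) where the paper cites \cite[\S 1.1]{GotWat78}, and in (iv) you invoke $\mbox{\rm *}$simplicity of $T$ as a graded $R_{(\fp)}$-module where the paper invokes freeness of graded $T$-modules -- both immediate consequences of (iii).
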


\begin{note} The obvious interpretation of the above statement is
to be made in the case where $m = r$.
\end{note}

\begin{proof} It will be convenient to write $\mathbf{v}$ for a general
member of $\Z^m$ and $\mathbf{w}$ for a general member of
$\Z^{r-m}$, and to use $\mathbf{v}|\mathbf{w}$ to indicate the
element of $\Z^r$ obtained by juxtaposition.

(i) By Lemma \ref{mi.0r}, for each $i = m+1, \ldots, r$,
multiplication by $u_i$ provides a $\Z^r$-homogeneous automorphism
of $\mbox{\rm *} E_R(R/\p)$ of degree $\mathbf{e}_i$; the claim
follows from this.

(ii) Set $\Delta := \left\{ \mathbf{v} \in \Z^m : v_i > 0 \mbox{~for
some~} i \in \{1, \ldots,m\}\right\}$. Since $R_{\mathbf{e}_i}
\subseteq \fp$ for all $i = 1, \ldots, m$, it follows from Lemma
\ref{mu.2} that the $\Z^r$-graded $R$-module $R/{\mathfrak{p}}$ has
$(R/{\mathfrak{p}})_{\mathbf{v}|\mathbf{w}} = 0$ for all choices of
$\mathbf{v}|\mathbf{w} \in \Z^r$ with $\mathbf{v} \in \Delta$.
Therefore the $\Z^r$-graded submodule $$
\bigoplus_{\stackrel{{\scriptstyle \mathbf{v} \in
\Delta}}{\mathbf{w} \in
\Z^{r-m}}}(R/{\mathfrak{p}})_{\mathbf{v}|\mathbf{w}} $$ of $R/\fp$
is zero. Since $\mbox{\rm *}E_R(R/{\mathfrak{p}})$ is a $\mbox{\rm
*}$essential extension of $R/{\mathfrak{p}}$, it follows that
\[
\bigoplus_{\stackrel{{\scriptstyle \mathbf{v} \in
\Delta}}{\mathbf{w} \in \Z^{r-m}}}(\mbox{\rm
*}E_R(R/{\mathfrak{p}}))_{\mathbf{v}|\mathbf{w}} = 0.
\]

(iii) By Remark \ref{mu.4a}, each $\nn^r$-homogeneous element of $R
\setminus \fp$ has degree $\mathbf{v}|\mathbf{w}$ with $\mathbf{v} =
\mathbf{0}$. Also, $(R/{\mathfrak{p}})_{\mathbf{v}|\mathbf{w}} = 0$
for all $\mathbf{v} \in \Z^{m}$ with $\mathbf{v} > \mathbf{0}$. Now
every non-zero $\Z^r$-homogeneous element of $T$ is a unit of $T$,
so that $T$ is a simple $\Z^r$-graded ring. Furthermore, the
subgroup
$$
G := \left\{ \mathbf{n} \in \Z^r : T_{\mathbf{n}} \mbox{~contains a
unit of~} T \right\}
$$
is equal to $\left\{ (n_1, \ldots, n_m, n_{m+1}, \ldots, n_r) \in
\Z^r: n_{1} = \cdots = n_m = 0 \right\}. $ The claims in parts (b),
(c) and (d) now follow from \cite[Lemma 1.1.2, Corollary 1.1.3 and
Theorem 1.1.4]{GotWat78}.

(iv) Recall that $T = R_{(\fp)}/{\mathfrak{p}}R_{(\fp)}$. Now the
$\Z^r$-graded $T$-module
 $(0:_{\mbox{\rm
*}E_{R_{(\fp)}}(R_{(\fp)}/{\mathfrak{p}}R_{(\fp)})} \p R_{(\fp)})$
contains its $\Z^r$-graded $T$-submodule
$R_{(\fp)}/{\mathfrak{p}}R_{(\fp)}$, and cannot be strictly larger,
by $\mbox{\rm *}$essentiality and the fact (see part (iii)) that
every $\Z^r$-graded $T$-module is free.

(v) By Lemma \ref{du.3b}(iv), there is a $\Z^r$-homogeneous isomorphism of
$\Z^r$-graded $R_{(\fp)}$-modules
$$
\left(\mbox{\rm
*}E_{R_{(\fp)}}(R_{(\fp)}/{\mathfrak{p}}R_{(\fp)})\right)(\mathbf{a}|\mathbf{c})
\cong \left(\mbox{\rm
*}E_{R_{(\fp)}}(R_{(\fp)}/{\mathfrak{p}}R_{(\fp)})\right)(\mathbf{b}|\mathbf{d}).
$$
Abbreviate $\mbox{\rm
*}E_{R_{(\fp)}}(R_{(\fp)}/{\mathfrak{p}}R_{(\fp)})$ by $F$. It
follows from part (iv) that
\begin{align*}
T(\mathbf{a}|\mathbf{c}) & = (0:_F \fp
R_{(\fp)})(\mathbf{a}|\mathbf{c}) =
(0:_{F(\mathbf{a}|\mathbf{c})} \fp R_{(\fp)})\\
& \cong (0:_{F(\mathbf{b}|\mathbf{d})} \fp R_{(\fp)}) = (0:_F \fp
R_{(\fp)})(\mathbf{b}|\mathbf{d})\\ & = T(\mathbf{b}|\mathbf{d}),
\end{align*}
where the isomorphism is $\Z^r$-homogeneous.
But, for $\mathbf{n} = (n_1, \ldots, n_m,n_{m+1}, \ldots, n_r) \in \Z^r$, we have
$$
T(\mathbf{a}|\mathbf{c})_{\mathbf{n}} \neq 0 \quad \mbox{~if and
only if~} \quad (n_{1}, \ldots, n_m) = -\mathbf{a}
$$
(by part (iii)). Therefore $\mathbf{a} = \mathbf{b}$.
\end{proof}

\begin{rmk}
\label{mu.5} Suppose that $R := \bigoplus_{\mathbf{n}\in \nn^r}
R_{\mathbf{n}}$ is positively graded and standard, and let $\fb$ be
an $\nn^r$-graded ideal of $R$ for which $\dir(\fb) \neq \emptyset$.

Write $\dir (\fb) = \{i_1, \ldots, i_{m}\}$, where $0 < m \leq r$
and $i_1 < \cdots < i_{m}$. Let $\phi(\fb) : \Z^r \lra \Z^{m}$ be
the epimorphism of Abelian groups defined by
$$
\phi(\fb)((n_1, \ldots, n_r)) = (n_{i_1}, \ldots, n_{i_{m}}) \quad
\mbox{~for all~} (n_1, \ldots, n_r) \in \Z^r.
$$
We can think of $\phi(\fb) : \Z^r \lra \Z^{m}$ as the homomorphism
which `forgets the co-ordinates in the non-$\fb$-directions'.

Now let $\fp \in \mbox{\rm *}\Var (R_{\mathbf{1}}R)$. The above
defines an Abelian group homomorphism $\phi(\fp) : \Z^r \lra
\Z^{\#\dir(\fp)}$. (For a finite set $Y$, the notation $\#Y$ denotes
the cardinality of the set $Y$.) In the case where $\fb \subseteq
\fp$, we have $\dir(\fb) \subseteq \dir(\fp)$, and we define the
Abelian group homomorphism $\phi(\fp;\fb) : \Z^{\#\dir(\fp)} \lra
\Z^{\#\dir(\fb)}$ to be the unique $\Z$-homomorphism such that
$\phi(\fp;\fb)\circ\phi(\fp) = \phi(\fb)$.

Now let $\fp \in \mbox{\rm *}\Var (R_{\mathbf{1}}R)$ and
$\#\dir(\fp) = m$; we use the notation of\/ {\rm \ref{nt.0}}. Let $T
:= R_{(\fp)}/\fp R_{(\fp)}$, and let $L$ be a $\Z^r$-graded
$T$-module.

\begin{enumerate}
\item By Proposition\/ {\rm \ref{mu.4}(iii)}, for each $\mathbf{a} \in \Z^m$ and
each $\mathbf{n} \in \Z^r$,
$$
(T(-\mathbf{n})^{\phi(\fp)})_{\mathbf{a}} = \begin{cases}
0 & \text{if $\phi(\fp)(\mathbf{n}) \neq \mathbf{a}$},\\
(T^{\phi(\fp)})_{\mathbf{0}} & \text{if $\phi(\fp)(\mathbf{n}) =
\mathbf{a}$}. \end{cases}
$$
In particular, the $\Z^{m}$-graded ring $T^{\phi(\fp)}$ is
concentrated in degree $\mathbf{0} \in \Z^{m}$.

\item Each component of the $\Z^{m}$-graded
$T^{\phi(\fp)}$-module $L^{\phi(\fp)}$ is a free
$(T^{\phi(\fp)})_{\mathbf{0}}$-submodule of $L^{\phi(\fp)}$.

\item If $L$ is finitely generated, then
$$
\rank_{T^{\phi(\fp)}}L^{\phi(\fp)} = \sum_{\mathbf{a} \in
\Z^{m}}\rank_{(T^{\phi(\fp)})_{\mathbf{0}}}
\big(L^{\phi(\fp)}\big)_{\mathbf{a}}\mbox{;}
$$
since the left-hand side of the above equation is finite,
all except finitely many of the terms on the right-hand side are zero.
\end{enumerate}
\end{rmk}

\begin{thm}\label{mi.1} Suppose that
$R := \bigoplus_{\mathbf{n}\in \nn^r} R_{\mathbf{n}}$
is positively graded and standard.
Let $M$ be a $\Z^r$-graded $R$-module, and let
\[
I^{\scriptscriptstyle \bullet} :
0
\longrightarrow \mbox{\rm *}E^0(M)
\stackrel{d^0}{\longrightarrow} \mbox{\rm *}E^1(M)
\longrightarrow \cdots \longrightarrow
\mbox{\rm *}E^i(M )\stackrel{d^i}{\longrightarrow} \mbox{\rm *}E^{i+1}(M)
\longrightarrow \cdots
\]
be the minimal $\mbox{\rm *}$injective resolution of $M$.
For each $i \in \nn$, let
$$
\theta_i : \mbox{\rm *}E^{i}(M) \stackrel{\cong}{\longrightarrow}
\bigoplus_{\alpha \in \Lambda_i}
\mbox{\rm *}E(R/{\mathfrak{p}}_{\alpha})(- \mathbf{n}_{\alpha})
$$
be a $\Z^r$-homogeneous isomorphism, where ${\mathfrak{p}}_{\alpha} \in
\mbox{\rm *}\Spec (R)$ and $\mathbf{n}_{\alpha} \in \Z^r$ for all
$\alpha \in \Lambda_i$.

Let $\fp \in \mbox{\rm *}\Var (R_{\mathbf{1}}R)$ and use the
notation $\phi(\fp) : \Z^r \lra \Z^{m}$ and $T := R_{(\fp)}/\fp
R_{(\fp)}$ of Remark\/ {\rm \ref{mu.5}}, where $m$ is the number of
$\fp$-directions.

Let $i \in \nn$ and let $\mathbf{a} \in \Z^{m}$.
Then the cardinality of the set $\left\{ \alpha \in \Lambda_i :
{\mathfrak{p}}_{\alpha} = {\mathfrak{p}} \mbox{~and~}
\phi(\fp)(\mathbf{n}_{\alpha}) = \mathbf{a} \right\}$ is equal to
$$
\rank_{(T^{\phi(\fp)})_{\mathbf{0}}} \left(\big(\big(\mbox{\rm
*}\Ext^i_{R_{({\mathfrak{p}})}} (R_{({\mathfrak{p}})}/\p
R_{({\mathfrak{p}})},M_{({\mathfrak{p}})})\big)
^{\phi(\fp)}\big)_{\mathbf{a}}\right).$$
\end{thm}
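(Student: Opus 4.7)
The plan is to establish the $\Z^r$-graded $R_{(\fp)}$-module isomorphism
\[
\mbox{\rm *}\Ext^i_{R_{(\fp)}}\bigl(R_{(\fp)}/\fp R_{(\fp)},\, M_{(\fp)}\bigr) \;\cong\; \bigoplus_{\alpha \in \Lambda_i,\; \fp_\alpha = \fp} T(-\mathbf{n}_\alpha),
\]
from which the claim drops out by applying the functor $(-)^{\phi(\fp)}$ and reading off the $\mathbf{a}$-th graded component. First I would localize the given minimal $\mbox{\rm *}$injective resolution $I^\bullet$ at $\fp$. By Lemma \ref{mi.0r}, any summand $\mbox{\rm *}E(R/\fp_\alpha)(-\mathbf{n}_\alpha)$ with $\fp_\alpha \not\subseteq \fp$ admits a $\Z^r$-homogeneous element of $\fp_\alpha \setminus \fp$ that is inverted in $R_{(\fp)}$, and so is killed by homogeneous localization; the surviving summands (those with $\fp_\alpha \subseteq \fp$) are identified by Lemma \ref{du.3b}(iv),(v) with $\mbox{\rm *}E_{R_{(\fp)}}(R_{(\fp)}/\fp_\alpha R_{(\fp)})(-\mathbf{n}_\alpha)$, still $\mbox{\rm *}$injective over $R_{(\fp)}$. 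Exactness of $(-)_{(\fp)}$ together with Lemma \ref{ld.31} then shows $I^\bullet_{(\fp)}$ is a minimal $\mbox{\rm *}$injective resolution of $M_{(\fp)}$, and I use it to compute Ext.

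Applying $\mbox{\rm *}\Hom_{R_{(\fp)}}\bigl(R_{(\fp)}/\fp R_{(\fp)},\, -\bigr)$ term by term: for a surviving summand with $\fp_\alpha \subsetneq \fp$, any $\Z^r$-homogeneous $s \in \fp \setminus \fp_\alpha$ acts as an automorphism on $\mbox{\rm *}E_{R_{(\fp)}}(R_{(\fp)}/\fp_\alpha R_{(\fp)})$ by Lemma \ref{mi.0r} applied over $R_{(\fp)}$, so the $\fp R_{(\fp)}$-annihilator in that summand vanishes; for $\fp_\alpha = \fp$, Proposition \ref{mu.4}(iv) identifies this annihilator with $T$, contributing $T(-\mathbf{n}_\alpha)$. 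Thus the $i$-th term of the resulting Hom complex is precisely $\bigoplus_{\alpha : \fp_\alpha = \fp} T(-\mathbf{n}_\alpha)$.

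The main obstacle is to show that every differential of this Hom complex vanishes, so that cohomology coincides with the complex itself. Let $x \in I^i_{(\fp)}$ be a $\Z^r$-homogeneous element annihilated by $\fp R_{(\fp)}$. The cyclic graded submodule $R_{(\fp)}x$ is then a graded quotient of $T = R_{(\fp)}/\fp R_{(\fp)}$; since $T$ is a simple $\Z^r$-graded ring by Proposition \ref{mu.4}(iii)(a), $R_{(\fp)}x$ has no proper nonzero $\Z^r$-graded submodule. Minimality provides that $\Ker d^i$ is a $\mbox{\rm *}$essential graded submodule of $I^i_{(\fp)}$, so $R_{(\fp)}x \cap \Ker d^i \neq 0$; the graded-simplicity just noted then forces $R_{(\fp)}x \subseteq \Ker d^i$, giving $d^i(x) = 0$. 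As $\mbox{\rm *}\Hom$ decomposes as a direct sum over graded degrees of such elements, every differential vanishes and the target isomorphism follows. Finally, applying the exact functor $(-)^{\phi(\fp)}$ and extracting the $\mathbf{a}$-th graded piece, Remark \ref{mu.5}(i) identifies each $\bigl(T(-\mathbf{n}_\alpha)^{\phi(\fp)}\bigr)_{\mathbf{a}}$ as $(T^{\phi(\fp)})_{\mathbf{0}}$ when $\phi(\fp)(\mathbf{n}_\alpha) = \mathbf{a}$ and as $0$ otherwise, so the rank over $(T^{\phi(\fp)})_{\mathbf{0}}$ of the $\mathbf{a}$-component equals the asserted cardinality.
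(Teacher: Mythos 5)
Your proposal is correct and follows essentially the same route as the paper: localize the minimal $\mbox{\rm *}$injective resolution at $\fp$, kill the summands with $\fp_\alpha \not\subseteq \fp$, identify $(0:\fp R_{(\fp)})$ in the surviving summands via Proposition \ref{mu.4}(iv), show the differentials of the resulting complex vanish, and finish by applying $(\cdot)^{\phi(\fp)}$ and reading off components via Remark \ref{mu.5}. The only micro-level variation is in the vanishing-of-differentials step, where you argue element-by-element from the graded simplicity of cyclic $T$-modules (Proposition \ref{mu.4}(iii)(a)) plus essentiality, while the paper invokes the equivalent fact that all $\Z^r$-graded $T$-modules are free (Proposition \ref{mu.4}(iii)(d)) so an essential submodule must be the whole thing; these are two faces of the same coin.
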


\begin{proof} By Lemmas \ref{mi.0r}, \ref{ld.31} and \ref{du.3b},
there are $\Z^r$-homogeneous isomorphisms of graded
$R_{(\fp)}$-modules
$$ \mbox{\rm *}E^{i}_{R_{(\fp)}}(M_{(\fp)}) \cong
\left(\mbox{\rm *}E^{i}_{R}(M)\right)_{(\fp)} \cong
\bigoplus_{\stackrel{\scriptstyle \alpha \in
\Lambda_i}{\fp_{\alpha} \subseteq \fp}}
\mbox{\rm *}E(R_{(\fp)}/{\mathfrak{p}}_{\alpha}R_{(\fp)})(-\mathbf{n}_{\alpha}).
$$

One can calculate $\mbox{\rm *}\Ext^i_{R_{({\mathfrak{p}})}}
(R_{({\mathfrak{p}})}/\p R_{({\mathfrak{p}})},M_{({\mathfrak{p}})})$
(up to isomorphism in the category $\mbox{\rm
*}\mathcal{C}^{\Z^r}(R_{(\fp)})$) by taking the $i$th cohomology
module of the complex $(0:_{(I^{\scriptscriptstyle
\bullet})_{(\fp)}} \fp R_{(\fp)}).$ Note that, by Lemma \ref{ld.31},
for each $j \in \nn$, the inclusion $\Ker (d^j_{(\fp)}) \subseteq
\mbox{\rm *}E^j(M)_{(\fp)}$ is $\mbox{\rm *}$essential, so that the
inclusion
$$\Ker (d^j_{(\fp)})\cap\left(0:_{\mbox{\rm *}E^j(M)_{(\fp)}} \fp
R_{(\fp)}\right) \subseteq \left(0:_{\mbox{\rm *}E^j(M)_{(\fp)}} \fp
R_{(\fp)}\right)$$ is also $\mbox{\rm *}$essential. Because, by
Proposition \ref{mu.4}(iii)(d), each $\Z^r$-graded $T$-module is
free, it follows that all the `differentiation' maps in the complex
$(0:_{(I^{\scriptscriptstyle \bullet})_{(\fp)}} \fp R_{(\fp)})$ are
zero. Hence
$$
\mbox{\rm *}\Ext^i_{R_{({\mathfrak{p}})}} (R_{({\mathfrak{p}})}/\p
R_{({\mathfrak{p}})},M_{({\mathfrak{p}})}) \cong
\bigoplus_{\stackrel{\scriptstyle \alpha \in
\Lambda_i}{\fp_{\alpha} \subseteq \fp}} \left(0: _{\mbox{\rm
*}E(R_{(\fp)}/{\mathfrak{p}}_{\alpha}R_{(\fp)})(-\mathbf{n}_{\alpha})}
\p R_{({\mathfrak{p}})} \right) \quad \mbox{~in~} \mbox{\rm
*}\mathcal{C}^{\Z^r}(R_{(\fp)}).
$$

For $\alpha \in \Lambda_i$ such that $\p_{\alpha}\subset \fp$ (the
symbol `$\subset$' is reserved to denote strict inclusion), there
exists an $\nn^r$-homogeneous element $u \in \fp \setminus
\p_{\alpha}$, and the fact (see Lemma \ref{mi.0r}) that
multiplication by $u/1 \in R_{(\fp)}$ provides an automorphism of
$\mbox{\rm
*}E(R_{(\fp)}/{\mathfrak{p}}_{\alpha}R_{(\fp)})$ ensures that
\[
\big(0:_{\mbox{\rm *}E(R_{(\fp)}/{\mathfrak{p}}_{\alpha}R_{(\fp)})
(-\mathbf{n}_{\alpha})} \fp R_{(\fp)}\big)
= 0.
\]
If $\alpha \in \Lambda_i$ is such that $\p_{\alpha} = \fp$, then, by
Proposition \ref{mu.4}(iv),
\[
(0:_{\mbox{\rm
*}E_{R_{(\fp)}}(R_{(\fp)}/{\mathfrak{p}}R_{(\fp)})(-\mathbf{n}_{\alpha})}
\p R_{(\fp)})
= \left(R_{(\fp)}/{\mathfrak{p}}R_{(\fp)}\right)(-\mathbf{n}_{\alpha})
\]
and, by Proposition \ref{mu.4}(iii)(d), this is a free $\Z^r$-graded
$T$-module.

Therefore there is a $\Z^r$-homogeneous isomorphism of $\Z^r$-graded
$T$-modules
$$
\mbox{\rm *}\Ext^i_{R_{({\mathfrak{p}})}}
(R_{({\mathfrak{p}})}/\p R_{({\mathfrak{p}})},M_{({\mathfrak{p}})}) \cong
\bigoplus_{\stackrel{\scriptstyle \alpha \in
\Lambda_i}{\fp_{\alpha} = \fp}}
\left(R_{(\fp)}/{\mathfrak{p}}R_{(\fp)}\right)(-\mathbf{n}_{\alpha}).
$$
Now apply the functor $(\: {\scriptscriptstyle \bullet}
\:)^{\phi(\fp)}$ to obtain a $\Z^m$-homogeneous isomorphism of
$\Z^m$-graded $T^{\phi(\fp)}$-modules
$$
\big(\mbox{\rm *}\Ext^i_{R_{({\mathfrak{p}})}}
(R_{({\mathfrak{p}})}/\p
R_{({\mathfrak{p}})},M_{({\mathfrak{p}})})\big) ^{\phi(\fp)} \cong
\bigoplus_{\stackrel{\scriptstyle \alpha \in \Lambda_i}{\fp_{\alpha}
= \fp}}
\left(\left(R_{(\fp)}/{\mathfrak{p}}R_{(\fp)}\right)(-\mathbf{n}_{\alpha})
\right)^{\phi(\fp)}.
$$
But, by Remark \ref{mu.5}(i), for an $\alpha \in \Lambda_i$,
$$
\big(\big(T(-\mathbf{n}_{\alpha})\big)^{\phi(\fp)}\big)_{\mathbf{a}}
=
\begin{cases} 0 & \text{if $\phi(\fp)(\mathbf{n}_{\alpha}) \neq \mathbf{a}$},\\
\big(T^{\phi(\fp)}\big)_{\mathbf{0}} & \text{if
$\phi(\fp)(\mathbf{n}_{\alpha}) = \mathbf{a}$}.
\end{cases}
$$
The desired result now follows from Remark \ref{mu.5}(iii)
\end{proof}

\begin{defs}\label{mi.2}
Let the situation and notation be as in Theorem \ref{mi.1}, so that,
in particular, $\fp \in \mbox{\rm *}\Var(R_{\mathbf{1}}R)$ and $m$
denotes the number of $\fp$-directions.

Let $i \in \nn$. We say that $\mathbf{a} \in \Z^m$ is an {\em $i$th
level anchor point of $\fp$ for $M$\/} if
$$\big(\big(\mbox{\rm *}\Ext^i_{R_{({\mathfrak{p}})}}
(R_{({\mathfrak{p}})}/\p
R_{({\mathfrak{p}})},M_{({\mathfrak{p}})})\big)
^{\phi(\fp)}\big)_{\mathbf{a}} \neq 0\mbox{;}$$ the set of all $i$th
level anchor points of $\fp$ for $M$ is denoted by $\anch^i(\fp,M)$;
also, we write
$$
\anch(\fp,M) = \bigcup_{j \in \nn}\anch^j(\fp,M),
$$ and refer to this as the set of {\em anchor points of $\fp$ for $M$}.

Thus $\anch^i(\fp,M)$ is the set of $m$-tuples
$\mathbf{a} \in \Z^m$ for which, when we decompose
$$\mbox{\rm *}E^{i}(M) \stackrel{\cong}{\longrightarrow}
\bigoplus_{\alpha \in \Lambda_i} \mbox{\rm
*}E(R/{\mathfrak{p}}_{\alpha})(-\mathbf{n}_{\alpha})$$ by means of
a $\Z^r$-homogeneous isomorphism, there exists $\alpha \in
\Lambda_i$ with $\fp_{\alpha} = \fp$ and
$\phi(\fp)(\mathbf{n}_{\alpha}) = \mathbf{a}$. Note that
$\anch^i(\fp,M) = \emptyset$ if $\mu^i(\fp,M) = 0$, and that, if $M$
is finitely generated, then $\anch^i(\fp,M)$ is a finite set, by
Remark \ref{mu.5}(iii).
\end{defs}

The details in our present multi-graded situation are more
complicated (and therefore more interesting!)\ than in the
singly-graded situation studied in \cite{70} because there might
exist a $\fp \in \mbox{\rm *}\Var(R_{\mathbf{1}}R)$ for which the
set of $\fp$-directions is a proper subset of $\{1, \ldots,r\}$.
This cannot happen when $r = 1$. It is worthwhile for us to draw
attention to the simplifications that occur in the above theory when
$\dir(\fp) = \{1, \ldots,r\}$, for that case provides a more-or-less
exact analogue of the anchor point theory for the singly-graded case
developed in \cite{70}.

\begin{ex}
\label{mu.6}
Suppose that
$R := \bigoplus_{\mathbf{n}\in \nn^r} R_{\mathbf{n}}$
is positively graded and standard.
Let $M$ be a $\Z^r$-graded $R$-module, and let
\[
I^{\scriptscriptstyle \bullet} :
0
\longrightarrow \mbox{\rm *}E^0(M)
\stackrel{d^0}{\longrightarrow} \mbox{\rm *}E^1(M)
\longrightarrow \cdots \longrightarrow
\mbox{\rm *}E^i(M )\stackrel{d^i}{\longrightarrow} \mbox{\rm *}E^{i+1}(M)
\longrightarrow \cdots
\]
be the minimal $\mbox{\rm *}$injective resolution of $M$.
For each $i \in \nn$, let
$$
\theta_i : \mbox{\rm *}E^{i}(M) \stackrel{\cong}{\longrightarrow}
\bigoplus_{\alpha \in \Lambda_i}
\mbox{\rm *}E(R/{\mathfrak{p}}_{\alpha})(- \mathbf{n}_{\alpha})
$$
be a $\Z^r$-homogeneous isomorphism, where ${\mathfrak{p}}_{\alpha} \in
\mbox{\rm *}\Spec (R)$ and $\mathbf{n}_{\alpha} \in \Z^r$ for all
$\alpha \in \Lambda_i$.

Let $\fp \in \mbox{\rm *}\Spec(R)$ be such that $\fp \supseteq
R_{\mathbf{n}}$ for all $\mathbf{n} > \mathbf{0}$, so that
$\dir(\fp) = \{1, \ldots,r\}$. In this case, $T:= R_{(\fp)}/\fp
R_{(\fp)}$ is concentrated in degree $\mathbf{0}$, and
$T_{\mathbf{0}}$ is a field isomorphic to
$k_{R_{\mathbf{0}}}(\fp_{\mathbf{0}})$.

Let $i \in \nn$. Then $\anch^i(\fp,M)$ is the
set of $r$-tuples $\mathbf{a} \in \Z^{r}$ for which
there exists $\alpha \in \Lambda_i$
with $\fp_{\alpha} = \fp$ and $\mathbf{n}_{\alpha} = \mathbf{a}$. The
cardinality of the set of such $\alpha$s is
$$\dim_{k_{R_{\mathbf{0}}}(\fp_{\mathbf{0}})}
\left(\big(\mbox{\rm *}\Ext^i_{R_{({\mathfrak{p}})}}
(R_{({\mathfrak{p}})}/\p
R_{({\mathfrak{p}})},M_{({\mathfrak{p}})})
\big)_{\mathbf{a}}\right),
$$
and we have
$$
\sum_{\mathbf{a} \in \Z^r}
\dim_{k_{R_{\mathbf{0}}}(\fp_{\mathbf{0}})}
\left(\big(\mbox{\rm *}\Ext^i_{R_{({\mathfrak{p}})}}
(R_{({\mathfrak{p}})}/\p R
_{({\mathfrak{p}})},M_{({\mathfrak{p}})})
\big)_{\mathbf{a}}\right) = \mu^i(\fp,M).
$$
In particular, if $M$ is finitely generated, then there are only
finitely many $i$th level anchor points of $\fp$ for $M$.

This reflects rather well the singly-graded anchor point theory
studied in \cite{70}.
\end{ex}

Our next aim is to extend (in a sense) the final result in Example
\ref{mu.6} (namely that, when $M$ (as in the example) is a finitely
generated $\Z^r$-graded $R$-module and $\fp \in \mbox{\rm
*}\Spec(R)$ is such that $\fp \supseteq R_{\mathbf{n}}$ for all
$\mathbf{n} > \mathbf{0}$, then, for each $i \in \nn$, there are
only finitely many $i$th level anchor points of $\fp$ for $M$) to
{\em all\/} $\nn^r$-graded primes of $R$ that contain
$R_{\mathbf{1}}$.

\begin{rmk}
\label{mu.7} Let $S$ be a multiplicatively closed set of $\Z^r$-homogeneous
elements of $R$, and let $M$, $N$ be $\Z^r$-graded $R$-modules with $M$
finitely generated.  Then, for each $i \in \nn$, there is a $\Z^r$-homogeneous
$S^{-1}R$-isomorphism
$$
S^{-1}\left(\mbox{\rm *}\Ext^i_R(M,N)\right) \cong \mbox{\rm
*}\Ext^i_{S^{-1}R}(S^{-1}M,S^{-1}N).
$$
\end{rmk}

\begin{thm}\label{mu.10}
Assume that $R = \bigoplus_{\mathbf{n} \in \nn^r} R_\mathbf{n}$ is
positively graded and standard, and let $M$ be a $\Z^r$-graded
$R$-module. Let $i \in \nn$, and let $\fp\in \mbox{\rm *}\Var
(R_{\mathbf{1}}R)$. Then
$$
\anch^i(\fp,M) = \anch^i(\fp^{\phi(\fp)},M^{\phi(\fp)}),
$$
and so is finite if $M$ is finitely generated.
\end{thm}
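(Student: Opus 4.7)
The plan is to identify the two Ext-modules whose graded components detect anchor points on each side, and then read off finiteness from Example \ref{mu.6}. First I would check that in the $\Z^m$-graded ring $R^{\phi(\fp)}$, every standard direction is a $\fp^{\phi(\fp)}$-direction, so that $\phi(\fp^{\phi(\fp)})\colon\Z^m\to\Z^m$ is the identity. Indeed, for each $k\in\{1,\dots,m\}$, any $\mathbf{n}\geq\mathbf{0}$ with $\phi(\fp)(\mathbf{n})=\mathbf{e}_k$ has $n_{i_k}=1$, whence by standardness $R_{\mathbf{n}}\subseteq R_{\mathbf{e}_{i_k}}R\subseteq\fp$. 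Consequently, Example \ref{mu.6} applied to $(R^{\phi(\fp)},M^{\phi(\fp)},\fp^{\phi(\fp)})$ (which is still positively graded and standard as a $\Z^m$-graded ring) identifies the right-hand side as
\[
\anch^i(\fp^{\phi(\fp)},M^{\phi(\fp)})=\bigl\{\mathbf{a}\in\Z^m:\bigl(\mbox{\rm *}\Ext^i_B(B/\fp B,(M^{\phi(\fp)})_{(\fp^{\phi(\fp)})})\bigr)_{\mathbf{a}}\neq 0\bigr\},
\]
where $B:=(R^{\phi(\fp)})_{(\fp^{\phi(\fp)})}$; the same example also gives finiteness of this set when $M$ is finitely generated, so the whole theorem reduces to showing this displayed set equals $\anch^i(\fp,M)$.

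The comparison would rest on two compatibilities. \emph{(a)}~For finitely generated $L\in\mbox{\rm *}\mathcal{C}^{\Z^r}(R)$ and arbitrary $N$, there is a natural $\Z^m$-homogeneous isomorphism $(\mbox{\rm *}\Ext^i_R(L,N))^{\phi(\fp)}\cong\mbox{\rm *}\Ext^i_{R^{\phi(\fp)}}(L^{\phi(\fp)},N^{\phi(\fp)})$; this I would get by taking a resolution of $L$ by finitely generated $\mbox{\rm *}$free $\Z^r$-graded modules (preserved by the exact functor $(\,\cdot\,)^{\phi(\fp)}$, which sends $R(-\mathbf{n})$ to $R^{\phi(\fp)}(-\phi(\fp)(\mathbf{n}))$) and reducing to $i=0$, where $\mbox{\rm *}\Hom_R(L,N)=\Hom_R(L,N)=\mbox{\rm *}\Hom_{R^{\phi(\fp)}}(L^{\phi(\fp)},N^{\phi(\fp)})$ by Reminders \ref{in.1}(ii), with matching $\Z^m$-gradings. \emph{(b)}~Remark \ref{mu.7} for Ext under homogeneous localization. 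Let $S$ (resp.\ $S'$) denote the multiplicatively closed set of $\Z^r$-homogeneous (resp.\ $\Z^m$-homogeneous) elements of $R\setminus\fp$; by Remark \ref{mu.4a} applied to the $\Z^r$-homogeneous components of any $s'\in S'$, every such $s'$ has $\Z^m$-degree $\mathbf{0}$, so $S\subseteq S'$ and the underlying ring of $B$ equals $(S')^{-1}R=\overline{S'}{}^{-1}A$, where $A:=(R_{(\fp)})^{\phi(\fp)}$ and $\overline{S'}$ is the image of $S'$ in $A$. Combining (a) (with $L=R_{(\fp)}/\fp R_{(\fp)}$, cyclic hence finitely generated over $R_{(\fp)}$) and (b) (localizing from $A$ to $B$) yields
\[
\mbox{\rm *}\Ext^i_B\bigl(B/\fp B,(M^{\phi(\fp)})_{(\fp^{\phi(\fp)})}\bigr)\cong \overline{S'}{}^{-1}\Bigl[\bigl(\mbox{\rm *}\Ext^i_{R_{(\fp)}}(R_{(\fp)}/\fp R_{(\fp)},M_{(\fp)})\bigr)^{\phi(\fp)}\Bigr].
\]

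It then remains to check that this further $\overline{S'}$-localization preserves non-vanishing of each $\Z^m$-graded component. Set $N:=\mbox{\rm *}\Ext^i_{R_{(\fp)}}(R_{(\fp)}/\fp R_{(\fp)},M_{(\fp)})$ and $T:=R_{(\fp)}/\fp R_{(\fp)}$; this is a $\Z^r$-graded $T$-module, hence $\mbox{\rm *}$free by Proposition \ref{mu.4}(iii)(d), and the decomposition $T(-\mathbf{n})^{\phi(\fp)}=T^{\phi(\fp)}(-\phi(\fp)(\mathbf{n}))$ shows that every $\Z^m$-graded component of $N^{\phi(\fp)}$ is a free module over $(T^{\phi(\fp)})_{\mathbf{0}}=T$. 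By Proposition \ref{mu.4}(iii)(c) the ring $T$ is a Laurent polynomial ring over the field $T_{\mathbf{0}}$, in particular an integral domain; since $S'\cap\fp=\emptyset$, the image of $\overline{S'}$ in $T$ lies in $T\setminus\{0\}$ and so acts by nonzerodivisors on each component of $N^{\phi(\fp)}$. Thus $N^{\phi(\fp)}_{\mathbf{a}}\hookrightarrow(\overline{S'}{}^{-1}N^{\phi(\fp)})_{\mathbf{a}}$ is injective for every $\mathbf{a}$, one side vanishes iff the other does, and the desired equality of anchor sets follows (with finiteness handed over by the first paragraph). I expect the main obstacle to be the bookkeeping for compatibility (a) and the identification $B=\overline{S'}{}^{-1}A$, both of which hinge on a careful description of how the regrading functor interacts with homogeneous localization.
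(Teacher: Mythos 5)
Your proof is correct, and it shares its scaffolding with the paper's: compatibility of $\mbox{\rm *}\Ext$ with the regrading functor $(\,{\scriptscriptstyle\bullet}\,)^{\phi(\fp)}$ (your (a)), Remark~\ref{mu.7} (your (b)), the observation that $\dir(\fp^{\phi(\fp)})=\{1,\dots,m\}$ so $\phi(\fp^{\phi(\fp)})$ is the identity, and the appeal to Example~\ref{mu.6} for finiteness. Where you diverge is the comparison of the two homogeneous localizations. Writing $E:=\mbox{\rm *}\Ext^i_R(R/\fp,M)$, the paper compares $\bigl((E_{(\fp)})^{\phi(\fp)}\bigr)_{\mathbf{a}}$ with $\bigl((E^{\phi(\fp)})_{(\fp^{\phi(\fp)})}\bigr)_{\mathbf{a}}$ by a direct element argument: from a nonvanishing component it extracts a $\Z^r$-homogeneous $e\in E$ not annihilated by any $\Z^r$-homogeneous element of $R\setminus\fp$, and then decomposes an arbitrary $\Z^m$-homogeneous element of $R^{\phi(\fp)}\setminus\fp^{\phi(\fp)}$ into $\Z^r$-homogeneous pieces, at least one of which lies outside $\fp$ and therefore contributes a nonzero $\Z^r$-homogeneous piece to the product. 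You instead identify $B:=(R^{\phi(\fp)})_{(\fp^{\phi(\fp)})}$ as the further localization $\overline{S'}{}^{-1}A$ of $A:=(R_{(\fp)})^{\phi(\fp)}$ and reduce the comparison to injectivity of the further localization on each $\Z^m$-component of $N^{\phi(\fp)}$, which you obtain from Proposition~\ref{mu.4}(iii): each component is a free module over the Laurent-polynomial domain $T=R_{(\fp)}/\fp R_{(\fp)}$, on which $\overline{S'}$ acts through nonzero (hence nonzerodivisor) elements. Both arguments ultimately rest on the same facts about $R_{(\fp)}$; the paper's is more elementary and avoids invoking the $T$-module structure theory of \ref{mu.4}(iii) at this point, while yours is more structural and makes the conceptual content transparent (further localization by nonzerodivisors preserves nonvanishing componentwise). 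One point you leave slightly implicit is that $\overline{S'}$ really maps into $T\setminus\{0\}$: for $s'\in S'$, some $\Z^r$-homogeneous component of $s'$ lies outside $\fp$ (since $\fp$ is $\Z^r$-graded) and hence is a unit of $R_{(\fp)}$, and as $\fp R_{(\fp)}$ is $\Z^r$-graded this forces $s'/1\notin\fp R_{(\fp)}$; this deserves a sentence.
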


\begin{proof} Suppose, for ease of notation, that $\dir (\fp) = \{1,\ldots,m\}$,
where $0 < m \leq r$. Note
that $\fp^{\phi(\fp)}$ is a $\Z^m$-graded prime ideal of the
$\Z^m$-graded ring $R^{\phi(\fp)}$, and that $\dir(\fp^{\phi(\fp)}) =
\{1, \ldots, m\}$ (by Lemma \ref{mu.2}).

Set $E := \mbox{\rm *}\Ext^i_{R}(R/\fp,M)$. Let $\mathbf{a} \in
\Z^m$. In view of \ref{mu.7}, the $m$-tuple $\mathbf{a}$ is an $i$th
level anchor point of $\fp$ for $M$ if and only if
$((E_{(\fp)})^{\phi(\fp)})_{\mathbf{a}} \neq 0$. Our initial task in
this proof is to show that this is the case if and only if
$$
\big((\mbox{\rm *}\Ext^i_{R^{\phi(\fp)}}(R^{\phi(\fp)}/\fp^{\phi(\fp)},
M^{\phi(\fp)}))_{(\fp^{\phi(\fp)})}\big)_{\mathbf{a}} \neq 0.
$$

Now the $\Z^r$-graded $R$-module $E$ can be constructed by
application of the functor $\mbox{\rm *}\Hom_R(\:
{\scriptscriptstyle \bullet} \:,M)$ to a (deleted) $\mbox{\rm
*}$free resolution of $R/\fp$ by finitely generated $\mbox{\rm
*}$free $\Z^r$-graded modules
in the category $\mbox{\rm
*}\mathcal{C}^{\Z^r} (R)$ and then taking cohomology of the resulting
complex. It follows that there is a $\Z^m$-homogeneous isomorphism of $\Z^m$-graded
$R^{\phi(\fp)}$-modules
$$
E^{\phi(\fp)} \cong \mbox{\rm *}\Ext^i_{R^{\phi(\fp)}}(R^{\phi(\fp)}/
\fp^{\phi(\fp)}, M^{\phi(\fp)}).
$$

Suppose that $((E_{(\fp)})^{\phi(\fp)})_{\mathbf{a}} \neq 0$. Thus
there exists $\mathbf{n} \in \Z^r$ such that $\phi(\fp)(\mathbf{n})
= \mathbf{a}$ and $\xi \in (E_{(\fp)})_{\mathbf{n}}$ such that $\xi
\neq 0$. By Remark \ref{mu.4a}, there exists $\mathbf{n}' \in \Z^r$
such that $\phi(\fp)(\mathbf{n}') = \mathbf{a}$ and $e \in
E_{\mathbf{n}'}$ which is not annihilated by any $\Z^r$-homogeneous
element of $R \setminus \fp$. Now any $\Z^m$-homogeneous element of
$R^{\phi(\fp)} \setminus \fp^{\phi(\fp)}$ will, when written as a
sum of $\Z^r$-homogeneous elements of $R$, have at least one
component outside $\fp$, and so $0 \neq e/1 \in
(E^{\phi(\fp)})_{(\fp^{\phi(\fp)})}$. Hence
$\big((E^{\phi(\fp)})_{(\fp^{\phi(\fp)})}\big)_{\mathbf{a}} \neq 0$,
so that
$$
\big((\mbox{\rm *}\Ext^i_{R^{\phi(\fp)}}(R^{\phi(\fp)}/\fp^{\phi(\fp)},
M^{\phi(\fp)}))_{(\fp^{\phi(\fp)})}\big)_{\mathbf{a}} \neq 0.
$$

Now suppose that $ \big((\mbox{\rm
*}\Ext^i_{R^{\phi(\fp)}}(R^{\phi(\fp)}/\fp^{\phi(\fp)},
M^{\phi(\fp)}))_{(\fp^{\phi(\fp)})}\big)_{\mathbf{a}} \neq 0.
$
Then $\big((E^{\phi(\fp)})_{(\fp^{\phi(\fp)})}\big)_{\mathbf{a}} \neq 0$. Since
every $\Z^m$-homogeneous element of $R^{\phi(\fp)} \setminus \fp^{\phi(\fp)}$
has degree $\mathbf{0} \in \Z^m$, it follows that there exists
$e \in (E^{\phi(\fp)})_{\mathbf{a}}$
that is not annihilated by any
$\Z^m$-homogeneous element of $R^{\phi(\fp)} \setminus \fp^{\phi(\fp)}$.
In particular, $e$ is not annihilated by any
$\Z^r$-homogeneous element of $R \setminus \fp$.
Therefore $0 \neq e/1 \in ((E_{(\fp)})^{\phi(\fp)})_{\mathbf{a}}$.

This proves that $\anch^i(\fp,M) = \anch^i(\fp^{\phi(\fp)},M^{\phi(\fp)})$.
Finally, since $\dir(\fp^{\phi(\fp)}) =
\{1, \ldots, m\}$, it follows from Example \ref{mu.6} that
$\anch^i(\fp^{\phi(\fp)},M^{\phi(\fp)})$ is finite when $M$ is finitely generated.
\end{proof}

The aim of the remainder of this section is to establish a multi-graded
analogue of a result of Bass \cite[Lemma 3.1]{Bass63}. However, there are
some subtleties which mean that our generalization of \cite[Lemma 1.8]{70}
is not completely straightforward.

\begin{thm}\label{mi.3}
Assume that $R = \bigoplus_{\mathbf{n} \in \nn^r} R_\mathbf{n}$ is
positively graded and standard, and let $M$ be a finitely generated
$\Z^r$-graded $R$-module. Let $\fp, \fq \in \mbox{\rm *}\Spec (R)$
be such that $R_{\mathbf{1}}R \subseteq \fp \subset \fq$ (we reserve
the symbol `$\subset$' to denote strict inclusion) and that there is
no $\Z^r$-graded prime ideal strictly between $\fp$ and $\fq$. Note
that $\dir(\fp) \subseteq \dir(\fq)$: suppose, for ease of notation,
that $\dir(\fp) = \{1,\ldots,m\}$ and $\dir(\fq) = \{1,\ldots,m,m+1,
\ldots, h\}$, where $0 < m \leq h \leq r$.

Let $i \in \nn$. Then, for each $\mathbf{a} = (a_1, \ldots, a_m)
\in \anch^i(\fp,M)$, there exists $$\mathbf{b} = (b_1, \ldots,
b_m,b_{m+1}, \ldots, b_h) \in \anch^{i+1}(\fq,M)$$ such that
$
(b_1, \ldots, b_m) = (a_1, \ldots, a_m) = \mathbf{a}.
$
\end{thm}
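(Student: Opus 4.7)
My plan is to adapt Bass's classical argument \cite[Lemma~3.1]{Bass63} to the multi-graded setting, carefully tracking the first $m$ coordinates of degrees. First I would replace $R$ by its $\Z^r$-homogeneous localization $R_{(\fq)}$: by Lemma~\ref{du.3b}(iv) (applied to the minimal $\mbox{\rm *}$injective resolution of $M$) and Remark~\ref{mu.7}, both $\anch^i(\fp,M)$ and $\anch^{i+1}(\fq,M)$ are unchanged, and $R$ becomes $\mbox{\rm *}$local with unique $\mbox{\rm *}$maximal ideal $\fq$. Pick an $\nn^r$-homogeneous $x\in\fq\setminus\fp$ and set $\mathbf{d}:=\deg x$; by Remark~\ref{mu.4a}, $d_1=\cdots=d_m=0$, the decisive feature preserving first-$m$-coord information under multiplication by $x$. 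In this $\mbox{\rm *}$local setting the minimal primes of the graded ideal $\fp+Rx$ are graded, strictly contain $\fp$, and (by the ``no graded prime strictly between'' hypothesis) all equal $\fq$; hence $\sqrt{\fp+Rx}=\fq$ and some power $\fq^\ell$ annihilates $C:=R/(\fp+Rx)$.

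The short exact sequence $0\to(R/\fp)(-\mathbf{d})\xrightarrow{\,x\,}R/\fp\to C\to 0$ in $\mbox{\rm *}\mathcal{C}^{\Z^r}(R)$ yields, via the long exact sequence for $\mbox{\rm *}\Ext$, a $\Z^r$-homogeneous injection $E^i(\mathbf{d})/xE^i\hookrightarrow\mbox{\rm *}\Ext^{i+1}_R(C,M)$, where $E^i:=\mbox{\rm *}\Ext^i_R(R/\fp,M)$. Using $\mathbf{a}\in\anch^i(\fp,M)$ and Remark~\ref{mu.7}, I choose $\mathbf{n}\in\Z^r$ with $\phi(\fp)(\mathbf{n})=\mathbf{a}$ and $e\in(E^i_{(\fq)})_\mathbf{n}$ whose image in $E^i_{(\fp)}$ is nonzero. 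By the graded Krull intersection theorem applied to the finitely generated graded module $E^i_{(\fq)}$ over the $\mbox{\rm *}$local ring $R_{(\fq)}$, $\bigcap_k x^kE^i_{(\fq)}=0$; taking $k$ maximal with $e\in x^kE^i_{(\fq)}$ and writing $e=x^ke'$ produces $e'\in E^i_{(\fq),\,\mathbf{n}-k\mathbf{d}}$ which is nonzero modulo $xE^i_{(\fq)}$ and whose degree still has first $m$ coordinates $\mathbf{a}$ (since $d_1=\cdots=d_m=0$). Its image $[e']$ is a nonzero element of $\mbox{\rm *}\Ext^{i+1}_R(C,M)_{(\fq)}$ in some degree $\mathbf{n}^{*}$ with $(n^{*}_1,\ldots,n^{*}_m)=(a_1,\ldots,a_m)$.

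Now observe that $\mbox{\rm *}\Ext^{i+1}_R(C,M)$ is annihilated by $\fp$, so by Lemma~\ref{mu.2} together with standardness, $R_\mathbf{v}$ acts as zero whenever $\mathbf{v}\in\nn^r$ has some $v_j>0$ for $j\leq m$. Consequently the $R$-action preserves the first $m$ coordinates of degree, and the whole Ext module decomposes into ``$\mathbf{a}'$-layers'' indexed by $\mathbf{a}'\in\Z^m$; the same analysis applies to every module arising below, and the long exact sequences to be used restrict to each layer. Working inside the $\mathbf{a}$-layer, I iterate the long exact sequences coming from $0\to\fq^{j+1}C\to\fq^jC\to\fq^jC/\fq^{j+1}C\to 0$: since $\fq^\ell C=0$, this reduces the non-vanishing at $\mathbf{n}^{*}$ to $\mbox{\rm *}\Ext^{i+1}_R(N,M)_{(\fq),\mathbf{n}^{*}}\neq 0$ for some $N=\fq^jC/\fq^{j+1}C$, a finitely generated graded $R/\fq$-module. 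Because $C$ is generated in degree $\mathbf{0}$ and its higher-first-$m$-coord components all lie in $\fp$, $C$ (and hence $N$) is concentrated in first $m$ coordinates $\mathbf{0}$.

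To finish, take a graded prime filtration $0=N_0\subset N_1\subset\cdots\subset N_s=N$: $\mbox{\rm *}$maximality of $\fq$ forces every subquotient to be isomorphic to $(R/\fq)(-\mathbf{t}_j)$, and concentration of $N$ in first $m$ coordinates $\mathbf{0}$ forces $(t_{j,1},\ldots,t_{j,m})=\mathbf{0}$ for every $j$. One more iteration of the long exact sequence inside the $\mathbf{a}$-layer produces a nonzero element of $\mbox{\rm *}\Ext^{i+1}_R(R/\fq,M)_{(\fq),\mathbf{s}}$ for some $\mathbf{s}=\mathbf{n}^{*}+\mathbf{t}_j$ whose first $m$ coordinates are still $\mathbf{a}$. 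Theorem~\ref{mi.1} then delivers the desired $\mathbf{b}:=\phi(\fq)(\mathbf{s})\in\anch^{i+1}(\fq,M)$ with $(b_1,\ldots,b_m)=(a_1,\ldots,a_m)$. The main obstacle will be the bookkeeping for the ``$\mathbf{a}$-layer'' decomposition: verifying at each step that the modules in play are genuinely annihilated by enough of $\fp$ that $R$-actions preserve the first $m$ coordinates of degree, so that the long exact sequences cleanly restrict to each layer.
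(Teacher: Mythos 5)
Your proof is correct and follows essentially the same Bass-style argument as the paper's own proof: choose a homogeneous $x\in\fq\setminus\fp$ whose degree has vanishing first $m$ coordinates, use the graded Krull intersection theorem over $R_{(\fq)}$ to obtain non-vanishing of $\mbox{\rm *}\Ext^{i+1}$ of the $\fq R_{(\fq)}$-primary module $R_{(\fq)}/(\fp R_{(\fq)}+xR_{(\fq)})$ in a degree whose first $m$ coordinates are $\mathbf{a}$, and then pass to $R/\fq$ via a filtration by shifts of $R_{(\fq)}/\fq R_{(\fq)}$ concentrated in the non-$\fp$-directions. The only cosmetic differences are that the paper uses a single chain of graded ideals from $\fq R_{(\fq)}$ to $\fp R_{(\fq)}+bR_{(\fq)}$ in place of your two-step filtration (powers of $\fq$, then a graded prime filtration), and that your ``$\mathbf{a}$-layer'' bookkeeping is redundant, since the degree-zero homogeneity of the connecting maps already forces the long exact sequences to preserve every coordinate of degree.
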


\begin{proof} There exists an $\nn^r$-homogeneous element $b \in \fq \setminus \fp$.
By Remark \ref{mu.4a}, each $\nn^r$-homogeneous element of $R
\setminus \fp$ has degree with first $m$ components $0$. In
particular, $\deg (b) = \mathbf{0}|\mathbf{v} \in \Z^m \times
\Z^{r-m}$ for some $\mathbf{v} \in \Z^{r-m}$.

Since $\mathbf{a} \in \anch^i(\fp,M)$, there exists $\mathbf{w} \in
\Z^{r-m}$ such that
$
\big(\mbox{\rm *}\Ext^i_{R_{({\mathfrak{p}})}}
(R_{({\mathfrak{p}})}/\p
R_{({\mathfrak{p}})},M_{({\mathfrak{p}})})\big)_ {\mathbf{a}|
\mathbf{w}} \neq 0.
$
Set $E:= \mbox{\rm *}\Ext^i_{R}(R/\fp,M)$. In view of Remark
\ref{mu.7}, we must have $(E_{(\fp)})_{\mathbf{a}| \mathbf{w}} \neq
0$. Since each $\nn^r$-homogeneous element of $R \setminus \fp$ has
degree with first $m$ components $0$, this means that there exists a
homogeneous element $e \in E$, with $\deg(e) = \mathbf{a}|
\mathbf{w}'$ for some $\mathbf{w}' \in \Z^{r-m}$, that is not
annihilated by any $\nn^r$-homogeneous element of $R \setminus \fp$.
But $R \setminus \fq \subseteq R \setminus \fp$, and so it follows
that $(E_{(\fq)})_{\mathbf{a}| \mathbf{w}'} \neq 0$. By Remark
\ref{mu.7} again,
$
\big(\mbox{\rm *}\Ext^i_{R_{({\mathfrak{q}})}}
(R_{({\mathfrak{q}})}/\p
R_{({\mathfrak{q}})},M_{({\mathfrak{q}})})\big)_ {\mathbf{a}|
\mathbf{w}'} \neq 0.
$
Write $F := \mbox{\rm *}\Ext^i_{R_{({\mathfrak{q}})}}
(R_{({\mathfrak{q}})}/\p R_{({\mathfrak{q}})},M_{({\mathfrak{q}})})$.

There is an exact sequence
$$
0 \lra \left(R_{(\fq)}/\fp
R_{(\fq)}\right)(-(\mathbf{0}|\mathbf{v})) \stackrel{b/1}{\lra}
R_{(\fq)}/\fp R_{(\fq)} \lra R_{(\fq)}/\left(\fp
R_{(\fq)}+(b/1)R_{(\fq)}\right) \lra 0
$$
in $\mbox{\rm *}\mathcal{C}^{\Z^r} (R_{(\fq)})$, and this induces an exact sequence
$$
F \stackrel{b/1}{\lra} F(\mathbf{0}|\mathbf{v}) \lra \mbox{\rm
*}\Ext^{i+1}_{R_{({\mathfrak{q}})}} (R_{({\mathfrak{q}})}/(\p
R_{({\mathfrak{q}})} + (b/1)R_{({\mathfrak{q}})} ),
M_{({\mathfrak{q}})}).
$$
Recall that $\deg (b) = \mathbf{0}|\mathbf{v}$. We claim that there
exists $\mathbf{y} \in \Z^{r-m}$ such that $F_{\mathbf{a}|
\mathbf{y}} \neq (b/1)F_{\mathbf{a}| \mathbf{y}-\mathbf{v}}$. To see
this, note that $b/1 \in \fq R_{({\mathfrak{q}})}$, the unique
$\mbox{\rm *}$maximal ideal of the homogeneous localization
$R_{({\mathfrak{q}})}$, and if we had $F_{\mathbf{a}| \mathbf{y}} =
(b/1)F_{\mathbf{a}| \mathbf{y}-\mathbf{v}}$ for every $\mathbf{y}
\in \Z^{r-m}$, then we should have $F_{\mathbf{a}| \mathbf{w}'}
\subseteq \bigcap_{n \in \N} (b/1)^nF$, which is zero by the
multi-graded version of Krull's Intersection Theorem. (One can show
that $G := \bigcap_{n \in \N} (b/1)^nF$ satisfies $G = (b/1)G$, and
then use the multi-graded version of Nakayama's Lemma.) Thus there
exists $\mathbf{y} \in \Z^{r-m}$ such that $F_{\mathbf{a}|
\mathbf{y}} \neq (b/1)F_{\mathbf{a}| \mathbf{y}-\mathbf{v}}$, and
therefore, in view of the last exact sequence,
$$
\left(\mbox{\rm *}\Ext^{i+1}_{R_{({\mathfrak{q}})}}
(R_{({\mathfrak{q}})}/(\p R_{({\mathfrak{q}})} +
(b/1)R_{({\mathfrak{q}})} ),
M_{({\mathfrak{q}})})\right)_{\mathbf{a}| \mathbf{y}} \neq 0.
$$

Now $R_{({\mathfrak{q}})}/\left(\p R_{({\mathfrak{q}})} +
(b/1)R_{({\mathfrak{q}})} \right)$ is concentrated in
$\Z^r$-degrees whose first $m$ components are all zero. Therefore
all its $\Z^r$-graded $R$-homomorphic images and all its
$\Z^r$-graded submodules are also concentrated in $\Z^r$-degrees
whose first $m$ components are all zero.

The only $\Z^r$-graded prime ideal of $R_{({\mathfrak{q}})}$ that
contains the ideal $\p R_{({\mathfrak{q}})} +
(b/1)R_{({\mathfrak{q}})}$ is $\fq R_{({\mathfrak{q}})}$, and so $\p
R_{({\mathfrak{q}})} + (b/1)R_{({\mathfrak{q}})}$ is $\fq
R_{({\mathfrak{q}})}$-primary. It follows that there is a chain of
$\Z^r$-graded ideals of $R_{({\mathfrak{q}})}$ from $\fq
R_{({\mathfrak{q}})}$ to $\p R_{({\mathfrak{q}})} +
(b/1)R_{({\mathfrak{q}})}$ with the property that each subquotient
is $R_{({\mathfrak{q}})}$-isomorphic to $\left(R_{({\mathfrak{q}})}/
\fq R_{({\mathfrak{q}})}\right)(\mathbf{0}|\mathbf{z})$ for some
$\mathbf{z} \in \Z^{r-m}$. It therefore follows from the
half-exactness of $\mbox{\rm *}\Ext^{i+1}_{R_{({\mathfrak{q}})}}$
that there exists $\mathbf{y}' \in \Z^{r-m}$ such that
$$
\big(\mbox{\rm *}\Ext^{i+1}_{R_{({\mathfrak{q}})}}
(R_{({\mathfrak{q}})}/\q
R_{({\mathfrak{q}})},M_{({\mathfrak{q}})})\big)_ {\mathbf{a}|
\mathbf{y}'} \neq 0.
$$
The claim then follows from Theorem \ref{mi.1}.
\end{proof}

\begin{cor}
\label{mu.8} Assume that $R = \bigoplus_{\mathbf{n} \in \nn^r}
R_\mathbf{n}$ is positively graded and standard, and let $M$ be a
finitely generated $\Z^r$-graded $R$-module. Let $\fp \in \mbox{\rm
*}\Var (R_{\mathbf{1}}R)$, and suppose, for ease of notation, that
$\dir(\fp) = \{1,\ldots,m\}$.

Let $\mathbf{a} \in \anch(\fp, M)$. Then there exists $\fq \in \mbox{\rm *}\Spec (R)$
such that $\fq \supseteq R_{\mathbf{n}}$ for all $\mathbf{n} \in \nn^r$ with
$\mathbf{n} > \mathbf{0}$ and $\mathbf{b} = (b_1, \ldots, b_m,b_{m+1},\ldots,b_r)
\in \anch(\fq, M)$ such that
$
\mathbf{a} = (b_1, \ldots, b_m).
$
\end{cor}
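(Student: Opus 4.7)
The strategy is to lift the anchor point $\mathbf{a}$ along a saturated chain of $\Z^r$-graded prime ideals, starting at $\fp$ and ending at a prime $\fq$ with $\dir(\fq) = \{1,\ldots,r\}$, applying Theorem \ref{mi.3} at each step. Note that the condition ``$\fq \supseteq R_{\mathbf{n}}$ for all $\mathbf{n} > \mathbf{0}$'' demanded by the corollary is, by Lemma \ref{mu.2}, equivalent to $\dir(\fq) = \{1,\ldots,r\}$.

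First I would construct such a $\fq$. Consider the $\nn^r$-graded ideal $\fa := \fp + \sum_{i \notin \dir(\fp)} R_{\mathbf{e}_i}R$. Since $R$ is positively graded, each summand $R_{\mathbf{e}_i}R$ has zero component in degree $\mathbf{0}$, so $\fa_{\mathbf{0}} = \fp_{\mathbf{0}}$; as $\fp$ is proper, $1 \notin \fa$, and hence $\fa$ is a proper ideal of $R$. Let $\fq$ be any minimal $\Z^r$-graded prime of $\fa$. Then $\fp \subseteq \fq$ and $R_{\mathbf{e}_i} \subseteq \fq$ for every $i \in \{1,\ldots,r\}$, so $\dir(\fq) = \{1,\ldots,r\}$. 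By Noetherianity of $R$, any chain of $\Z^r$-graded primes between $\fp$ and $\fq$ has length bounded by $\dim R_{\fq}/\fp R_{\fq}$, so we may refine $\fp \subseteq \fq$ to a (finite) saturated chain
$$\fp = \fp_0 \subset \fp_1 \subset \cdots \subset \fp_k = \fq$$
of $\Z^r$-graded primes. Each $\fp_j$ contains $R_{\mathbf{1}}R$, and $\dir(\fp_0) \subseteq \dir(\fp_1) \subseteq \cdots \subseteq \dir(\fp_k) = \{1,\ldots,r\}$.

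Fix $i_0 \in \nn$ with $\mathbf{a} \in \anch^{i_0}(\fp, M)$. I would then iterate: assume inductively that we have constructed $\mathbf{b}^{(j)} \in \anch^{i_0 + j}(\fp_j, M) \subseteq \Z^{\#\dir(\fp_j)}$ whose components indexed by $\dir(\fp)$ equal the corresponding components of $\mathbf{a}$. Applying Theorem \ref{mi.3} to the pair $\fp_j \subset \fp_{j+1}$ (after any required reindexing to match the theorem's notational convention), we obtain $\mathbf{b}^{(j+1)} \in \anch^{i_0 + j + 1}(\fp_{j+1}, M)$ whose components indexed by $\dir(\fp_j)$ agree with $\mathbf{b}^{(j)}$; in particular, the components indexed by $\dir(\fp) \subseteq \dir(\fp_j)$ still equal those of $\mathbf{a}$. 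After $k$ steps we obtain $\mathbf{b} := \mathbf{b}^{(k)} \in \anch^{i_0 + k}(\fq, M) \subseteq \anch(\fq, M) \subseteq \Z^r$, and its first $m$ components (those indexed by $\dir(\fp) = \{1,\ldots,m\}$) form $\mathbf{a}$, yielding the corollary.

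The only genuine work is the construction of $\fa$ together with the saturated chain; the inductive lifting is an exact repeated application of Theorem \ref{mi.3}. The main obstacle I expect is purely notational: Theorem \ref{mi.3} is stated under the convention $\dir(\fp) = \{1,\ldots,m\}$ and $\dir(\fq) = \{1,\ldots,h\}$, whereas along a generic chain the direction sets grow by inserting indices that are not necessarily consecutive, so one must verify that the lifting statement really concerns the components indexed by $\dir(\fp_j)$ and is invariant under permutation of the non-$\fp_j$-directions.
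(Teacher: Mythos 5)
Your proposal is correct and essentially reproduces the paper's argument: take a saturated chain of $\Z^r$-graded primes from $\fp$ up to a graded prime $\fq$ with $\dir(\fq)=\{1,\ldots,r\}$, and lift the anchor point step by step by repeated application of Theorem \ref{mi.3}. The only (harmless) variation is in how you produce $\fq$: you take a minimal graded prime of $\fp + \sum_{i\notin\dir(\fp)}R_{\mathbf{e}_i}R$ and then refine, whereas the paper runs a saturated chain up to an arbitrary $\mbox{\rm *}$maximal ideal and deduces $\fq\supseteq R_{\mathbf{n}}$ for all $\mathbf{n}>\mathbf{0}$ directly from $\mbox{\rm *}$maximality (by comparison with $(\fq\cap R_{\mathbf{0}})\oplus\bigoplus_{\mathbf{n}>\mathbf{0}}R_{\mathbf{n}}$).
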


\begin{proof} There exists a saturated chain $\fp = \fp_0 \subset \fp_1 \subset
\cdots \subset \fp_t = \fq$ of $\Z^r$-graded prime ideals of $R$ such that
$\fq$ is $\mbox{\rm *}$maximal. Since $\fq$ is contained in the
$\Z^r$-graded prime ideal
$$
(\fq \cap R_{\mathbf{0}}) \bigoplus \bigoplus_{\mathbf{n} > \mathbf{0}}
R_{\mathbf{n}},
$$
these two $\Z^r$-graded prime ideals must be the same; we therefore see that
$\fq \supseteq R_{\mathbf{n}}$ for all $\mathbf{n} \in \nn^r$ with
$\mathbf{n} > \mathbf{0}$. The claim is now immediate from Theorem \ref{mi.3}.
\end{proof}

\section{\sc The ends of certain multi-graded local cohomology modules}
\label{el}

We begin with a combinatorial lemma.

\begin{lem}
\label{el.1} Let $\mathbf{a} := (a_1, \ldots,a_r) \in \Z^r$ and let
$\Sigma$ be a non-empty subset of\/ $\Z^r$ such that $\mathbf{n}
\leq \mathbf{a}$ for all $\mathbf{n} \in \Sigma$. Then $\Sigma$ has
only finitely many maximal elements.
\end{lem}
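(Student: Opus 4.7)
\medskip
\noindent\textbf{Proof proposal.} The plan is to reduce the statement to the classical Dickson Lemma, which asserts that every subset of $\nn^r$ has only finitely many minimal elements with respect to the componentwise partial order. The translation from maxima bounded above to minima in $\nn^r$ is immediate, and the core combinatorial fact is then proved by induction on $r$.

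First I would translate: define
$$
\Sigma' := \{\mathbf{a} - \mathbf{n} : \mathbf{n} \in \Sigma\}.
$$
Since $\mathbf{n} \leq \mathbf{a}$ for every $\mathbf{n} \in \Sigma$, we have $\Sigma' \subseteq \nn^r$, and the map $\mathbf{n} \mapsto \mathbf{a}-\mathbf{n}$ is an order-reversing bijection between $\Sigma$ and $\Sigma'$. Hence the maximal elements of $\Sigma$ correspond bijectively to the minimal elements of $\Sigma'$, and it suffices to prove that every non-empty subset $\Sigma' \subseteq \nn^r$ has only finitely many minimal elements.

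Next I would prove this by induction on $r$. The base case $r = 1$ is immediate, since $\nn$ is well-ordered and any non-empty subset has a unique minimum. For the inductive step, assume the result for $r-1$ and pick any element $\mathbf{s} = (s_1,\ldots,s_r) \in \Sigma'$. Every minimal element $\mathbf{m}$ of $\Sigma'$ satisfies $\mathbf{m} \leq \mathbf{s}$, or else $\mathbf{m}$ lies in one of the ``slabs''
$$
\Sigma'_{i,k} := \{\mathbf{n} \in \Sigma' : n_i = k\}, \quad i \in \{1,\ldots,r\},\; k \in \{0,1,\ldots,s_i-1\}.
$$
Indeed, if $\mathbf{m} \not\leq \mathbf{s}$ and $\mathbf{m} \not\geq \mathbf{s}$ (the latter would force $\mathbf{m} = \mathbf{s}$ by minimality), then some coordinate $m_i$ is strictly smaller than $s_i$, placing $\mathbf{m}$ in $\Sigma'_{i,m_i}$. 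Each slab $\Sigma'_{i,k}$ is, via the projection that forgets the $i$th coordinate, in order-preserving bijection with a subset of $\nn^{r-1}$, so by the inductive hypothesis it has only finitely many minimal elements. Every minimal element of $\Sigma'$ lying in $\Sigma'_{i,k}$ must a fortiori be minimal in $\Sigma'_{i,k}$. Finally, the set $\{\mathbf{n} \in \Sigma' : \mathbf{n} \leq \mathbf{s}\}$ is a subset of the finite box $\prod_{i=1}^{r}\{0,1,\ldots,s_i\}$, hence contributes only finitely many minimal elements. Taking the union over the $\sum_{i=1}^r s_i$ slabs together with this finite box gives a finite superset of the minimal elements of $\Sigma'$, completing the induction.

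There is no real obstacle here; the only thing to watch is the bookkeeping in the inductive step, namely to make sure that every potential minimal element is actually captured either inside the finite box below $\mathbf{s}$ or inside one of the finitely many lower-dimensional slabs, so that the inductive hypothesis applies cleanly.
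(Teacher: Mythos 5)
Your proof is correct, and it shares the paper's initial translation step exactly: both arguments pass to $\Delta := \mathbf{a} - \Sigma \subseteq \nn^r$ and reduce to showing that a non-empty subset of $\nn^r$ has finitely many minimal elements (Dickson's Lemma). Where you diverge is in how that reduced statement is established. The paper invokes the fact that $\nn^r$ is a Noetherian monoid (citing Kreuzer--Robbiano), so that the monoideal generated by $\Delta$ is generated by finitely many $\mathbf{m}^{(1)}, \ldots, \mathbf{m}^{(s)} \in \Delta$, and then any minimal element of $\Delta$ must already be among those generators. You instead give a self-contained inductive proof of Dickson's Lemma: fix $\mathbf{s} \in \Sigma'$, observe that every minimal element is either in the finite box below $\mathbf{s}$ or lies in one of the finitely many hyperplane slabs $\Sigma'_{i,k}$ with $k < s_i$, and apply the inductive hypothesis to each slab via the order-isomorphic projection onto $\nn^{r-1}$. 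The paper's route is shorter at the cost of an external reference; yours is longer but elementary and requires nothing beyond the well-ordering of $\nn$. Both are sound, and the case analysis in your inductive step (in particular, ruling out $\mathbf{m} > \mathbf{s}$ by minimality and observing that incomparability with $\mathbf{s}$ forces some coordinate to drop below $s_i$) is handled correctly.
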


\begin{note} We are grateful to the referee for drawing our attention to the
following proof, which is shorter than our original.
\end{note}

\begin{proof} The set $\Delta := \mathbf{a} - \Sigma := \{ \mathbf{a} -
\mathbf{n} : \mathbf{n} \in \Sigma\}$ is a non-empty subset of
$\nn^r$. Now $\nn^r$ is a Noetherian monoid with respect to
addition, by \cite[Proposition 1.3.5]{KreRob00}, for example. (All
terminology concerning monoids in this proof is as in \cite[Chapter
1]{KreRob00}.) Therefore the monoideal $(\Delta)$ of $\nn^r$
generated by $\Delta$ can be generated by finitely many elements of
$\Delta$, say by $\mathbf{m}^{(1)}, \ldots, \mathbf{m}^{(s)} \in
\Delta$. Therefore
$$
\Delta \subseteq (\Delta) = \left(\mathbf{m}^{(1)} + \nn^r\right)
\cup \cdots \cup \left(\mathbf{m}^{(s)} + \nn^r\right),
$$
from which it follows that any minimal member of $\Delta$ must
belong to the set $\{\mathbf{m}^{(1)}, \ldots, \mathbf{m}^{(s)}\}$.
Therefore any maximal member of $\Sigma$ must belong to the set
$\{\mathbf{a}-\mathbf{m}^{(1)}, \ldots,
\mathbf{a}-\mathbf{m}^{(s)}\}$.
\end{proof}

\begin{ntn}
\label{el.2} Let $\Sigma, \Delta \subseteq \Z^{r}$. We shall denote
by $\max(\Sigma)$ the set of maximal members of $\Sigma$. (If
$\Sigma$ has no maximal member, then we interpret $\max(\Sigma)$ as
the empty set.)

We shall write $\Sigma \preccurlyeq \Delta$ to indicate that, for
each $\mathbf{n} \in \Sigma$, there exists $\mathbf{m} \in \Delta$
such that $\mathbf{n} \leq \mathbf{m}$; moreover, we shall describe
this situation by the terminology `$\Delta$ {\em dominates\/}
$\Sigma$'. We shall use obvious variants of this terminology.
Observe that, if $\Sigma \preccurlyeq \Delta$ and $\Delta
\preccurlyeq \Sigma$, then $\max(\Sigma) = \max(\Delta)$, and
$\Sigma \preccurlyeq \max(\Sigma)$ if and only if $\Delta
\preccurlyeq \max(\Delta)$.
\end{ntn}

\begin{rmk} [Huy T\`ai H\`a {\cite[\S 2]{ha}}]
\label{lc.1} Let $\phi : \Z^r \lra \Z^m$, where $m$ is a positive
integer, be a homomorphism of Abelian groups. We use the notation
$R^{\phi}$, {\it etcetera}, of Definition \ref{nt.0}.  Let $\fa$ be
a $\Z^r$-graded ideal of $R$. Then $\left((H^i_{\fa} (\:
{\scriptscriptstyle \bullet} \:))^{\phi})\right)_{i \in \nn}$ and
$\left((H^i_{\fa^{\phi}} (\: {\scriptscriptstyle \bullet}
\:^{\phi}))\right)_{i \in \nn}$ are both negative strongly connected
sequences of covariant functors from $\mbox{\rm
*}\mathcal{C}^{\Z^r}(R)$ to $\mbox{\rm
*}\mathcal{C}^{\Z^m}(R^{\phi})$; moreover, the $0$th members of
these two connected sequences are the same functor, and, whenever,
$I$ is a $\mbox{\rm *}$-injective $\Z^r$-graded $R$-module and $i >
0$, we have $H^i_{\fa} (I) = 0$ when all gradings are forgotten, so
that $(H^i_{\fa} (I))^{\phi} = 0$ and $H^i_{\fa^{\phi}}(I^{\phi}) =
0$. Consequently, the two above-mentioned connected sequences are
isomorphic. Hence, for each $\Z^r$-graded $R$-module $M$, there is a
$\Z^m$-homogeneous isomorphism of $\Z^m$-graded $R^{\phi}$-modules
$$
(H^i_{\fa}(M))^{\phi} \cong H^i_{\fa^{\phi}}(M^{\phi}) \quad \mbox{~for each~}
i \in \nn.
$$
\end{rmk}

\begin{ntn}
\label{el.3} Throughout this section, we shall be concerned with the
situation where $$R = \bigoplus_{\mathbf{n} \in \nn^r}
R_\mathbf{n}$$ is positively graded; we shall only assume that $R$
is standard when this is explicitly stated.

We shall be greatly concerned with the $\nn^r$-graded ideal $$\fc :=
\fc(R) := \bigoplus_{\stackrel{\scriptstyle \mathbf{n} \in
\nn^r}{\mathbf{n}
> \mathbf{0}}} R_\mathbf{n}.
$$
We shall accord $R_+$ its usual meaning (see E. Hyry \cite[p.\
2215]{Hyry99}), so that
$$
R_+ := \bigoplus_{\stackrel{\scriptstyle \mathbf{n} \in
\nn^r}{\mathbf{n} \geq \mathbf{1}}} R_\mathbf{n} =
\bigoplus_{\mathbf{n} \in \N^r} R_\mathbf{n}.
$$
Observe that, when $r= 1$, we have $\fc = R_+$. However, in general
this is not the case when $r > 1$.
\end{ntn}

\begin{defi}
\label{el.4} Suppose that $R = \bigoplus_{\mathbf{n} \in \nn^r}
R_\mathbf{n}$ is positively graded and standard; let $M =
\bigoplus_{\mathbf{n} \in \Z^r} M_\mathbf{n}$ be a finitely
generated $\Z^r$-graded $R$-module, and let $j \in \nn$.

Let $\fb$ be an $\nn^r$-graded ideal such that $\dir(\fb) \neq
\emptyset$, and let $i \in \dir(\fb)$; consider the Abelian group
homomorphism $\phi_i:\Z^r \lra \Z$ for which $\phi_i((n_1, \ldots,
n_r)) = n_i$ for all $(n_1, \ldots, n_r)\in \Z^r$, which is just the
$i$th coordinate function.

By Lemma \ref{mu.2}, since $R_{\mathbf{e}_i} \subseteq \sqrt{\fb}$,
we have
$$
(R^{\phi_i})_+ = \bigoplus_{\stackrel{\scriptstyle{\mathbf{n} \in
\nn^r}}{n_i > 0}}R_\mathbf{n} \subseteq \sqrt{\fb}^{\phi_i}.
$$
It therefore follows from \cite[Corollary 2.5]{70}, with the
notation of that paper, that the $\nn$-graded $R^{\phi_i}$-module
$(H^j_{\fb}(M))^{\phi_i} \cong H^j_{\fb^{\phi_i}}(M^{\phi_i})$, if
non-zero, has finite end satisfying
$$ \nd ((H^j_{\fb}(M))^{\phi_i}) \leq a^*(M^{\phi_i}) =
\sup\{\nd(H^k_{(R^{\phi_i})_+}(M^{\phi_i})): k \in \nn\} =
\sup\{a^k_{(R^{\phi_i})_+}(M^{\phi_i}): k \in \nn\}.
$$
(Note that, in these circumstances, the invariant $a^*(M^{\phi_i})$
is an integer.) Thus, if $\mathbf{n} := (n_1, \ldots,n_r) \in \Z^r$
is such that $H^j_{\fb}(M)_{\mathbf{n}} \neq 0$, then $n_i \leq
a^*(M^{\phi_i})$. Thus there exists $\mathbf{a} \in
\Z^{\#\dir(\fb)}$ such that, for all $\mathbf{n} := (n_1,
\ldots,n_r) \in \Z^r$ with $H^j_{\fb}(M)_{\mathbf{n}} \neq 0$, we
have $\phi(\fb)(\mathbf{n}) \leq \mathbf{a}$. We define the {\em end
of\/} $H^j_{\fb}(M)$ by
$$
\nd(H^j_{\fb}(M)) := \max\left\{\phi(\fb)(\mathbf{n}) : \mathbf{n}
\in \Z^r \mbox{~and~}H^j_{\fb}(M)_{\mathbf{n}} \neq 0\right\}.
$$
By Lemma \ref{el.1}, if $H^j_{\fb}(M) \neq 0$ and $\dir(\fb) \neq
\emptyset$, then this end is a non-empty finite set of points of
$\Z^{\#\dir(\fb)}$. Note that the end of $H^j_{\fb}(M)$ dominates
$\phi(\fb)(\mathbf{n})$ for every $\mathbf{n} \in \Z^r$ for which
$H^j_{\fb}(M)_{\mathbf{n}} \neq 0$.

We draw the reader's attention to the fact that, when $r > 1$ and
$R_{\mathbf{e}_i} \neq 0$ for all $i \in \{1, \ldots, r\}$, the
ideal $R_+ = \bigoplus_{\stackrel{\scriptstyle \mathbf{n} \in
\nn^r}{\mathbf{n} \geq \mathbf{1}}} R_\mathbf{n}$ has empty set of
directions; consequently, we have not defined the end of the $i$th
local cohomology module $H^i_{R_+}(M)$ of $M$ with respect to $R_+$.
Thus we are not, in this paper, making any contribution to the
theory of multi-graded Castelnuovo regularity, and, in particular,
we are not proposing an alternative definition of $a$-invariant or
$a^*$-invariant (see \cite[Definitions 3.1.1 and 3.1.2]{ha}).
\end{defi}

With this definition of the ends of (certain) multi-graded local
cohomology modules, we can now establish multi-graded analogues of
some results in \cite[\S 2]{70}.

\begin{thm}
\label{el.5} Suppose that $R := \bigoplus_{\mathbf{n}\in \nn^r}
R_{\mathbf{n}}$ is positively graded and standard. Let $M$ be a
finitely generated $\Z^r$-graded $R$-module, and let
\[
I^{\scriptscriptstyle \bullet} : 0 \longrightarrow \mbox{\rm
*}E^0(M) \stackrel{d^0}{\longrightarrow} \mbox{\rm *}E^1(M)
\longrightarrow \cdots \longrightarrow \mbox{\rm *}E^i(M
)\stackrel{d^i}{\longrightarrow} \mbox{\rm *}E^{i+1}(M)
\longrightarrow \cdots
\]
be the minimal $\mbox{\rm *}$injective resolution of $M$.

Let $\fb$ be an $\nn^r$-graded ideal such that $\dir(\fb) \neq
\emptyset$, and let $j \in \nn$. Then
\begin{align*}
\max \left( \bigcup_{i=0}^j \nd(H^j_{\fb}(M))\right) & = \max
\left\{ \phi(\fb)(\mathbf{n}) : \mathbf{n} \in \Z^r \mbox{~and~}
(\Gamma_{\fb}(\mbox{\rm *}E^i(M)))_{\mathbf{n}} \neq 0 \mbox{~for
some~}i \in \{0,\ldots,j\} \right\}\\ & = \max\left(\bigcup_{i=0}^j
\bigcup_{\fp \in \mbox{\rm
*}\Var(\fb)}\phi(\fp;\fb)(\anch^i(\fp,M))\right).
\end{align*}

\end{thm}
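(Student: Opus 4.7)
My plan is to introduce three sets and prove the chain of equalities at the level of their maxima. Set
\begin{align*}
A_j & := \bigcup_{i=0}^{j}\nd(H^i_\fb(M)),\\
B_j & := \bigl\{\phi(\fb)(\mathbf{n}) : \mathbf{n} \in \Z^r \mbox{~and~} (\Gamma_\fb(\mbox{\rm *}E^i(M)))_\mathbf{n} \neq 0 \mbox{~for some~} i \in \{0,\ldots,j\}\bigr\},\\
C_j & := \bigcup_{i=0}^{j}\bigcup_{\fp \in \mbox{\rm *}\Var(\fb)}\phi(\fp;\fb)(\anch^i(\fp,M));
\end{align*}
the plan is then to establish $\max(A_j) = \max(B_j) = \max(C_j)$, with existence of the maxima following a posteriori from the inclusion $\max(B_j) \subseteq A_j$ together with the finiteness of each $\nd(H^i_\fb(M))$ (Definition \ref{el.4}).

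For $\max(B_j) = \max(C_j)$ I would unpack the decomposition $\mbox{\rm *}E^i(M) \cong \bigoplus_\alpha \mbox{\rm *}E(R/\fp_\alpha)(-\mathbf{n}_\alpha)$. By Lemma \ref{mi.0r}, $\Gamma_\fb$ annihilates the summands with $\fp_\alpha \not\supseteq \fb$ and preserves the rest. For a surviving summand, Proposition \ref{mu.4}(ii) forces $\phi(\fp_\alpha)(\mathbf{n}) \leq \phi(\fp_\alpha)(\mathbf{n}_\alpha)$ whenever $(\mbox{\rm *}E(R/\fp_\alpha)(-\mathbf{n}_\alpha))_\mathbf{n} \neq 0$; applying the coordinate-wise monotone projection $\phi(\fp_\alpha;\fb)$ and using $\phi(\fp_\alpha;\fb)\circ\phi(\fp_\alpha) = \phi(\fb)$ yields $\phi(\fb)(\mathbf{n}) \leq \phi(\fp_\alpha;\fb)(\phi(\fp_\alpha)(\mathbf{n}_\alpha)) \in C_j$, so $B_j \preccurlyeq C_j$. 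Conversely, given $\mathbf{a} \in \anch^i(\fp,M)$, fix $\alpha$ with $\fp_\alpha = \fp$ and $\phi(\fp)(\mathbf{n}_\alpha) = \mathbf{a}$; since $(R/\fp)_\mathbf{0} \neq 0$, the summand $\mbox{\rm *}E(R/\fp)(-\mathbf{n}_\alpha)$ is non-zero in degree $\mathbf{n}_\alpha$, whence $\phi(\fp;\fb)(\mathbf{a}) = \phi(\fb)(\mathbf{n}_\alpha) \in B_j$. Thus $C_j \subseteq B_j$, and combined with $B_j \preccurlyeq C_j$ this yields $\max(B_j) = \max(C_j)$.

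For $\max(A_j) = \max(B_j)$ the inclusion $A_j \subseteq B_j$ is immediate since $H^i_\fb(M)$ is a subquotient of $\Gamma_\fb(\mbox{\rm *}E^i(M))$. The reverse inclusion is the heart of the matter. Take $\mathbf{e} \in \max(B_j)$, and let $i_0 \in \{0,\ldots,j\}$ be the smallest index at which $\mathbf{e}$ is attained by some degree $\mathbf{n}$. Minimality of the $\mbox{\rm *}$injective resolution gives $\mbox{\rm *}$essentiality of $K^{i_0} := \Ker(d^{i_0})$ in $\mbox{\rm *}E^{i_0}(M)$; by left-exactness of $\Gamma_\fb$ this transfers to $\mbox{\rm *}$essentiality of $\Gamma_\fb(K^{i_0}) = K^{i_0} \cap \Gamma_\fb(\mbox{\rm *}E^{i_0}(M))$ inside $\Gamma_\fb(\mbox{\rm *}E^{i_0}(M))$. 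Applied to the cyclic graded submodule generated by a non-zero $\xi \in (\Gamma_\fb(\mbox{\rm *}E^{i_0}(M)))_\mathbf{n}$, essentiality produces a non-zero homogeneous $\eta = r\xi \in \Gamma_\fb(K^{i_0})$ of degree $\mathbf{n}+\mathbf{s}$ for some $\mathbf{s} \in \nn^r$. Maximality of $\mathbf{e}$ in $B_j$ combined with $\mathbf{s} \geq \mathbf{0}$ forces $\phi(\fb)(\mathbf{n}+\mathbf{s}) = \mathbf{e}$. Minimality of $i_0$ then ensures $(\Gamma_\fb(\mbox{\rm *}E^{i_0-1}(M)))_{\mathbf{n}+\mathbf{s}} = 0$, hence $(\Ima d^{i_0-1})_{\mathbf{n}+\mathbf{s}} = 0$, so $(H^{i_0}_\fb(M))_{\mathbf{n}+\mathbf{s}} \ni [\eta] \neq 0$; thus $\mathbf{e} \in A_j$, and, being maximal in $B_j \supseteq A_j$, also in $\max(A_j)$.

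The main obstacle I foresee is the essentiality step. Transferring $\mbox{\rm *}$essentiality of $K^{i_0} \subseteq \mbox{\rm *}E^{i_0}(M)$ through $\Gamma_\fb$ is a routine consequence of left-exactness, but the subsequent multi-degree bookkeeping is delicate: the shift $\mathbf{s}$ produced by essentiality need not be zero, and only the interplay between the maximality of $\mathbf{e}$ in $B_j$ and the condition $\mathbf{s} \in \nn^r$ forces $\phi(\fb)(\mathbf{s}) = \mathbf{0}$, keeping $\phi(\fb)(\mathbf{n}+\mathbf{s})$ at $\mathbf{e}$. This is the point at which the combinatorics of the multi-grading interact crucially with the anchor-point data, and it is exactly what allows the minimality of $i_0$ to kill the denominator of the cohomology quotient in the relevant multi-degree.
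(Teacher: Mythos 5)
Your argument is correct, but it follows a genuinely different route from the paper's. The paper's proof defines $\Delta_i$, $\Sigma_i$, $\Phi_i$ (your $\Delta_i \subseteq B_j$, etc.) and works entirely with the domination relation $\preccurlyeq$: it shows $\Delta_i \preccurlyeq \Sigma_i \preccurlyeq \Phi_i$ directly and then closes the loop by establishing $\Phi_j \preccurlyeq \bigcup_{k \leq j}\Delta_k$ via an induction on $j$, in which the $\mbox{\rm *}$essentiality of $\Ker\Gamma_\fb(d^j)$ in $\Gamma_\fb(\mbox{\rm *}E^j(M))$ pushes a non-zero degree down into either $H^j_\fb(M)$ or into $\Sigma_{j-1}$, the latter being handled by the inductive hypothesis. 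You replace this with two simplifications. First, you observe the direct set-theoretic inclusion $C_j \subseteq B_j$: for an anchor point $\mathbf{a}$ of $\fp \supseteq \fb$, the degree $\mathbf{n}_\alpha$ of the shifted summand already lies in $\Gamma_\fb(\mbox{\rm *}E^i(M))$ because the summand is $\fb$-torsion (Lemma \ref{mi.0r}). This is strictly stronger than the domination $\Phi_i \preccurlyeq \Sigma_i$-style statements the paper extracts, and it renders the paper's base case $\Phi_0 \preccurlyeq \Delta_0$ and part of the induction superfluous. Second, you avoid the induction altogether by choosing $\mathbf{e} \in \max(B_j)$ and then taking the \emph{smallest} index $i_0$ at which $\mathbf{e}$ is attained; minimality of $i_0$ plays exactly the role that the inductive hypothesis plays in the paper, guaranteeing that $(\Gamma_\fb(\mbox{\rm *}E^{i_0-1}(M)))$ vanishes in the relevant degree so that $\eta$ survives to a non-zero class in $H^{i_0}_\fb(M)$. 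The trade-off is that your argument only processes maximal elements of $B_j$ and hence needs to know a priori that $B_j \preccurlyeq \max(B_j)$, whereas the paper's domination bookkeeping yields that for free; in your setting this cofinality follows from $B_j \preccurlyeq C_j$ together with the finiteness of $C_j$ (each $\anch^i(\fp,M)$ is finite by Theorem \ref{mu.10}, and only finitely many $\fp$ contribute since the Bass sets of a finitely generated module are finite), which deserves to be said explicitly rather than waved at via ``$A_j$ is finite.'' One further small slip: you write $(\Ima d^{i_0-1})_{\mathbf{n}+\mathbf{s}} = 0$, but what vanishes (and what you actually need) is $(\Ima \Gamma_\fb(d^{i_0-1}))_{\mathbf{n}+\mathbf{s}}$, since the cohomology in question is that of $\Gamma_\fb(I^{\scriptscriptstyle\bullet})$; the stated vanishing of $(\Gamma_\fb(\mbox{\rm *}E^{i_0-1}(M)))_{\mathbf{n}+\mathbf{s}}$ gives the latter directly but not the former.
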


\begin{proof} Let $i \in \nn$ and set
$$\Delta_i := \left\{\phi(\fb)(\mathbf{n}) : \mathbf{n} \in \Z^r
\mbox{~and~}H^i_{\fb}(M)_{\mathbf{n}} \neq 0 \right\}, \quad
\Sigma_i := \left\{ \phi(\fb)(\mathbf{n}) : \mathbf{n} \in \Z^r
\mbox{~and~} (\Gamma_{\fb}(\mbox{\rm *}E^i(M)))_{\mathbf{n}} \neq 0
 \right\}
$$
and
$$
\Phi_i := \bigcup_{\fp \in \mbox{\rm
*}\Var(\fb)}\phi(\fp;\fb)(\anch^i(\fp,M)).
$$
Also, let
$$
\theta_i : \mbox{\rm *}E^{i}(M) \stackrel{\cong}{\longrightarrow}
\bigoplus_{\alpha \in \Lambda_i} \mbox{\rm
*}E(R/{\mathfrak{p}}_{\alpha})(- \mathbf{n}_{\alpha})
$$
be a $\Z^r$-homogeneous isomorphism, where ${\mathfrak{p}}_{\alpha}
\in \mbox{\rm *}\Spec (R)$ and $\mathbf{n}_{\alpha} \in \Z^r$ for
all $\alpha \in \Lambda_i$.

We shall first show that $\Delta_i \preccurlyeq \Sigma_i
\preccurlyeq \Phi_i$. Now $H^i_{\fb}(M)$ is a homomorphic image, by
a $\Z^r$-homogeneous epimorphism, of
$$
\Ker \left(\Gamma_{\fb}(d^i) : \Gamma_{\fb}(\mbox{\rm *}E^i(M)) \lra
\Gamma_{\fb}(\mbox{\rm
*}E^{i+1}(M))\right).
$$
Therefore, if $\mathbf{n} \in \Z^r$ is such that
$H^i_{\fb}(M)_{\mathbf{n}} \neq 0$, then $(\Gamma_{\fb}(\mbox{\rm
*}E^i(M)))_{\mathbf{n}} \neq 0$.  This proves that $\Delta_i
\subseteq \Sigma_i$, so that $\Delta_i \preccurlyeq \Sigma_i$.

Furthermore, given $\mathbf{n} \in \Z^r$ such that
$(\Gamma_{\fb}(\mbox{\rm
*}E^i(M)))_{\mathbf{n}} \neq 0$, we can see from the isomorphism
$\theta_i$ that there must exist $\alpha \in \Lambda_i$ such that
$\fb \subseteq \fp_{\alpha}$ and $(\mbox{\rm
*}E(R/{\mathfrak{p}}_{\alpha})(- \mathbf{n}_{\alpha}))_{\mathbf{n}}
\neq 0$. It now follows from Proposition \ref{mu.4}(ii) that
$\phi(\fp_{\alpha})(\mathbf{n}) \leq
\phi(\fp_{\alpha})(\mathbf{n}_{\alpha})$, so that
$$
\phi(\fp_{\alpha};\fb)(\phi(\fp_{\alpha})(\mathbf{n})) \leq
\phi(\fp_{\alpha};\fb)(\phi(\fp_{\alpha})(\mathbf{n}_{\alpha})).
$$
Now $\phi(\fp_{\alpha})(\mathbf{n}_{\alpha})$ is an $i$th level
anchor point of $\fp_{\alpha}$ for $M$, and
$\phi(\fp_{\alpha};\fb)\circ\phi(\fp_{\alpha}) = \phi(\fb)$. This is
enough to prove that $\Sigma_i \preccurlyeq \Phi_i$.

In particular, we have proved that $\Delta_0 \preccurlyeq \Sigma_0
\preccurlyeq \Phi_0$. We shall prove the desired result by induction
on $j$. We show next that $\Phi_0 \preccurlyeq \Delta_0$, and this,
together with the above, will prove the claim in the case where $j =
0$. Let $\mathbf{m} \in \Phi_0$. Thus $\mathbf{m} \in
\Z^{\#\dir(\fb)}$ and there exists $\alpha \in \Lambda_0$ such that
$\fp_{\alpha} \in \mbox{\rm *}\Var(\fb)$ and $\mathbf{m} =
\phi(\fp_{\alpha};\fb)(\phi(\fp_{\alpha})(\mathbf{n}_{\alpha}))$.
Now the image of
$$
\bigoplus_{\stackrel{\scriptstyle \mathbf{n} \in
\Z^{r}}{\phi(\fp_{\alpha})(\mathbf{n}) \geq
\phi(\fp_{\alpha})(\mathbf{n}_{\alpha})}}\left(\mbox{\rm
*}E(R/{\mathfrak{p}}_{\alpha})(-
\mathbf{n}_{\alpha})\right)_{\mathbf{n}}
$$
under $\theta_0^{-1}$ is a non-zero $\Z^{r}$-graded submodule of
$\Gamma_{\fb}(\mbox{\rm
*}E^0(M))$; as the latter is a
$\mbox{\rm *}$essential extension of $\Gamma_{\fb}(M)$, it follows
that there exists $\mathbf{n} \in \Z^{r}$ with
$\phi(\fp_{\alpha})(\mathbf{n}) \geq
\phi(\fp_{\alpha})(\mathbf{n}_{\alpha})$ such that
$\left(\Gamma_{\fb}(M)\right)_{\mathbf{n}} \neq 0$. Moreover,
$$
\phi(\fb)(\mathbf{n}) =
\phi(\fp_{\alpha};\fb)\left(\phi(\fp_{\alpha})(\mathbf{n})\right)
\geq
\phi(\fp_{\alpha};\fb)\left(\phi(\fp_{\alpha})(\mathbf{n}_{\alpha})\right)
= \mathbf{m}.
$$
It follows that $\Phi_0 \preccurlyeq \Delta_0$, so that
$\max(\Delta_0) = \max(\Sigma_0) = \max(\Phi_0)$, and the desired
result has been proved when $j = 0$.

Now suppose that $j > 0$ and make the obvious inductive assumption.
As we have already proved that $\Delta_i \preccurlyeq \Sigma_i$ and
$\Sigma_i \preccurlyeq \Phi_i$ for all $i = 0, \ldots, j$, it will
be enough, in order to complete the inductive step, for us to prove
that $\Phi_j \preccurlyeq \bigcup_{k=0}^j\Delta_k$. So consider
$\alpha \in \Lambda_j$ such that $\fp_{\alpha} \in \mbox{\rm
*}\Var(\fb)$; we shall show
that
$\phi(\fp_{\alpha};\fb)(\phi(\fp_{\alpha})(\mathbf{n}_{\alpha}))$ is
dominated by a member of $\Delta_0 \cup \Delta_1 \cup \cdots \cup
\Delta_{j-1} \cup \Delta_j$.

Now the image of
$$
\bigoplus_{\stackrel{\scriptstyle \mathbf{n} \in
\Z^{r}}{\phi(\fp_{\alpha})(\mathbf{n}) \geq
\phi(\fp_{\alpha})(\mathbf{n}_{\alpha})}}\left(\mbox{\rm
*}E(R/{\mathfrak{p}}_{\alpha})(-
\mathbf{n}_{\alpha})\right)_{\mathbf{n}}
$$
under $\theta_j^{-1}$ is a non-zero $\Z^{r}$-graded submodule of
$\Gamma_{\fb}(\mbox{\rm *}E^j(M))$; as the latter is a $\mbox{\rm
*}$essential extension of $\Ker \Gamma_{\fb}(d^j)$, it follows that
there exists $\mathbf{n} \in \Z^{r}$ with
$\phi(\fp_{\alpha})(\mathbf{n}) \geq
\phi(\fp_{\alpha})(\mathbf{n}_{\alpha})$ such that $\left(\Ker
\Gamma_{\fb}(d^j)\right)_{\mathbf{n}} \neq 0$. There is an exact
sequence
$$
0 \lra \Ima \Gamma_{\fb}(d^{j-1})\lra \Ker\Gamma_{\fb}(d^j) \lra
H^j_{\fb}(M) \lra 0
$$
of graded $\Z^{r}$-modules and homogeneous homomorphisms. Therefore
either $H^j_{\fb}(M)_{\mathbf{n}} \neq 0$ or $$\left(\Ima
\Gamma_{\fb}(d^{j-1})\right)_{\mathbf{n}} \neq 0.$$ In the first
case, $\phi(\fp_{\alpha};\fb)(\phi(\fp_{\alpha})(\mathbf{n})) =
\phi(\mathbf{b})(\mathbf{n}) \in \Delta_j$. In the second case,
$(\Gamma_{\fb}(\mbox{\rm
*}E^{j-1}(M)))_{\mathbf{n}} \neq 0$, whence
$\phi(\mathbf{b})(\mathbf{n}) \in \Sigma_{j-1}$, so that, by the
inductive hypothesis, $\phi(\mathbf{b})(\mathbf{n})$ is dominated by
an element of $\Delta_0 \cup \Delta_1 \cup \cdots \cup
\Delta_{j-1}$; thus, in this case also,
$\phi(\fp_{\alpha};\fb)(\phi(\fp_{\alpha})(\mathbf{n}_{\alpha}))$ is
dominated by an element of $\bigcup_{k=0}^{j} \Delta_k$. This is
enough to complete the inductive step.
\end{proof}

\begin{ntn}
\label{el.6} Suppose that $R := \bigoplus_{\mathbf{n}\in \nn^r}
R_{\mathbf{n}}$ is positively graded and standard, and let $M$ be a
finitely generated $\Z^r$-graded $R$-module. Let $\mathcal{Q}$ be a
non-empty subset of $\{1, \ldots,r\}$. Define $\fc^{\mathcal{Q}} : =
\sum_{i \in \mathcal{Q}}R_{\mathbf{e}_i}R$. Then $\dir
(\fc^{\mathcal{Q}}) \supseteq \mathcal{Q}$, and $\fc^{\mathcal{Q}}$
is the smallest ideal (up to radical) with set of directions
containing $\mathcal{Q}$. We also define the {\em
$\mathcal{Q}$-bound} $\bnd^{\mathcal{Q}}(M)$ of $M$ by
$$
\bnd^{\mathcal{Q}}(M) := \max\left(\bigcup_{i \in
\nn}\nd(H^i_{\fc^{\mathcal{Q}}}(M))\right).
$$
Observe that $\bnd^{\mathcal{Q}}(M)$ is a finite set of points in
$\Z^{\#\dir(\fc^{\mathcal{Q}})}$, because
$H^i_{\fc^{\mathcal{Q}}}(M) = 0$ whenever $i$ exceeds the arithmetic
rank of $\fc^{\mathcal{Q}}$.

For consistency with our earlier notation in \ref{el.3}, we
abbreviate $\fc^{\{1,\ldots,r\}} =
\sum_{\mathbf{n}>\mathbf{0}}R_{\mathbf{n}}$ by $\fc$. Note that
$\bnd^{\{1,\ldots,r\}}(M) = \max\left(\bigcup_{i \in
\nn}\nd(H^i_{\fc}(M))\right)$ is a finite set of points in $\Z^r$.
\end{ntn}

The following corollaries, which are multi-graded analogues of
\cite[Corollaries 2.5, 2.6]{70}, can now be deduced immediately from
Theorem \ref{el.5}.

\begin{cor}
\label{el.7} Suppose that $R := \bigoplus_{\mathbf{n}\in \nn^r}
R_{\mathbf{n}}$ is positively graded and standard. Let $M$ be a
finitely generated $\Z^r$-graded $R$-module, and let
\[
I^{\scriptscriptstyle \bullet} : 0 \longrightarrow \mbox{\rm
*}E^0(M) \stackrel{d^0}{\longrightarrow} \mbox{\rm *}E^1(M)
\longrightarrow \cdots \longrightarrow \mbox{\rm *}E^i(M
)\stackrel{d^i}{\longrightarrow} \mbox{\rm *}E^{i+1}(M)
\longrightarrow \cdots
\]
be the minimal $\mbox{\rm *}$injective resolution of $M$.

Let $\fb$ be an $\nn^r$-graded ideal of $R$ such that $\dir(\fb)
\neq \emptyset$, and let $j \in \nn$. Then
\begin{align*}
\max \left( \bigcup_{i=0}^j \nd(H^j_{\fb}(M))\right)  & \preccurlyeq
\max\left(\bigcup_{i=0}^j \bigcup_{\fp \in \mbox{\rm
*}\Var(\fc^{\dir(\fb)})}\phi(\fp;\fc^{\dir(\fb)})(\anch^i(\fp,M))\right)\\&
= \max \left\{ \phi(\fb)(\mathbf{n}) : \mathbf{n} \in \Z^r
\mbox{~and~} (\Gamma_{\fc^{\dir(\fb)}}(\mbox{\rm
*}E^i(M)))_{\mathbf{n}} \neq 0 \mbox{~for an~} i\in\{0,\ldots,j\}
\right\}\\ & = \max \left( \bigcup_{i=0}^j
\nd(H^j_{\fc^{\dir(\fb)}}(M))\right) \preccurlyeq
\bnd^{\dir(\fb)}(M).
\end{align*}
\end{cor}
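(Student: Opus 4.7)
The plan is to reduce the entire chain to Theorem \ref{el.5} applied to the ideal $\fc^{\dir(\fb)}$ in place of $\fb$, together with one elementary observation comparing the torsion functors $\Gamma_\fb$ and $\Gamma_{\fc^{\dir(\fb)}}$.

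First I would record two preliminary facts. Fact one: $\fc^{\dir(\fb)} \subseteq \sqrt{\fb}$ directly from the definition of $\dir(\fb)$, so $\Gamma_\fb(X) \subseteq \Gamma_{\fc^{\dir(\fb)}}(X)$ for every $\Z^r$-graded $R$-module $X$. Fact two: $\dir(\fc^{\dir(\fb)}) = \dir(\fb)$, so that the maps $\phi(\fb)$ and $\phi(\fc^{\dir(\fb)})$ coincide, and likewise $\phi(\fp;\fb) = \phi(\fp;\fc^{\dir(\fb)})$ for every $\fp \in \mbox{\rm *}\Var(\fc^{\dir(\fb)})$. The inclusion $\dir(\fc^{\dir(\fb)}) \supseteq \dir(\fb)$ is noted in \ref{el.6}; for the reverse, any $j \notin \dir(\fb)$ necessarily has $R_{\mathbf{e}_j} \neq 0$ (else trivially $j$ would belong to every $\dir$-set), and a homogeneity argument in the spirit of Lemma \ref{mu.2} shows that no power of such a nonzero element can lie in $\fc^{\dir(\fb)} = \sum_{i \in \dir(\fb)} R_{\mathbf{e}_i}R$, because the $j$th coordinate of any degree appearing in that ideal is forced to $0$.

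Next I would carry out the three comparisons. For the first $\preccurlyeq$: as at the start of the proof of Theorem \ref{el.5}, $H^i_\fb(M)$ is a $\Z^r$-homogeneous quotient of a submodule of $\Gamma_\fb(\mbox{\rm *}E^i(M))$; so any $\mathbf{n}$ with $H^i_\fb(M)_\mathbf{n} \neq 0$ satisfies $\Gamma_\fb(\mbox{\rm *}E^i(M))_\mathbf{n} \neq 0$, hence $\Gamma_{\fc^{\dir(\fb)}}(\mbox{\rm *}E^i(M))_\mathbf{n} \neq 0$ by Fact one, giving the required domination via $\phi(\fb)$. The two middle equalities are then direct outputs of Theorem \ref{el.5} applied with $\fc^{\dir(\fb)}$ in place of $\fb$ (legitimate since $\dir(\fc^{\dir(\fb)}) = \dir(\fb) \neq \emptyset$), after using Fact two to replace $\phi(\fc^{\dir(\fb)})$ by $\phi(\fb)$ where written. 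The closing $\preccurlyeq \bnd^{\dir(\fb)}(M)$ is immediate from the Notation \ref{el.6} formula $\bnd^{\dir(\fb)}(M) = \max\!\left(\bigcup_{i \in \nn}\nd(H^i_{\fc^{\dir(\fb)}}(M))\right)$, since our union ranges only over $i \leq j$.

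The main, and essentially only, point of genuine content is the identification $\dir(\fc^{\dir(\fb)}) = \dir(\fb)$, since without it the map $\phi(\fb)$ written in the Corollary would not match the output of Theorem \ref{el.5} applied to $\fc^{\dir(\fb)}$; everything else is either a direct invocation of Theorem \ref{el.5} or a definitional unwinding.
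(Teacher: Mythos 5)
Your overall plan is exactly what the paper intends: Corollary~\ref{el.7} is stated in the paper as something that ``can now be deduced immediately from Theorem \ref{el.5}'', and your reduction --- apply Theorem~\ref{el.5} with $\fc^{\dir(\fb)}$ in place of $\fb$, compare $\Gamma_\fb$ with $\Gamma_{\fc^{\dir(\fb)}}$ via $\fc^{\dir(\fb)} \subseteq \sqrt{\fb}$, and observe that $\phi(\fb)=\phi(\fc^{\dir(\fb)})$ because $\dir(\fc^{\dir(\fb)})=\dir(\fb)$ --- is precisely the right filling-in, and you have correctly singled out $\dir(\fc^{\dir(\fb)})=\dir(\fb)$ as the one fact that needs to be supplied.

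Your justification of that fact, however, is flawed as written. The parenthetical ``because the $j$th coordinate of any degree appearing in that ideal is forced to $0$'' is false: since $R$ is standard, $\fc^{\dir(\fb)} = \bigoplus\{R_{\mathbf{n}} : n_i \geq 1 \text{ for some } i \in \dir(\fb)\}$, and such degrees $\mathbf{n}$ can have arbitrary $j$th coordinate for $j \notin \dir(\fb)$. What is true is that a power $x^k$ of a homogeneous $x \in R_{\mathbf{e}_j}$ has degree $k\mathbf{e}_j$, whose $i$th coordinate vanishes for every $i \in \dir(\fb)$, so $k\mathbf{e}_j$ lies outside the support of $\fc^{\dir(\fb)}$ --- but this only forces $x^k \notin \fc^{\dir(\fb)}$ when $x^k \neq 0$, and you have not ruled out $x$ being nilpotent (merely $R_{\mathbf{e}_j}\neq 0$ does not do this). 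The clean way out dispenses with the homogeneity argument entirely: your Fact one already gives $\fc^{\dir(\fb)} \subseteq \sqrt{\fb}$, hence $\sqrt{\fc^{\dir(\fb)}} \subseteq \sqrt{\fb}$, hence $\dir(\fc^{\dir(\fb)}) \subseteq \dir(\fb)$; combined with the containment $\dir(\fc^{\dir(\fb)}) \supseteq \dir(\fb)$ recorded in Notation~\ref{el.6}, equality follows. With that repair the proof is complete and matches the intended deduction.
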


\begin{cor}
\label{el.8} Suppose that $R := \bigoplus_{\mathbf{n}\in \nn^r}
R_{\mathbf{n}}$ is positively graded and standard. Let $M$ be a
finitely generated $\Z^r$-graded $R$-module.

Let $\fb$ be an $\nn^r$-graded ideal of $R$ of arithmetic rank $t$
such that $\dir(\fb) \neq \emptyset$, and let $k \in \N$ with $k >
t$. Then
\begin{align*}
\max\left(\bigcup_{i=0}^t \bigcup_{\fp \in \mbox{\rm
*}\Var(\fb)}\phi(\fp;\fb)(\anch^i(\fp,M))\right)
& = \max \left( \bigcup_{i=0}^t \nd(H^i_{\fb}(M))\right)  = \max
\left( \bigcup_{i=0}^k \nd(H^i_{\fb}(M))\right)\\ & =
\max\left(\bigcup_{i=0}^k \bigcup_{\fp \in \mbox{\rm
*}\Var(\fb)}\phi(\fp;\fb)(\anch^i(\fp,M))\right).
\end{align*}
Consequently, for a $\fp \in \mbox{\rm
*}\Var(\fb)$ and $\mathbf{a} \in \anch(\fp,M)$, we can conclude that
$\phi(\fp;\fb)(\mathbf{a})$ is dominated by
$$
\max\left(\bigcup_{i=0}^t \bigcup_{\fp \in \mbox{\rm
*}\Var(\fb)}\phi(\fp;\fb)(\anch^i(\fp,M))\right),
$$
a set of points of $\Z^{\#\dir(\fb)}$ which arises from
consideration of just the $0$th, $1$st, $\ldots$, $(t-1)$th and
$t$th terms of the minimal $\mbox{\rm *}$injective resolution of
$M$.
\end{cor}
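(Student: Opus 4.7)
The plan is to derive the corollary directly from Theorem~\ref{el.5}, using in addition only the standard vanishing $H^i_{\fb}(M) = 0$ for every $i > \ara(\fb) = t$ (which follows, for example, by computing local cohomology via a \v{C}ech complex built from $t$ $\Z^r$-homogeneous generators of $\sqrt{\fb}$, a complex of length $t$). Because of this vanishing, $\nd(H^i_{\fb}(M)) = \emptyset$ for every $i$ with $t < i \leq k$, and therefore
$$
\bigcup_{i=0}^{t}\nd(H^i_{\fb}(M)) \;=\; \bigcup_{i=0}^{k}\nd(H^i_{\fb}(M)).
$$

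First I would apply Theorem~\ref{el.5} with $j = t$ to identify the $\max$ of the left-hand union above with the set
$\max\bigl(\bigcup_{i=0}^{t}\bigcup_{\fp \in \mbox{\rm *}\Var(\fb)}\phi(\fp;\fb)(\anch^i(\fp,M))\bigr)$, and then apply the same theorem with $j = k$ to identify the $\max$ of the right-hand union with
$\max\bigl(\bigcup_{i=0}^{k}\bigcup_{\fp \in \mbox{\rm *}\Var(\fb)}\phi(\fp;\fb)(\anch^i(\fp,M))\bigr)$. Stringing these two equalities together with the displayed identity chains all four expressions in the corollary, establishing the displayed equalities.

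For the consequential assertion, let $\fp \in \mbox{\rm *}\Var(\fb)$ and $\mathbf{a} \in \anch(\fp,M)$, so that $\mathbf{a} \in \anch^i(\fp,M)$ for some $i \in \nn$. Adopting the notation $\Delta_k,\Sigma_k,\Phi_k$ from the proof of Theorem~\ref{el.5}, we have $\phi(\fp;\fb)(\mathbf{a}) \in \Phi_i$. The proof of Theorem~\ref{el.5} establishes $\Phi_i \preccurlyeq \bigcup_{k=0}^{i}\Delta_k$, and since $\Delta_k = \emptyset$ for $k > t$ by the arithmetic-rank vanishing, in fact $\Phi_i \preccurlyeq \bigcup_{k=0}^{t}\Delta_k$. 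Each $\Delta_k$ is bounded above in $\Z^{\#\dir(\fb)}$ by the $a^\ast$-bound provided in Definition~\ref{el.4}, so Lemma~\ref{el.1} supplies finitely many maximal elements that dominate the entire finite union $\bigcup_{k=0}^{t}\Delta_k$. Consequently $\phi(\fp;\fb)(\mathbf{a})$ is dominated by an element of $\max\bigl(\bigcup_{k=0}^{t}\nd(H^k_{\fb}(M))\bigr)$, and by the first part of the corollary this set equals $\max\bigl(\bigcup_{i=0}^{t}\bigcup_{\fp \in \mbox{\rm *}\Var(\fb)}\phi(\fp;\fb)(\anch^i(\fp,M))\bigr)$, as required.

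The main obstacle will be purely notational bookkeeping: one has to swap carefully between the ``ends of local cohomology'' side and the ``projected anchor points'' side of Theorem~\ref{el.5} while keeping track of which index range is in play, and one must invoke Lemma~\ref{el.1} at the right moment to promote a domination statement by an infinite-looking union into one by its finite set of maxima. No further cohomological input beyond Theorem~\ref{el.5} and the vanishing $H^i_{\fb}(M)=0$ for $i>\ara(\fb)$ should be needed.
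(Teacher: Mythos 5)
The paper offers no explicit proof of Corollary \ref{el.8}, stating only that it ``can now be deduced immediately from Theorem \ref{el.5}''; your proposal supplies exactly the intended details, and it is correct. The two ingredients you use — the \v{C}ech-complex vanishing $H^i_{\fb}(M)=0$ for $i>\ara(\fb)$, and two applications of Theorem \ref{el.5} at $j=t$ and $j=k$ — are the natural ones and give the displayed chain of equalities; your treatment of the consequential domination claim, via the relation $\Phi_i\preccurlyeq\bigcup_{k=0}^{i}\Delta_k$ extracted from the proof of Theorem \ref{el.5}, the vanishing of $\Delta_k$ for $k>t$, and the bounded-above/finitely-many-maximal-elements property from Definition \ref{el.4} and Lemma \ref{el.1}, is also sound. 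This matches the paper's approach in spirit, just written out.
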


Our next aim is the establishment of multi-graded analogues of
\cite[Corollaries 3.1 and 3.2]{70}.

\begin{lem}\label{el.9} Suppose that $R := \bigoplus_{\mathbf{n}\in \nn^r}
R_{\mathbf{n}}$ is positively graded, and let $\fm$ be a $\mbox{\rm
*}$maximal ideal of $R$. Then $\fm_{\mathbf{0}} := \fm \cap
R_{\mathbf{0}}$ is a maximal ideal of $R_{\mathbf{0}}$ and $\fm =
\fm_{\mathbf{0}}\oplus \fc$, where $\fc$ is as defined in Notation\/
{\rm \ref{el.3}}.
\end{lem}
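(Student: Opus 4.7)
The plan is to deduce everything from the $*$maximality of $\fm$ by exploiting the fact that, for a positively graded $R$, the $\mathbf{0}$-component of any graded ideal $\fa$ is just $\fa \cap R_{\mathbf{0}}$, while every element of $\fa$ of positive total degree lies in $\fc$.

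First I would show the containment $\fc \subseteq \fm$. The ideal $\fm + \fc$ is $\Z^r$-graded, so by $*$maximality of $\fm$ it is either $\fm$ itself or the whole of $R$. If $\fm + \fc = R$, then $1 = m + c$ for some $m \in \fm$ and $c \in \fc$; comparing $\mathbf{0}$-components (and using that $R$ is positively graded, so $\fc$ has no $\mathbf{0}$-component) gives that the $\mathbf{0}$-component of $m$ is $1$. Since $\fm$ is graded, this component lies in $\fm \cap R_{\mathbf{0}} = \fm_{\mathbf{0}}$, so $1 \in \fm$, contradicting properness. Hence $\fc \subseteq \fm$.

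Next I would verify the direct-sum decomposition $\fm = \fm_{\mathbf{0}} \oplus \fc$. Given $x \in \fm$, the grading writes $x = x_{\mathbf{0}} + \sum_{\mathbf{n} > \mathbf{0}} x_{\mathbf{n}}$ (with the sum running over strictly positive multi-degrees, again because $R$ is positively graded); since $\fm$ is graded, $x_{\mathbf{0}} \in \fm_{\mathbf{0}}$, and the remaining terms lie in $\fc$, so $\fm = \fm_{\mathbf{0}} + \fc$. The intersection $\fm_{\mathbf{0}} \cap \fc$ is contained in $R_{\mathbf{0}} \cap \fc = 0$, so the sum is direct.

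Finally, I would prove that $\fm_{\mathbf{0}}$ is a maximal ideal of $R_{\mathbf{0}}$. It is proper, since $1 \in \fm_{\mathbf{0}}$ would force $1 \in \fm$. Suppose $\fn$ is an ideal of $R_{\mathbf{0}}$ with $\fm_{\mathbf{0}} \subseteq \fn \subseteq R_{\mathbf{0}}$; I claim $\fn \oplus \fc$ is a $\Z^r$-graded ideal of $R$. It is an $R_{\mathbf{0}}$-submodule; for $\mathbf{n} \in \nn^r$ with $\mathbf{n} > \mathbf{0}$ and $r \in R_{\mathbf{n}}$, we have $r\fn \subseteq R_{\mathbf{n}} \subseteq \fc$ and $r\fc \subseteq \fc$; and nothing else can occur since $R_{\mathbf{n}} = 0$ for $\mathbf{n} \not\geq \mathbf{0}$. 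Because $\fm \subseteq \fn \oplus \fc$, $*$maximality of $\fm$ forces either $\fn \oplus \fc = \fm$ (giving $\fn = \fm_{\mathbf{0}}$) or $\fn \oplus \fc = R$ (giving $\fn = R_{\mathbf{0}}$). This shows $\fm_{\mathbf{0}}$ is maximal in $R_{\mathbf{0}}$.

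There is no real obstacle: the only subtle point is checking that $\fn \oplus \fc$ really is an ideal of $R$, which uses positivity of the grading to exclude products landing in negative multi-degrees.
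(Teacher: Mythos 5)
Your proof is correct and is essentially the same argument as the paper's, just spelled out in more steps. The paper observes directly that $\fm_{\mathbf{0}}\oplus\fc$ is a proper graded ideal containing $\fm$ (using that $\fm_{\mathbf{0}}$ is prime, hence proper, and that every homogeneous component of $\fm$ of degree $>\mathbf{0}$ lies in $\fc$), so $*$maximality gives $\fm=\fm_{\mathbf{0}}\oplus\fc$ in one stroke; you instead first prove $\fc\subseteq\fm$ by a separate $*$maximality argument applied to $\fm+\fc$ and then assemble the decomposition, and you use the degree-$\mathbf{0}$ comparison in place of the primeness of $\fm_{\mathbf{0}}$ to rule out $1\in\fm+\fc$. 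The final step (verifying $\fn\oplus\fc$ is a graded ideal and invoking $*$maximality to get maximality of $\fm_{\mathbf{0}}$) is exactly the argument the paper leaves implicit in its closing sentence.
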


\begin{proof} Recall that $$\fc :=
\bigoplus_{\stackrel{\scriptstyle \mathbf{n} \in \nn^r}{\mathbf{n}
> \mathbf{0}}} R_\mathbf{n}.$$
Since $\fm_{\mathbf{0}} \in \Spec (R_{\mathbf{0}})$, it follows that
$R \supset \fm_{\mathbf{0}}\bigoplus \fc \supseteq \fm$, so that
$\fm = \fm_{\mathbf{0}}\bigoplus \fc$. Furthermore,
$\fm_{\mathbf{0}}$ must be a maximal ideal of $R_{\mathbf{0}}$.
\end{proof}

\begin{cor}
\label{el.10} Suppose that $R := \bigoplus_{\mathbf{n}\in \nn^r}
R_{\mathbf{n}}$ is positively graded and standard. Let $M$ be a
finitely generated $\Z^r$-graded $R$-module; let $\fb$ be an
$\nn^r$-graded ideal of $R$ such that $\dir(\fb) \neq \emptyset$.
Then
\[
\max \left( \bigcup_{i\in \nn} \nd(H^i_{\fb}(M))\right) = \max
\left(\bigcup_{\fm \in \mbox{\rm *}\Var(\fb) \cap \mbox{\rm
*}\Max(R)}\bigcup_{i\in \nn} \phi(\fm;\fb) \nd(H^i_{\fm}(M))\right).
\]
\end{cor}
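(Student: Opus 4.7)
The plan is to translate both sides into the language of anchor points using Theorem \ref{el.5} and then invoke Corollary \ref{mu.8} to lift anchor points along *-graded prime chains.

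First, I would apply Theorem \ref{el.5}, taking $j$ to range over all non-negative integers, to rewrite the left-hand side as
$$
\max\bigcup_{i\in\nn}\nd(H^i_{\fb}(M))=\max\bigcup_{i\in\nn}\bigcup_{\fp\in\mbox{\rm *}\Var(\fb)}\phi(\fp;\fb)(\anch^i(\fp,M)).
$$
For the right-hand side, note that Lemma \ref{el.9} guarantees that any $\fm\in\mbox{\rm *}\Max(R)$ contains $\fc$, so $\dir(\fm)=\{1,\ldots,r\}$ and $\mbox{\rm *}\Var(\fm)=\{\fm\}$. Applying Theorem \ref{el.5} to $\fm$ in place of $\fb$ then shrinks the inner union to a single prime, and monotonicity of the projection $\phi(\fm;\fb)$ (which merely forgets the coordinates outside $\dir(\fb)$) yields
$$
\max\bigcup_{i\in\nn}\phi(\fm;\fb)(\nd(H^i_{\fm}(M)))=\max\bigcup_{i\in\nn}\phi(\fm;\fb)(\anch^i(\fm,M)).
$$

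The inclusion $\succcurlyeq$ is then immediate, since each $\fm\in\mbox{\rm *}\Var(\fb)\cap\mbox{\rm *}\Max(R)$ is a particular member of $\mbox{\rm *}\Var(\fb)$, so the right-hand set is a subset of the left-hand set after the reformulation.

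For the reverse inequality $\preccurlyeq$, the main step is to show that every anchor-point contribution on the left is matched by a *-maximal one on the right. Given $\fp\in\mbox{\rm *}\Var(\fb)$ and $\mathbf{a}\in\anch^i(\fp,M)$, Corollary \ref{mu.8} furnishes a *-maximal $\fq\supseteq\fp$ (with $\dir(\fq)=\{1,\ldots,r\}$) and a point $\mathbf{b}\in\anch(\fq,M)$ whose projection via $\phi(\fq;\fp)$ (the map that forgets the non-$\fp$-directions) is exactly $\mathbf{a}$. Because $\phi(\fp;\fb)\circ\phi(\fp)=\phi(\fb)=\phi(\fq;\fb)\circ\phi(\fq)$ and $\phi(\fq;\fp)\circ\phi(\fq)=\phi(\fp)$, the surjectivity of $\phi(\fq)$ forces $\phi(\fp;\fb)\circ\phi(\fq;\fp)=\phi(\fq;\fb)$. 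Hence
$$
\phi(\fp;\fb)(\mathbf{a})=\phi(\fp;\fb)(\phi(\fq;\fp)(\mathbf{b}))=\phi(\fq;\fb)(\mathbf{b}),
$$
which lies in the right-hand set (with $\fm=\fq$). This equality, combined with the reformulations in the first paragraph, establishes $\preccurlyeq$ and completes the proof.

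The genuine obstacle here is the bookkeeping of directions: one must ensure that Corollary \ref{mu.8} is applicable (it is, since $\fp\supseteq\fb\supseteq R_{\mathbf{1}}R$ in view of $\dir(\fb)\neq\emptyset$ together with Lemma \ref{mu.2}), and the compatibility identity $\phi(\fq;\fb)=\phi(\fp;\fb)\circ\phi(\fq;\fp)$ must be verified carefully, as it is the single fact that converts an anchor-point extension at the prime level into the required domination of projected ends.
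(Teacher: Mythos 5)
Your argument is correct and follows essentially the same route as the paper's own proof: reformulate both sides of the displayed identity in anchor-point language via Theorem \ref{el.5}, observe that the $\succcurlyeq$ direction then reduces to $\mbox{\rm *}\Var(\fb)\cap\mbox{\rm *}\Max(R)\subseteq\mbox{\rm *}\Var(\fb)$, and obtain the $\preccurlyeq$ direction by pushing a maximal anchor point of a general $\fp\in\mbox{\rm *}\Var(\fb)$ up to a $\mbox{\rm *}$maximal ideal along a saturated chain. The only cosmetic difference is that you invoke Corollary \ref{mu.8}, which packages the repeated application of Theorem \ref{mi.3} to a saturated chain, whereas the paper inlines that chain argument directly; your explicit verification of the compatibility $\phi(\fp;\fb)\circ\phi(\fq;\fp)=\phi(\fq;\fb)$ is the same bookkeeping step the paper carries out. (One small slip: $\dir(\fb)\neq\emptyset$ gives $R_{\mathbf{1}}\subseteq\sqrt{\fb}$ rather than $R_{\mathbf{1}}R\subseteq\fb$; since $\fp$ is prime and $\fp\supseteq\fb$, this still yields $\fp\supseteq R_{\mathbf{1}}R$, so Corollary \ref{mu.8} applies as you claim.)
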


\begin{proof} Let $\fm \in \mbox{\rm *}\Var(\fb) \cap \mbox{\rm
*}\Max(R)$. By Lemma \ref{el.9}, $\dir(\fm) = \{1, \ldots,r\}$;
therefore, by Theorem \ref{el.5}, $\max\left(\bigcup_{i \in
\nn}\nd(H^i_{\fm}(M))\right) = \max\left(\bigcup_{i\in\nn}
\anch^i(\fm,M)\right)$. Another use of Theorem \ref{el.5} therefore
shows that
\begin{align*}
\max\left(\bigcup_{i \in \nn}\phi(\fm;\fb)(\nd(H^i_{\fm}(M)))\right)
&=
\max\left(\bigcup_{i\in\nn} \phi(\fm;\fb)(\anch^i(\fm,M))\right)\\
&\preccurlyeq \max\left(\bigcup_{i\in\nn} \bigcup_{\fp \in \mbox{\rm
*}\Var(\fb)} \phi(\fp;\fb)(\anch^i(\fp,M))\right)\\&=
\max\left(\bigcup_{i \in \nn}\nd(H^i_{\fb}(M))\right).
\end{align*}
We have thus proved that
\[
\max \left( \bigcup_{i\in \nn} \nd(H^i_{\fb}(M))\right) \succcurlyeq
\max \left(\bigcup_{\fm \in \mbox{\rm *}\Var(\fb) \cap \mbox{\rm
*}\Max(R)}\bigcup_{i\in \nn} \phi(\fm;\fb) (\nd(H^i_{\fm}(M)))\right).
\]

Now let $\mathbf{n} \in \Z^{\#\dir(\fb)}$ be a maximal member of
$\bigcup_{i \in \nn}\nd(H^i_{\fb}(M))$. By Theorem \ref{el.5}, there
exist $s \in \nn$ and $\fp \in \mbox{\rm *}\Var(\fb)$ such that
$\mathbf{n} = \phi(\fp;\fb)(\mathbf{w})$ for some $s$th level anchor
point $\mathbf{w}$ of $\fp$ for $M$. Now use Theorem \ref{mi.3}
repeatedly, in conjunction with a saturated chain (of length $t$
say) of $\nn^r$-graded prime ideals of $R$ with $\fp$ as its
smallest term and a $\mbox{\rm *}$maximal ideal $\fm$ as its largest
term: the conclusion is that there exists $\mathbf{v} \in
\anch^{s+t}(\fm,M)$ such that $\phi(\fm;\fp)(\mathbf{v}) =
\mathbf{w}$. Now
$$
\mathbf{n} = \phi(\fp;\fb)(\mathbf{w}) =
\phi(\fp;\fb)(\phi(\fm;\fp)(\mathbf{v})) =
\phi(\fm;\fb)(\mathbf{v}).
$$
But, by Theorem \ref{el.5} again, $\mathbf{v}$ is dominated by
$\max\left(\bigcup_{i\in \nn} \nd(H^i_{\fm}(M))\right)$; it follows
that
\[
\max \left( \bigcup_{i\in \nn} \nd(H^i_{\fb}(M))\right) \preccurlyeq
\max \left(\bigcup_{\fm \in \mbox{\rm *}\Var(\fb) \cap \mbox{\rm
*}\Max(R)}\bigcup_{i\in \nn}
\phi(\fm;\fb)(\nd(H^i_{\fm}(M)))\right).
\]
The desired conclusion follows.
\end{proof}

\begin{cor}\label{el.11} Let the situation be as in Corollary\/ {\rm
\ref{el.10}}, but assume in addition that $(R_{\mathbf{0}},
\fm_{\mathbf{0}})$ is local and that $\fb$ is proper; set $\fm :=
\fm_{\mathbf{0}}\oplus \fc$, where $\fc$ is as defined in Notation\/
{\rm \ref{el.3}}. Then
\[
\max \left( \bigcup_{i\in \nn} \nd(H^i_{\fb}(M))\right) = \max
\left(\bigcup_{i\in \nn} \phi(\fm;\fb)(\nd(H^i_{\fm}(M)))\right).
\]
In particular,
\[
\max \left( \bigcup_{i\in \nn} \nd(H^i_{\fc}(M))\right) = \max
\left(\bigcup_{i\in \nn}\nd(H^i_{\fm}(M))\right).
\]
\end{cor}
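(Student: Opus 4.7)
The plan is to deduce this corollary by specializing Corollary \ref{el.10} to the case where $R_{\mathbf{0}}$ is local. The key observation is that under this local hypothesis, there is a unique $\mbox{\rm *}$maximal ideal of $R$ containing $\fb$, so the union in Corollary \ref{el.10} collapses to a single term.

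More precisely, I will first argue that $\fm := \fm_{\mathbf{0}}\oplus \fc$ is the only $\mbox{\rm *}$maximal ideal of $R$. By Lemma \ref{el.9}, every $\mbox{\rm *}$maximal ideal of $R$ has the form $\fn_{\mathbf{0}}\oplus\fc$ for some maximal ideal $\fn_{\mathbf{0}}$ of $R_{\mathbf{0}}$; since $(R_{\mathbf{0}},\fm_{\mathbf{0}})$ is local, this forces $\fn_{\mathbf{0}} = \fm_{\mathbf{0}}$ and hence $\fn_{\mathbf{0}}\oplus\fc = \fm$. Since $\fb$ is a proper $\nn^r$-graded ideal of $R$, it is contained in some $\mbox{\rm *}$maximal ideal, and therefore $\fb \subseteq \fm$; in other words, $\mbox{\rm *}\Var(\fb)\cap\mbox{\rm *}\Max(R) = \{\fm\}$. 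Applying Corollary \ref{el.10} then gives the first displayed equality immediately.

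For the ``in particular'' statement, I would take $\fb = \fc$, noting that $\fc$ is proper (it does not contain $1 \in R_{\mathbf{0}}$) and $\dir(\fc) = \{1,\dots,r\}\neq\emptyset$, so the hypotheses of the first part apply. Since also $\dir(\fm) = \{1,\dots,r\}$ by Lemma \ref{el.9}, both $\phi(\fc)$ and $\phi(\fm)$ are the identity map $\Z^r\to\Z^r$, and so $\phi(\fm;\fc)$ is the identity. Hence $\phi(\fm;\fc)(\nd(H^i_{\fm}(M))) = \nd(H^i_{\fm}(M))$, and the first equation becomes the second.

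There is essentially no obstacle here: the corollary is a routine specialization, and the only thing to verify carefully is that $\fb$ is automatically contained in the unique $\mbox{\rm *}$maximal ideal, which in turn follows from Lemma \ref{el.9} together with the fact (implicit in Zorn-style arguments in the graded setting) that every proper $\Z^r$-graded ideal is contained in a $\mbox{\rm *}$maximal ideal. The mild bookkeeping point is to confirm that $\phi(\fm;\fc) = \mathrm{id}_{\Z^r}$ from the definition of these maps in Remark \ref{mu.5}, which is immediate once one checks that $\dir(\fc) = \dir(\fm) = \{1,\ldots,r\}$.
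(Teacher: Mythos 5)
Your argument is correct and is exactly the straightforward specialization the paper intends (the paper in fact gives no separate proof for this corollary, treating it as immediate from Corollary~\ref{el.10}). Lemma~\ref{el.9} shows the unique $\mbox{\rm *}$maximal ideal is $\fm = \fm_{\mathbf{0}}\oplus\fc$, and since $\fb$ is proper and $\nn^r$-graded we have $\fb_{\mathbf{0}}\subseteq\fm_{\mathbf{0}}$ (as $R_{\mathbf{0}}$ is local), whence $\fb\subseteq\fm_{\mathbf{0}}\oplus\fc=\fm$ directly — no Zorn-style argument is even needed; with $\mbox{\rm *}\Var(\fb)\cap\mbox{\rm *}\Max(R)=\{\fm\}$, the union in Corollary~\ref{el.10} collapses to give the first displayed equality, and the second follows by taking $\fb=\fc$ and noting $\dir(\fc)=\dir(\fm)=\{1,\ldots,r\}$ forces $\phi(\fm;\fc)=\mathrm{id}_{\Z^r}$.
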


\section{\sc Some vanishing results for multi-graded components of local cohomology modules}
\label{va}

It is well known that, when $r = 1$, if $M$ is a finitely generated
$\Z$-graded $R$-module, then there exists $t \in \Z$ such that
$H^i_{R_+}(M)_n = 0$ for all $i \in \nn$ and all $n \geq t$; it then
follows from \cite[Corollary 2.5]{70} that, if $\fb$ is any graded
ideal of $R$ with $\fb \supseteq R_+$, then $H^i_{\fb}(M)_n = 0$ for
all $i \in \nn$ and all $n \geq t$. One of the aims of this section
is to establish a multi-graded analogue of this result.

\begin{ntn}
\label{lc.2} Throughout this section, we shall be concerned with the
situation where $$R = \bigoplus_{\mathbf{n} \in \nn^r}
R_\mathbf{n}$$ is positively graded; we shall only assume that $R$
is standard when this is explicitly stated.

We shall be concerned with the $\nn^r$-graded ideal $R_+$ of $R$
given (see Notation \ref{el.3}) by
$$
R_+ := \bigoplus_{\stackrel{\scriptstyle \mathbf{n} \in
\nn^r}{\mathbf{n} \geq \mathbf{1}}} R_\mathbf{n}.
$$

Although it is well known (see Hyry \cite[Theorem 1.6]{Hyry99})
that, if $M$ is a finitely generated $\Z^r$-graded $R$-module, then
$H^i_{R_+}(M)_{(n_1, \ldots,n_r)} = 0$ for all $n_1, \ldots, n_r \gg
0$, we have not been able to find in the literature a proof of the
corresponding statement with $R_+$ replaced by an $\nn^r$-graded
ideal $\fb$ that contains $R_+$. We present such a proof below,
because we think it is of interest in its own right.
\end{ntn}

\begin{thm}
\label{lc.3} Suppose that $R = \bigoplus_{\mathbf{n} \in \nn^r}
R_\mathbf{n}$ is positively graded; let $M$ be a finitely generated
$\Z^r$-graded $R$-module. Let $\fb$ be an $\nn^r$-graded ideal of
$R$ such that $\fb \supseteq R_+$. Then there exists $t \in \Z$ such
that
$$H^i_{\fb}(M)_{\mathbf{n}} = 0 \quad \mbox{~for all~} i \in \nn
\mbox{~and all~} \mathbf{n} \geq (t,t,\ldots,t).
$$
\end{thm}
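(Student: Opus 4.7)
The plan is to bootstrap the statement from the special case $\fb = R_+$ (which is the cited Hyry result) using the containment $\fb \supseteq R_+$ and a Grothendieck composite-functor spectral sequence. First, by \cite[Theorem 1.6]{Hyry99} recalled in Notation \ref{lc.2}, there exists $t_0 \in \Z$ such that $H^q_{R_+}(M)_{\mathbf{n}} = 0$ for every $q \in \nn$ and every $\mathbf{n} \geq (t_0, \ldots, t_0)$.

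Next, I exploit $\fb \supseteq R_+$ to get $\fb^n \supseteq R_+^n$ for every $n$, and hence $\Gamma_\fb(L) \subseteq \Gamma_{R_+}(L)$ for every $\Z^r$-graded $R$-module $L$. This yields the functor identity $\Gamma_\fb = \Gamma_\fb \circ \Gamma_{R_+}$. Because the $\mbox{\rm *}$injective indecomposables are the $\mbox{\rm *}E(R/\p)(\mathbf{m})$ and $\Gamma_{R_+}$ simply preserves those with $\p \supseteq R_+$ and kills the rest, $\Gamma_{R_+}$ sends $\mbox{\rm *}$injectives to $\mbox{\rm *}$injectives (which are in particular $\Gamma_\fb$-acyclic). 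Standard Grothendieck composite-functor machinery in the category $\mbox{\rm *}\mathcal{C}^{\Z^r}(R)$ therefore furnishes a first-quadrant spectral sequence
\[
E_2^{p,q} = H^p_\fb(H^q_{R_+}(M)) \;\Longrightarrow\; H^{p+q}_\fb(M).
\]

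The technical heart of the proof is a Key Lemma: if $N$ is any $\Z^r$-graded $R$-module with $N_{\mathbf{n}} = 0$ for all $\mathbf{n} \geq (t, \ldots, t)$, and $\fa$ is any $\nn^r$-graded ideal of $R$, then $H^p_\fa(N)_{\mathbf{n}} = 0$ for all such $\mathbf{n}$ and all $p \in \nn$. To prove this, take homogeneous generators $f_1, \ldots, f_s$ of $\fa$, each of degree $\deg f_i \in \nn^r$ (possible because $R$ is positively graded), and compute $H^*_\fa(N)$ from the \v{C}ech complex on the $f_i$. A typical term is a finite direct sum of localizations $N_{f_{i_1}\cdots f_{i_p}}$; a homogeneous element $x/f^k$ of degree $\mathbf{n}$ in such a localization has $x \in N_{\mathbf{n} + k\deg f}$, and since $\deg f \geq \mathbf{0}$ and $k \geq 0$, the hypothesis $\mathbf{n} \geq (t, \ldots, t)$ forces $\mathbf{n} + k\deg f \geq (t, \ldots, t)$ and therefore $x = 0$. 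So every term of the \v{C}ech complex, and hence its cohomology, vanishes in the required multi-degrees.

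To conclude, apply the Key Lemma to $N = H^q_{R_+}(M)$ with $t = t_0$ to obtain $E_2^{p,q}_{\mathbf{n}} = 0$ for every $\mathbf{n} \geq (t_0, \ldots, t_0)$ and all $p, q \in \nn$. Because the spectral sequence is first-quadrant, each $H^i_\fb(M)$ admits a finite filtration whose subquotients are subquotients of finitely many $E_2^{p,q}$ with $p + q = i$; all of these vanish for $\mathbf{n} \geq (t_0, \ldots, t_0)$, so $H^i_\fb(M)_{\mathbf{n}} = 0$ in this range for every $i$, and $t := t_0$ does the job. The one step requiring genuine care is the Key Lemma: its proof crucially uses positivity of the $\Z^r$-grading to control how localization at homogeneous elements interacts with the multi-graded box bound; the spectral sequence itself is entirely standard.
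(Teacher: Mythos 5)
Your proof is correct and takes a genuinely different route from the paper's. The paper argues by induction on $r$: it introduces three auxiliary $\nn^r$-graded ideals $\fa$, $\fc$, $\fd := \fa + \fc$ with $\fa \cap \fc = \fb$, pushes each forward along a suitable group homomorphism $\Z^r \to \Z^{r-1}$ or $\Z^r \to \Z$ so that the image ideal contains the analogue of $R_+$ in lower rank, invokes the inductive hypothesis for the three auxiliaries, and finishes with a Mayer--Vietoris sequence; that argument is self-contained and does not use Hyry's theorem (only the $r=1$ result of \cite{70}). Your argument instead takes Hyry's vanishing for $R_+$ as input and bootstraps it via the Grothendieck composite-functor spectral sequence for $\Gamma_\fb = \Gamma_\fb \circ \Gamma_{R_+}$ --- valid because $\Gamma_{R_+}$ sends $\mbox{\rm *}$injectives to $\mbox{\rm *}$injectives, which follows from the Goto--Watanabe decomposition together with Lemma \ref{mi.0r} --- plus a clean \v{C}ech observation: if $N_{\mathbf{n}} = 0$ whenever $\mathbf{n} \geq (t,\ldots,t)$ and $\fa$ is $\nn^r$-graded with homogeneous generators of degrees in $\nn^r$, then every term $N_{f_{i_1}\cdots f_{i_p}}$ of the extended \v{C}ech complex vanishes in those same degrees (a homogeneous element $x/f^k$ of degree $\mathbf{n}$ forces $x$ to sit in degree $\mathbf{n}+k\deg f \geq \mathbf{n} \geq (t,\ldots,t)$, where $N$ is zero), and hence so does each $H^p_\fa(N)$. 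Your route is shorter and more conceptual, and the \v{C}ech vanishing lemma is a reusable tool in its own right; the price is that it consumes the $R_+$ case as a black box rather than re-deriving it. The paper's inductive Mayer--Vietoris proof is more combinatorial, but independent of the literature on $H^i_{R_+}(M)$.
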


\begin{proof} We shall prove this by induction on $r$. In the
case where $r = 1$ the result follows from \cite[Corollary 2.5]{70},
as was explained in the introduction to this section.

Now suppose that $r >1$ and that the claim has been proved for
smaller values of $r$. We define three more $\nn^r$-graded ideals
$\fa$, $\fc$ and $\fd$ of $R$, as follows. Set
$$
\fa := \bigoplus_{\mathbf{n} = (n_1, \ldots, n_r) \in
\nn^r}\fa_{\mathbf{n}} \quad \mbox{~where~} \fa_{\mathbf{n}} =
\begin{cases} \fb_{\mathbf{n}} &    \text{if } n_r =
0,\\R_{\mathbf{n}} &    \text{if } n_r > 0\mbox{;}
\end{cases}
$$
$$
\fc := \bigoplus_{\mathbf{n} = (n_1, \ldots, n_r) \in
\nn^r}\fc_{\mathbf{n}} \quad \mbox{~where~} \fc_{\mathbf{n}} =
\begin{cases} \fb_{\mathbf{n}} &    \text{if } (n_1,\ldots,n_{r-1}) \not\geq(1, \ldots,1),\\
R_{\mathbf{n}} &    \text{if } (n_1,\ldots,n_{r-1}) \geq(1,
\ldots,1)\mbox{;}
\end{cases}
$$
and $\fd := \fa + \fc$.

Consider $\fa \cap \fc$: for each $\mathbf{n} = (n_1, \ldots, n_r)
\in \nn^r$, the $\mathbf{n}$th component $(\fa \cap
\fc)_{\mathbf{n}}$ satisfies
$$
(\fa \cap \fc)_{\mathbf{n}} = \fa_{\mathbf{n}} \cap \fc_{\mathbf{n}}
= \begin{cases} \fb_{\mathbf{n}} &    \text{if } n_r = 0 \text{ or }
(n_1,\ldots,n_{r-1}) \not\geq(1, \ldots,1),\\
R_{\mathbf{n}} &    \text{if } n_r > 0 \text{ and
}(n_1,\ldots,n_{r-1}) \geq(1, \ldots,1).
\end{cases}
$$
Since $\fb \supseteq R_+$, we see that $\fa \cap \fc = \fb$.

Let $\sigma : \Z^r \lra \Z^{r-1}$ be the group homomorphism defined
by
$$\sigma ((n_1,\ldots,n_r)) = (n_1 + n_r,\ldots, n_{r-1} + n_r) \quad
\mbox{~for all~} (n_1,\ldots,n_r) \in \Z^r.
$$
Note that, for $(n_1,\ldots,n_r) \in \nn^r$, we have $(n_1 +
n_r,\ldots, n_{r-1} + n_r) \geq \mathbf{1}$ in $\Z^{r-1}$ if and
only if $n_r \geq 1$ or $(n_1,\ldots, n_{r-1}) \geq \mathbf{1}$;
furthermore, if $n_r \geq 1$, then $\fa_{\mathbf{n}} =
R_{\mathbf{n}}$, and if $(n_1,\ldots, n_{r-1}) \geq \mathbf{1}$,
then $\fc_{\mathbf{n}} = R_{\mathbf{n}}$. Let $\mathbf{m} \in
\Z^{r-1}$ with $\mathbf{m} \geq \mathbf{1}$. Therefore, in the
$\nn^{r-1}$-graded ring $R^{\sigma}$, we have
$$(\fd^{\sigma})_{\mathbf{m}} = \bigoplus_{\stackrel{\scriptstyle
\mathbf{n} \in \Z^r}{\sigma(\mathbf{n}) =
\mathbf{m}}}(\fa_{\mathbf{n}} + \fc_{\mathbf{n}}) =
\bigoplus_{\stackrel{\scriptstyle \mathbf{n} \in
\Z^r}{\sigma(\mathbf{n}) = \mathbf{m}}} R_{\mathbf{n}} =
(R^{\sigma})_{\mathbf{m}}.
$$
Thus $\fd^{\sigma} \supseteq \bigoplus_{\mathbf{m} \geq \mathbf{1}}
(R^{\sigma})_\mathbf{m} = (R^{\sigma})_+$.

It therefore follows from the inductive hypothesis that there exists
$\widetilde{t} \in \Z$ such that
$(H^j_{\fd^{\sigma}}(M^{\sigma}))_{\mathbf{h}} = 0$ for all $j \in
\nn$ and all $\mathbf{h} \geq (\widetilde{t},\ldots,\widetilde{t})$
in $\Z^{r-1}$. In view of Remark \ref{lc.1}, this means that
$((H^j_{\fd}(M))^{\sigma})_{\mathbf{h}} = 0$ for all $j \in \nn$ and
all $\mathbf{h} \geq (\widetilde{t},\ldots,\widetilde{t})$ in
$\Z^{r-1}$, so that, for all $j \in \nn$,
$$
H^j_{\fd}(M)_{(n_1, \ldots, n_r)} = 0 \quad \mbox{~whenever~}
(n_1,\ldots, n_{r-1},n_r) \geq \left({\textstyle
\frac{1}{2}}\widetilde{t},\ldots,{\textstyle
\frac{1}{2}}\widetilde{t}, {\textstyle
\frac{1}{2}}\widetilde{t}\,\right) \mbox{~in~} \Z^r.
$$

We now give two similar, but simpler, arguments. Let $\pi : \Z^r
\lra \Z$ be the group homomorphism given by projection onto the
$r$th co-ordinate. Note that, for $\mathbf{n} \in \nn^r$, if
$\pi(\mathbf{n}) \geq 1$, then $\fa_{\mathbf{n}} = R_{\mathbf{n}}$.
Therefore $\fa^{\pi} \supseteq (R^{\pi})_+$. It therefore follows
from the case where $r = 1$ that there exists $\overline{t} \in \Z$
such that $(H^j_{\fa^{\pi}}(M^{\pi}))_n = 0$ for all $j \in \nn$ and
all $n \geq \overline{t}$. In view of Remark \ref{lc.1}, this means
that $((H^j_{\fa}(M))^{\pi})_n = 0$ for all $j \in \nn$ and all $n
\geq \overline{t}$, that is,
$$
H^j_{\fa}(M)_{(n_1, \ldots, n_r)} = 0 \quad \mbox{~whenever~} j \in
\nn \mbox{~and~} n_r \geq \overline{t}.
$$

Next, let $\theta : \Z^r \lra \Z^{r-1}$ be the group homomorphism
defined by
$$\theta ((n_1,\ldots,n_r)) = (n_1,\ldots, n_{r-1}) \quad \mbox{~for all~}
(n_1,\ldots,n_r) \in \Z^r.$$ Note that, if $\mathbf{n} \in \Z^r$ has
$\theta(\mathbf{n}) \geq \mathbf{1}$ in $\Z^{r-1}$, then
$\fc_{\mathbf{n}} = R_{\mathbf{n}}$. Therefore, for $\mathbf{m} \in
\Z^{r-1}$ with $\mathbf{m} \geq \mathbf{1}$, we have
$(\fc^{\theta})_{\mathbf{m}} = (R^{\theta})_{\mathbf{m}}$. This
means that, in the $\nn^{r-1}$-graded ring $R^{\theta}$, we have
$\fc^{\theta} \supseteq \bigoplus_{\mathbf{m} \geq \mathbf{1}}
(R^{\theta})_\mathbf{m} = (R^{\theta})_+$.

It therefore follows from the inductive hypothesis that there exists
$\widehat{t} \in \Z$ such that
$(H^j_{\fc^{\theta}}(M^{\theta}))_{\mathbf{h}} = 0$ for all $j \in
\nn$ and all $\mathbf{h} \geq (\widehat{t},\ldots,\widehat{t})$ in
$\Z^{r-1}$. In view of Remark \ref{lc.1}, this means that
$((H^j_{\fc}(M))^{\theta})_{\mathbf{h}} = 0$ for all $j \in \nn$ and
all $\mathbf{h} \geq (\widehat{t},\ldots,\widehat{t})$ in
$\Z^{r-1}$, so that
$$
H^j_{\fc}(M)_{(n_1, \ldots, n_r)} = 0 \quad \mbox{~whenever~} j \in
\nn \mbox{~and~} (n_1,\ldots, n_{r-1}) \geq
(\widehat{t},\ldots,\widehat{t}).
$$

We recall that $\fa \cap \fc = \fb$. There is an exact
Mayer--Vietoris sequence (in the category $\mbox{\rm
*}\mathcal{C}^{\Z^r}(R)$)
\[
\begin{picture}(260,95)(-150,-45)
\put(-155,40){\makebox(0,0){$ 0 $}} \put(-95,40){\makebox(0,0){$
H^0_{\fd}(M) $}} \put(0,40){\makebox(0,0){$ H^0_{\mathfrak{c}}(M)
\oplus H^0_{\fa}(M) $}} \put(95,40){\makebox(0,0){$ H^0_{\fb}(M) $}}
\put(-95,20){\makebox(0,0){$ H^1_{\fd}(M) $}}
\put(0,20){\makebox(0,0){$ H^1_{\mathfrak{c}}(M) \oplus H^1_{\fa}(M)
$}} \put(95,20){\makebox(0,0){$ H^1_{\fb}(M) $}}
\put(-95,0){\makebox(0,0){$ \cdots $}} \put(95,0){\makebox(0,0){$
\cdots $}} \put(-95,-20){\makebox(0,0){$ H^i_{\fd}(M) $}}
\put(0,-20){\makebox(0,0){$ H^i_{\mathfrak{c}}(M) \oplus
H^i_{\fa}(M) $}} \put(95,-20){\makebox(0,0){$ H^i_{\fb}(M) $}}
\put(-95,-40){\makebox(0,0){$ H^{i+1}_{\fd}(M) $}}
\put(0,-40){\makebox(0,0){$ \cdots\mbox{.} $}}
\put(-150,40){\vector(1,0){30}} \put(-73,40){\vector(1,0){30}}
\put(43,40){\vector(1,0){30}} \put(-150,20){\vector(1,0){30}}
\put(-73,20){\vector(1,0){30}} \put(43,20){\vector(1,0){30}}
\put(-150,0){\vector(1,0){30}} \put(-150,-20){\vector(1,0){30}}
\put(-73,-20){\vector(1,0){30}} \put(43,-20){\vector(1,0){30}}
\put(-150,-40){\vector(1,0){30}} \put(-70,-40){\vector(1,0){30}}
\end{picture}
\]
It now follows from this Mayer--Vietoris sequence that, if we set $t
:= \max\{{\textstyle
\frac{1}{2}}\widetilde{t},\widehat{t},\overline{t}\}$, then
$$
H^j_{\fb}(M)_{(n_1,\ldots, n_r)} = 0 \quad \mbox{~whenever~} j \in
\nn \mbox{~and~} (n_1,\ldots,n_r) \geq (t,\ldots,t).
$$
This completes the inductive step, and the proof.
\end{proof}

We can deduce from the above Theorem \ref{lc.3} a vanishing result
for multi-graded components of local cohomology modules with respect
to a multi-graded ideal that has both directions and non-directions.

\begin{cor}\label{va.4} Suppose that $R = \bigoplus_{\mathbf{n} \in \nn^r}
R_\mathbf{n}$ is positively graded and standard; let $M$ be a
finitely generated $\Z^r$-graded $R$-module. Let $\fb$ be an
$\nn^r$-graded ideal of $R$ that has some directions and some
non-directions: to be precise, and for ease of notation, suppose
that $\dir(\fb) = \{m+1,\ldots,r\}$, where $1 \leq m < r$. Then
there exists $t \in \Z$ such that, for all $j \in \nn$, and for all
$\mathbf{n} = (n_1, \ldots,n_r) \in \Z^r$ for which $(n_1,
\ldots,n_{m},n_{m+1}+ \cdots +n_r) \geq (t,\ldots,t)$ in $\Z^{m+1}$,
we have $H^j_{\fb}(M)_{\mathbf{n}}= 0$.
\end{cor}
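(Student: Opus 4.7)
The plan is to reduce the claim to Theorem \ref{lc.3} by means of a regrading. I define the group homomorphism $\phi : \Z^r \lra \Z^{m+1}$ by
$$
\phi((n_1, \ldots, n_r)) := (n_1, \ldots, n_m, n_{m+1} + n_{m+2} + \cdots + n_r),
$$
which collapses the $\fb$-direction coordinates into a single sum. Since $\phi$ sends $\nn^r$ into $\nn^{m+1}$, one checks easily that the regraded ring $R^{\phi}$ (in the sense of Definition \ref{nt.0}) is still positively graded, and, taking the same homogeneous generators, $M^{\phi}$ is a finitely generated $\Z^{m+1}$-graded $R^{\phi}$-module.

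The key algebraic input is the containment $(\sqrt{\fb})^{\phi} \supseteq (R^{\phi})_+$. To verify this, fix $\mathbf{h} = (h_1, \ldots, h_{m+1}) \in \nn^{m+1}$ with $\mathbf{h} \geq \mathbf{1}$ and any $\mathbf{n} \in \nn^r$ with $\phi(\mathbf{n}) = \mathbf{h}$. From $h_{m+1} \geq 1$ it follows that $n_j \geq 1$ for some $j \in \{m+1, \ldots, r\}$; since $R_{\mathbf{e}_j} \subseteq \sqrt{\fb}$ by hypothesis and $R$ is standard, Lemma \ref{mu.2} gives $R_{\mathbf{n}} \subseteq \sqrt{\fb}$. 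Summing over all such $\mathbf{n}$ yields $(R^{\phi})_{\mathbf{h}} \subseteq (\sqrt{\fb})^{\phi}$, as required.

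Applying Theorem \ref{lc.3} to the positively graded ring $R^{\phi}$, the finitely generated module $M^{\phi}$ and the ideal $(\sqrt{\fb})^{\phi}$, I obtain $t \in \Z$ with $H^j_{(\sqrt{\fb})^{\phi}}(M^{\phi})_{\mathbf{h}} = 0$ for every $j \in \nn$ and every $\mathbf{h} \geq (t, \ldots, t)$ in $\Z^{m+1}$. By Remark \ref{lc.1} this module is isomorphic to $((H^j_{\sqrt{\fb}}(M))^{\phi})_{\mathbf{h}} = \bigoplus_{\phi(\mathbf{n}) = \mathbf{h}} H^j_{\fb}(M)_{\mathbf{n}}$, where I have used the standard identity $H^j_{\fb} = H^j_{\sqrt{\fb}}$. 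Vanishing of this direct sum for every $\mathbf{h} \geq (t, \ldots, t)$ is exactly the conclusion that $H^j_{\fb}(M)_{\mathbf{n}} = 0$ whenever $(n_1, \ldots, n_m, n_{m+1} + \cdots + n_r) \geq (t, \ldots, t)$.

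The main obstacle is the inclusion $(\sqrt{\fb})^{\phi} \supseteq (R^{\phi})_+$: because the generators $R_{\mathbf{e}_j}$ with $j > m$ are only known to lie in $\sqrt{\fb}$ rather than in $\fb$ itself, one is forced to pass to the radical before regrading, and it is standardness of $R$ (via Lemma \ref{mu.2}) that then promotes membership of the generating components to membership of all the relevant graded components. Once this containment is in hand, the rest is a routine application of the regrading machinery of Definition \ref{nt.0} and Remark \ref{lc.1}, together with Theorem \ref{lc.3}.
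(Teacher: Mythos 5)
Your proof is correct and follows essentially the same route as the paper's: define the collapsing homomorphism $\phi : \Z^r \lra \Z^{m+1}$, use Lemma \ref{mu.2} together with standardness to show that the regraded ideal contains $(R^{\phi})_+$, and then apply Theorem \ref{lc.3} and Remark \ref{lc.1}. The only cosmetic difference is that the paper assumes $\fb = \sqrt{\fb}$ at the outset (justified by $H^j_{\fb} = H^j_{\sqrt{\fb}}$), whereas you carry $\sqrt{\fb}$ through explicitly and invoke that identity at the end; the substance is identical.
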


\begin{note} As $\fb$ has some directions and $R$ is standard, it
follows from Lemma \ref{mu.2} that $R_+ \subseteq \fb$, so that
Theorem \ref{lc.3} yields a $t' \in \Z$ such that
$H^i_{\fb}(M)_{\mathbf{n}} = 0$ for all $\mathbf{n} \geq
(t',\ldots,t')$. Thus, when $m = r-1$, the conclusion of Corollary
\ref{va.4} already follows from Theorem \ref{lc.3}.
\end{note}

\begin{proof} Without loss of generality, we can, and do, assume
that $\fb = \sqrt{\fb}$.

Let $\phi : \Z^r \lra \Z^{m+1}$ be the group homomorphism defined by
$$\phi ((n_1,\ldots,n_r)) = (n_1,\ldots, n_{m},n_{m+1}+ \cdots +n_r) \quad \mbox{~for all~}
(n_1,\ldots,n_r) \in \Z^r.$$ Let $\mathbf{n}= (n_1,\ldots,n_r) \in
\nn^r$ be such that $\phi(\mathbf{n}) \geq \mathbf{1}$ in
$\Z^{m+1}$. Then $n_{m+1}+ \cdots +n_r \geq 1$, so that one of
$n_{m+1}, \ldots, n_r$ is positive. Now $R_{\mathbf{e}_i} \subseteq
\sqrt{\fb} = \fb$ for all $i = m+1, \ldots, r$, and since
$\mathbf{n} \geq \mathbf{e}_i$ for one of these $i$s, it follows
from Lemma \ref{mu.2} that $\fb \supseteq R_{\mathbf{n}}$. It
therefore follows that, in the $\nn^{m+1}$-graded ring $R^{\phi}$,
we have $\fb^{\phi} \supseteq \bigoplus_{\mathbf{m} \geq \mathbf{1}}
(R^{\phi})_\mathbf{m} = (R^{\phi})_+$.

We can now appeal to Theorem \ref{lc.3} to deduce that there exists
$t \in \Z$ such that $(H^j_{\fb^{\phi}}(M^{\phi}))_{\mathbf{h}} = 0$
for all $j \in \nn$ and all $\mathbf{h} \geq (t,\ldots,t)$ in
$\Z^{m+1}$. In view of Remark \ref{lc.1}, this means that
$((H^j_{\fb}(M))^{\phi})_{\mathbf{h}} = 0$ for all $j \in \nn$ and
all $\mathbf{h} \geq (t,\ldots,t)$ in $\Z^{m+1}$, so that
$$
H^j_{\fb}(M)_{(n_1, \ldots, n_r)} = 0 \quad \mbox{~whenever~} j \in
\nn \mbox{~and~} (n_1,\ldots, n_{m},n_{m+1}+ \cdots +n_r) \geq
(t,\ldots,t).
$$
\end{proof}

One of the reasons why we consider that Theorem \ref{lc.3} is of
interest in its own right concerns the structure of the
(multi-)graded components $H^i_{\fb}(M)_{\mathbf{n}}~(\mathbf{n} \in
\Z^r)$ as modules over $R_{\mathbf{0}}$ (the hypotheses and notation
here are as in Theorem \ref{lc.3}). The example in \cite[Exercise
15.1.7]{LC} shows that these graded components need not be finitely
generated $R_{\mathbf{0}}$-modules; however, it is always the case
that (for a finitely generated $\Z^r$-graded $R$-module $M$) the
(multi-)graded components $H^i_{R_+}(M)_{\mathbf{n}}~(\mathbf{n} \in
\Z^r)$ of the $i$th local cohomology module of $M$ with respect to
$R_+$ are finitely generated $R_{\mathbf{0}}$-modules (for all $i
\in \nn$), as we now show.

\begin{thm}
\label{va.5} Suppose that $R = \bigoplus_{\mathbf{n} \in \nn^r}
R_\mathbf{n}$ is positively graded; let $M$ be a finitely generated
$\Z^r$-graded $R$-module. Then $H^i_{R_+}(M)_{\mathbf{n}}$ is a
finitely generated $R_{\mathbf{0}}$-module, for all $i \in \nn$ and
all $\mathbf{n} \in \Z^r$.
\end{thm}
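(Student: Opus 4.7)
The plan is to prove this by induction on $i$, using the vanishing statement of Theorem \ref{lc.3} to reduce each graded component to one inside the vanishing region by walking along an arithmetic progression generated by a single homogeneous non-zero-divisor. For the base case $i=0$, the submodule $\Gamma_{R_+}(M) \subseteq M$ is finitely generated over $R$ since $R$ is Noetherian; moreover, $R$ is a finitely generated $R_\mathbf{0}$-algebra (any Noetherian positively graded ring is, as one sees by choosing finitely many homogeneous generators of the ideal $\bigoplus_{\mathbf{n}>\mathbf{0}} R_\mathbf{n}$ and inducting on $n_1+\cdots+n_r$), so every graded component of a finitely generated $\Z^r$-graded $R$-module is finitely generated over the Noetherian ring $R_\mathbf{0} \cong R/\bigoplus_{\mathbf{n}>\mathbf{0}} R_\mathbf{n}$; in particular $\Gamma_{R_+}(M)_\mathbf{n}$ is a finitely generated $R_\mathbf{0}$-module for every $\mathbf{n}$.

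For the inductive step, assume the result is known in cohomological degree $i-1$ for every finitely generated $\Z^r$-graded $R$-module. Since $H^i_{R_+}(M) \cong H^i_{R_+}(M/\Gamma_{R_+}(M))$ for $i \geq 1$, I may assume $\Gamma_{R_+}(M) = 0$. Then no associated prime of $M$ contains $R_+$, and since associated primes of a graded module are graded, graded prime avoidance yields a homogeneous element $x \in R_+$ of some degree $\mathbf{d} \geq \mathbf{1}$ that is a non-zero-divisor on $M$. The short exact sequence $0 \to M(-\mathbf{d}) \stackrel{x}{\to} M \to M/xM \to 0$ produces, in the degree-$(\mathbf{n}+\mathbf{d})$ graded component of its long exact sequence, an exact sequence of $R_\mathbf{0}$-modules
\[
H^{i-1}_{R_+}(M/xM)_{\mathbf{n}+\mathbf{d}} \longrightarrow H^i_{R_+}(M)_{\mathbf{n}} \stackrel{x}{\longrightarrow} H^i_{R_+}(M)_{\mathbf{n}+\mathbf{d}}
\]
whose leftmost term is finitely generated over $R_\mathbf{0}$ by the inductive hypothesis applied to $M/xM$.

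Now invoke Theorem \ref{lc.3}: there is $t \in \Z$ with $H^i_{R_+}(M)_\mathbf{m} = 0$ whenever $\mathbf{m} \geq (t,\ldots,t)$. Given an arbitrary $\mathbf{n} \in \Z^r$, since every coordinate of $\mathbf{d}$ is at least $1$, I can pick $k \in \nn$ with $\mathbf{n}+k\mathbf{d} \geq (t,\ldots,t)$, and then $H^i_{R_+}(M)_{\mathbf{n}+k\mathbf{d}} = 0$. A finite descending induction on $j$ from $k$ down to $0$, applied to the displayed exact sequence with $\mathbf{n}+j\mathbf{d}$ in place of $\mathbf{n}$, then shows that each $H^i_{R_+}(M)_{\mathbf{n}+j\mathbf{d}}$ is finitely generated over $R_\mathbf{0}$: at each step the image of multiplication by $x$ sits inside the (by inductive assumption finitely generated) module $H^i_{R_+}(M)_{\mathbf{n}+(j+1)\mathbf{d}}$, hence is itself finitely generated since $R_\mathbf{0}$ is Noetherian, while the kernel is a quotient of the finitely generated $R_\mathbf{0}$-module $H^{i-1}_{R_+}(M/xM)_{\mathbf{n}+(j+1)\mathbf{d}}$. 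Taking $j=0$ gives the conclusion.

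The main obstacle is organising the reduction so that the finite generation of one graded component can be transferred to another. The key observation making this possible is that, unlike in Corollary \ref{va.4} or the setting of an arbitrary $\nn^r$-graded ideal, the non-zero-divisor $x$ produced here has degree $\mathbf{d}$ with \emph{every} coordinate strictly positive, so the sequence $\mathbf{n},\mathbf{n}+\mathbf{d},\mathbf{n}+2\mathbf{d},\ldots$ is guaranteed to enter the vanishing region described by Theorem \ref{lc.3} after finitely many steps; this makes the descending induction terminate and couples the finite-generation statement in degree $\mathbf{n}$ to the inductive hypothesis on $H^{i-1}_{R_+}(M/xM)$.
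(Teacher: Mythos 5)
Your outer induction on $i$, the replacement of $M$ by $M/\Gamma_{R_+}(M)$, the short exact sequence $0 \to M(-\mathbf{d}) \to M \to M/xM \to 0$ arising from a homogeneous non-zero-divisor $x \in R_+$, and the use of Theorem \ref{lc.3} to make the descent along $\mathbf{n}, \mathbf{n}+\mathbf{d}, \mathbf{n}+2\mathbf{d},\ldots$ terminate are exactly the devices the paper uses. The gap is in the one sentence where you invoke ``graded prime avoidance'' to produce the homogeneous element $x$. Theorem \ref{va.5} assumes only that $R$ is positively $\nn^r$-graded, \emph{not} standard, and in that generality homogeneous prime avoidance for $R_+$ can fail once $r \geq 2$. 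The standard mechanism for making prime avoidance yield a \emph{homogeneous} element is to raise the homogeneous witnesses $a_j \in R_+ \setminus \fp_j$ to suitable powers so that their degrees coincide; in $\Z^r$ this is impossible when the degrees involved are not $\Q$-proportional, since no positive multiples of, say, $(2,1)$ and $(1,2)$ are equal. Concretely, take $r = 2$ and $R = k[X,Y,Z]/(XY - Z^3)$ with $\deg X = (2,1)$, $\deg Y = (1,2)$, $\deg Z = (1,1)$. This is a positively $\nn^2$-graded Noetherian domain with $R_{(1,0)} = R_{(0,1)} = 0$ (so it is not standard). Here $R_+ = (X,Y,Z)$, and $\fp_1 := (X,Z)$, $\fp_2 := (Y,Z)$ are $\Z^2$-graded primes with $R_+ \not\subseteq \fp_1$ and $R_+ \not\subseteq \fp_2$; yet every $\Z^2$-homogeneous element of $R_+$ (that is, every nonconstant monomial in $X,Y,Z$) lies in $\fp_1 \cup \fp_2$. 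Thus for the $R_+$-torsion-free module $M := R/\fp_1 \oplus R/\fp_2$, with $\Ass_R M = \{\fp_1, \fp_2\}$, there is no $\Z^2$-homogeneous element of $R_+$ that is a non-zero-divisor on $M$, and your construction cannot get started.

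The paper avoids this difficulty by running a second induction, inside the induction on $i$, on $b(M) := \#\bigl(\Ass_R M \setminus \mbox{\rm *}\Var(R_+)\bigr)$. When $b(M) \geq 2$ one picks $\fp, \fq \in \Ass_R M \setminus \mbox{\rm *}\Var(R_+)$ with $\fp \not\subseteq \fq$ and splits $M$ along $\Gamma_{\fp}(M)$: both $\Gamma_{\fp}(M)$ and $M/\Gamma_{\fp}(M)$ have strictly smaller $b$-value, and the long exact sequence of local cohomology gives the claim for $M$. When $b(M) = 1$, after passing to $\overline{M} := M/\Gamma_{R_+}(M)$ there is a \emph{single} associated prime $\fp$; since $R_+$ and $\fp$ are both graded and $R_+ \not\subseteq \fp$, a $\Z^r$-homogeneous element $a \in R_+ \setminus \fp$ exists trivially, with no avoidance lemma needed, and from there the argument runs exactly as you describe. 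So the idea of your proof is sound, but it requires the additional reduction to the one-associated-prime case to be valid without a standardness hypothesis; as written, the step producing $x$ is unjustified and can fail.
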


\begin{note} In the case where $r= 1$, this result is well known:
see \cite[Proposition 15.1.5]{LC}.
\end{note}

\begin{proof} We use induction on $i$. When $i = 0$, the claim is
immediate from the fact that $H^0_{R_+}(M)$ is isomorphic to a
submodule of $M$, and so is finitely generated. So suppose that $i
> 0$ and that the claim has been proved for smaller values of $i$,
for all finitely generated $\Z^r$-graded $R$-modules.

Recall that all the associated prime ideals of $M$ are
$\nn^r$-graded. Set $B(M) := \Ass_R(M) \setminus \mbox{\rm *}\Var
(R_+)$, and denote $\#B(M)$ by $b(M)$; we shall argue by induction
on $b(M)$. If $b(M) = 0$, then $M$ is $R_+$-torsion, so that
$H^i_{R_+}(M) = 0$ and the desired result is clear in this case.

Now suppose that $b(M) = 1$: let $\fp$ be the unique member of
$B(M)$. Set $\overline{M} := M/\Gamma_{R_+}(M)$. We can use the long
exact sequence of local cohomology modules induced by the exact
sequence $$0 \lra \Gamma_{R_+}(M) \lra M \lra \overline{M} \lra 0,$$
together with the fact that $H^j_{R_+}(\Gamma_{R_+}(M)) = 0$ for all
$j \in \N$, to see that, in order to complete the proof in this
case, it is sufficient for us to prove the result for
$\overline{M}$. Now $\overline{M}$ is $R_+$-torsion-free, and $\Ass
(\overline{M}) = \{\fp\}$. (See \cite[Exercise 2.1.12]{LC}.) There
exists a $\Z^r$-homogeneous element $a \in R_+ \setminus \fp$; note
that $a$ is a non-zero-divisor on $\overline{M}$. Let the degree of
$a$ be $\mathbf{v} = (v_1, \ldots,v_r)$, and note that $v_j>0$ for
all $j = 1, \ldots, r$. By Theorem \ref{lc.3}, there exists $t \in
\Z$ such that $H^j_{R_+}(\overline{M})_{\mathbf{n}} = 0$ for all
$\mathbf{n} \geq (t,t, \ldots,t)$.

Let $\mathbf{n} = (n_1, \ldots, n_r) \in \Z^r$. Since $v_j>0$ for
all $j = 1, \ldots, r$, there exists $w \in \N$ such that $n_j +
v_jw \geq t$ for all $j = 1, \ldots, r$. The exact sequence $$0 \lra
\overline{M} \stackrel{a^w}{\lra} \overline{M}(w\mathbf{v}) \lra
\left(\overline{M}/a^w\overline{M}\right)(w\mathbf{v}) \lra 0$$
induces an exact sequence of $R_{\mathbf{0}}$-modules
$$
H^{i-1}_{R_+}(\overline{M}/a^w\overline{M})_{\mathbf{n} +
w\mathbf{v}} \lra H^{i}_{R_+}(\overline{M})_{\mathbf{n}} \lra
H^{i}_{R_+}(\overline{M})_{\mathbf{n} + w\mathbf{v}},
$$
and since $w$ was chosen to ensure that the rightmost term in this
sequence is zero, it follows from the inductive hypothesis that
$H^{i}_{R_+}(\overline{M})_{\mathbf{n}}$ is a finitely generated
$R_{\mathbf{0}}$-module. This completes the proof in the case where
$b(M) = 1$.

Now suppose that $b(M) = b > 1$ and that it has been proved that all
the graded components of $H^i_{R_+}(L)$ are finitely generated
$R_{\mathbf{0}}$-modules for all choices of finitely generated
$\Z^r$-graded $R$-module $L$ with $b(L) < b$. Let $\fp, \fq \in
B(M)$ with $\fp \neq \fq$: suppose, for the sake of argument, that
$\fp \not\subseteq \fq$. Consider the $\fp$-torsion submodule
$\Gamma_{\fp}(M)$ of $M$. By \cite[Exercise 2.1.12]{LC},
$\Ass(\Gamma_{\fp}(M))$ and $\Ass(M/\Gamma_{\fp}(M))$ are disjoint
and $ \Ass M = \Ass(\Gamma_{\fp}(M)) \cup \Ass(M/\Gamma_{\fp}(M)). $
Now $\fp \in \Ass(\Gamma_{\fp}(M))$ and $\fq \not\in
\Ass(\Gamma_{\fp}(M))$; hence $b(\Gamma_{\fp}(M)) < b$ and
$b(M/\Gamma_{\fp}(M)) < b$. Therefore, by the inductive hypothesis,
both $H^i_{R_+}(\Gamma_{\fp}(M))_{\mathbf{n}}$ and
$H^i_{R_+}(M/\Gamma_{\fp}(M))_{\mathbf{n}}$ are finitely generated
$R_{\mathbf{0}}$-modules, for all $\mathbf{n} \in \Z^r$. We can now
use the long exact sequence of local cohomology modules (with
respect to $R_+$) induced from the exact sequence $0 \lra
\Gamma_{\fp}(M) \lra M \lra M/\Gamma_{\fp}(M) \lra 0$ to deduce that
$H^i_{R_+}(M)_{\mathbf{n}}$ is a finitely generated
$R_{\mathbf{0}}$-module for all $\mathbf{n} \in \Z^r$. The result
follows.
\end{proof}

\section{\sc A multi-graded analogue of Marley's work on finitely graded local cohomology modules}
\label{tm}

As was mentioned in the Introduction, the purpose of this section is
to obtain some multi-graded analogues of results about finitely
graded local cohomology modules that were proved, in the case where
$r = 1$, by Marley in \cite{Marle95}. We shall present a
multi-graded analogue of one of Marley's results and some extensions
of that analogue.

\begin{ntn}\label{tm.1} Throughout this section, we shall be concerned with the
situation where $R = \bigoplus_{\mathbf{n} \in \nn^r} R_\mathbf{n}$
is positively graded and standard, and we shall let $M
=\bigoplus_{\mathbf{n} \in \Z^r} M_\mathbf{n}$ be a $\Z^r$-graded
$R$-module. Also, $\fb$ will always denote an $\nn^r$-graded ideal
of $R$.

For $\mathbf{n} = (n_1, \ldots, n_r) \in \nn^r$, we shall denote
$\{i \in \{1, \ldots, r\}: n_i \neq 0\}$ by
$\mathcal{P}(\mathbf{n})$.
\end{ntn}

\begin{defi}\label{tm.2} An $r$-tuple $\mathbf{n} \in \Z^r$ is
called a {\em supporting degree of $M$\/} precisely when
$M_{\mathbf{n}} \neq 0$; we denote the set of all supporting degrees
of $M$ by $\mathcal{S}(M)$.
\end{defi}

Note that Theorem \ref{lc.3} imposes substantial restrictions on
$\mathcal{S}(H^i_{\fb}(M))$ when ($i \in \nn$ and) $\fb \supseteq
R_+$. The example below is included as motivation for the
introduction of some notation.

\begin{ex}
\label{mpbex} Let $k$ be an algebraically closed field and let $A =
k \oplus A_1 \oplus \cdots \oplus A_m \oplus \cdots$ and $B =
k\oplus B_1 \oplus \cdots \oplus B_n \oplus \cdots$ be two normal
Noetherian standard $\nn$-graded $k$-algebra domains with $w := \dim
A > 1$ and $v := \dim B > 1$. We consider the $\nn^2$-graded
$k$-algebra
$$ R := A \otimes_kB = \bigoplus_{(m,n) \in \nn^2} A_m\otimes_k
B_n.$$ Clearly $R = k[R_{(1,0)},R_{(0,1)}]$ is positively graded and
standard, and, as a finitely generated $k$-algebra, is Noetherian.
By \cite[Chapter III, \S 15, Theorem 40, Corollary 1]{ZS}, $R$ is
again an integral domain.  Observe that $R_+ = R_{(1,1)}R = A_+
\otimes_kB_+$. As $A$ and $B$ are normal and their dimensions exceed
$1$, we have $H^i_{A_+}(A) = H^i_{B_+}(B) = 0$ for $i = 0,1$. The
K\"unneth relations for tensor products (see \cite{Fumas99} or
\cite[Theorem 10.1]{Macla63}) yield, for each $i \in \nn$, an
isomorphism of $\Z^2$-graded $R$ modules
$$
H^i_{R_+}(R) \cong \left(A \otimes_k H^i_{B_+}(B)\right) \oplus
\left(H^i_{A_+}(A)\otimes_k B\right) \oplus \left(
\bigoplus_{\stackrel{\scriptstyle j,l \in \N\setminus\{1\}}{j+l =
i+1}}\left(H^j_{A_+}(A)\otimes_kH^l_{B_+}(B)\right)\right).
$$
As $\mathcal{S}(A) = \mathcal{S}(B) = \nn$, it follows that, for
each $i \in \nn$,
$$
\mathcal{S}(H^i_{R_+}(R)) = \left( \nn \times
\mathcal{S}(H^i_{B_+}(B))\right) \cup \left(
\mathcal{S}(H^i_{A_+}(A))\times \nn\right) \cup \left(
\bigcup_{\stackrel{\scriptstyle j,l \in \N\setminus\{1\}}{j+l =
i+1}}\left(\mathcal{S}(H^j_{A_+}(A))\times
\mathcal{S}(H^l_{B_+}(B))\right)\right).
$$
Observe, in particular, that $H^i_{R_+}(R) = 0$ for $i = 0,1$ and
for all $i \geq w + v$.

Appropriate choices for $A$ and $B$ yield many examples for $R$. We
shall just concentrate on a class of examples obtained by this
procedure when $A$ and $B$ are chosen in a particular way, which we
now describe. We can use \cite[Proposition (2.13)]{Brodm01}, in
conjunction with the Serre--Grothendieck correspondence (see
\cite[20.4.4]{LC}), to choose the algebra $A$ (as above) so that,
for a prescribed set $W \subseteq \{2, \ldots, w-1\}$, we have
$$
\mathcal{S}(H^i_{A_+}(A)) = \begin{cases} \emptyset & \text{for
all~} i \in \nn \setminus (W \cup\{w\}),\\ \{0\} & \text{for all~} i
\in W,\\ \{k \in \Z : k < 0\} & \text{for~} i = w. \end{cases}
$$
Similarly, for a prescribed set $V \subseteq \{2, \ldots, v-1\}$, we
choose $B$ (as above) so that
$$
\mathcal{S}(H^i_{B_+}(B)) = \begin{cases} \emptyset & \text{for
all~} i \in \nn \setminus (V \cup\{v\}),\\ \{0\} & \text{for all~} i
\in V,\\ \{k \in \Z : k < 0\} & \text{for~} i = v. \end{cases}
$$
With such a choice of $A$ for $w = 5$ and $W = \{2\}$, and such a
choice of $B$ for $v = 5$ and $V = \{3\}$, the sets of supporting
degrees $\mathcal{S}(H^i_{R_+}(R))$ for $i = 2,3,4,5$ are as in
Figure 1 below. \thinlines
\begin{center}

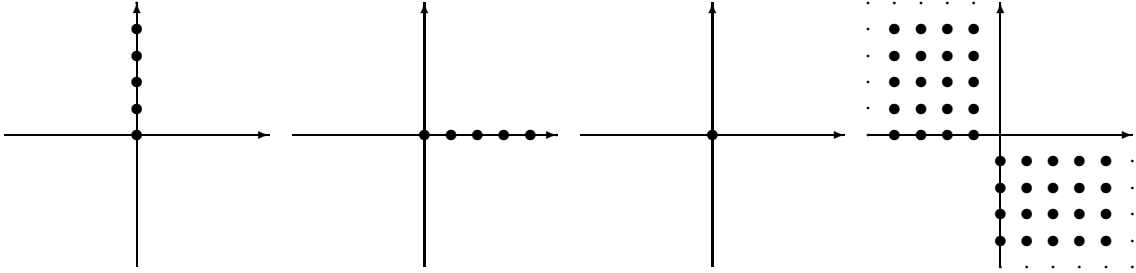
\begin{figure}[h]\begin{picture}(100,100)(-50,-50) \put(-50,0){\vector(1,0){100}}
\put(0,-50){\vector(0,1){100}}
\put(0,0){\circle*{4}}\put(0,10){\circle*{4}}\put(0,20){\circle*{4}}
\put(0,30){\circle*{4}}\put(0,40){\circle*{4}}\put(0,50){\circle*{1}}
\end{picture}\;\;
%\caption{$\mathcal{S}(H^2_{R_+}(R))$}\end{figure}
%\begin{figure}
\begin{picture}(100,100)(-50,-50)
\put(-50,0){\vector(1,0){100}} \put(0,-50){\vector(0,1){100}}
\put(0,0){\circle*{4}}\put(10,0){\circle*{4}}\put(20,0){\circle*{4}}
\put(30,0){\circle*{4}}\put(40,0){\circle*{4}}\put(50,0){\circle*{1}}
\end{picture}\;\;
%\caption{$\mathcal{S}(H^3_{R_+}(R))$}\end{figure}
%\begin{figure}
\begin{picture}(100,100)(-50,-50)
\put(-50,0){\vector(1,0){100}} \put(0,-50){\vector(0,1){100}}
\put(0,0){\circle*{4}}
\end{picture}\;\;
%\caption{$\mathcal{S}(H^4_{R_+}(R))$}\end{figure}
%\begin{figure}
\begin{picture}(100,100)(-50,-50)
\put(-50,0){\vector(1,0){100}} \put(0,-50){\vector(0,1){100}}
\put(-10,0){\circle*{4}}\put(-10,10){\circle*{4}}\put(-10,20){\circle*{4}}
\put(-10,30){\circle*{4}}\put(-10,40){\circle*{4}}\put(-10,50){\circle*{1}}
\put(-20,0){\circle*{4}}\put(-20,10){\circle*{4}}\put(-20,20){\circle*{4}}
\put(-20,30){\circle*{4}}\put(-20,40){\circle*{4}}\put(-20,50){\circle*{1}}
\put(-30,0){\circle*{4}}\put(-30,10){\circle*{4}}\put(-30,20){\circle*{4}}
\put(-30,30){\circle*{4}}\put(-30,40){\circle*{4}}\put(-30,50){\circle*{1}}
\put(-40,0){\circle*{4}}\put(-40,10){\circle*{4}}\put(-40,20){\circle*{4}}
\put(-40,30){\circle*{4}}\put(-40,40){\circle*{4}}\put(-40,50){\circle*{1}}
\put(-50,0){\circle*{1}}\put(-50,10){\circle*{1}}\put(-50,20){\circle*{1}}
\put(-50,30){\circle*{1}}\put(-50,40){\circle*{1}}\put(-50,50){\circle*{1}}
\put(0,-10){\circle*{4}}\put(10,-10){\circle*{4}}\put(20,-10){\circle*{4}}
\put(30,-10){\circle*{4}}\put(40,-10){\circle*{4}}\put(50,-10){\circle*{1}}
\put(0,-20){\circle*{4}}\put(10,-20){\circle*{4}}\put(20,-20){\circle*{4}}
\put(30,-20){\circle*{4}}\put(40,-20){\circle*{4}}\put(50,-20){\circle*{1}}
\put(0,-30){\circle*{4}}\put(10,-30){\circle*{4}}\put(20,-30){\circle*{4}}
\put(30,-30){\circle*{4}}\put(40,-30){\circle*{4}}\put(50,-30){\circle*{1}}
\put(0,-40){\circle*{4}}\put(10,-40){\circle*{4}}\put(20,-40){\circle*{4}}
\put(30,-40){\circle*{4}}\put(40,-40){\circle*{4}}\put(50,-40){\circle*{1}}
\put(0,-50){\circle*{1}}\put(10,-50){\circle*{1}}\put(20,-50){\circle*{1}}
\put(30,-50){\circle*{1}}\put(40,-50){\circle*{1}}\put(50,-50){\circle*{1}}
\end{picture}\caption{$\mathcal{S}(H^i_{R_+}(R))$ for $i = 2,3,4,5$ respectively}\end{figure}
\end{center}

In view of Theorem \ref{lc.3}, the supporting set
$\mathcal{S}(H^5_{R_+}(R))$ seems unremarkable. The local cohomology
module $H^4_{R_+}(R)$ is finitely graded. Although neither
$H^3_{R_+}(R)$ nor $H^2_{R_+}(R)$ is finitely graded, both have sets
of supporting degrees that are quite restricted.
\end{ex}

We now return to the general situation described in Notation
\ref{tm.1}. In the case where $r=1$, one way of recording that a
local cohomology module $H^i_{\fb}(M)$ is finitely graded is to
state that there exist $s, t \in \Z$ with $s < t$ such that
$$
\mathcal{S}(H^i_{\fb}(M)) = \left\{n \in \Z: H^i_{\fb}(M)_n \neq 0
\right\} \subseteq \left\{n \in \Z : s \leq n < t\right\}.
$$
One might expect the natural generalization to our multi-graded
situation to involve conditions such as
$$
\mathcal{S}(H^i_{\fb}(M)) = \left\{\mathbf{n} \in \Z^r:
H^i_{\fb}(M)_{\mathbf{n}} \neq 0 \right\} \subseteq
\left\{\mathbf{n}=(n_1, \ldots, n_r) \in \Z^r : s_i \leq n_i < t_i
\mbox{~for all~} i = 1, \ldots,r\right\},
$$
where $\mathbf{s} = (s_1, \ldots,s_r)$, $\mathbf{t} = (t_1,
\ldots,t_r) \in \Z^r$ satisfy $\mathbf{s} \leq \mathbf{t}$. However,
in the light of evidence like that provided by Example \ref{mpbex}
above, and other examples, we introduce the following.

\begin{ntn}\label{tm.5A} Let $\mathbf{s} = (s_1, \ldots,s_r)$, $\mathbf{t} = (t_1,
\ldots,t_r) \in \Z^r$ with $\mathbf{s} \leq \mathbf{t}$. We set
$$
\X(\mathbf{s},\mathbf{t}) := \left\{\mathbf{n}=(n_1, \ldots, n_r)
\in \Z^r : \mbox{~there exists~} i \in \{1, \ldots, r\} \mbox{~such
that~} s_i \leq n_i < t_i \right\}.
$$
\end{ntn}

\begin{ex}\label{tm.5Aex} Figure 2 below illustrates, in the case
where $r = 2$, the set $\X((-2,1),(0,2))$.
\begin{center}
\begin{figure}[h]\begin{picture}(100,100)(-50,-50) \put(-50,0){\vector(1,0){100}}
\put(0,-50){\vector(0,1){100}}
\put(-20,0){\circle*{4}}\put(-20,10){\circle*{4}}\put(-20,20){\circle*{4}}
\put(-20,30){\circle*{4}}\put(-20,40){\circle*{4}}\put(-20,50){\circle*{1}}
\put(-20,-10){\circle*{4}}\put(-20,-20){\circle*{4}}
\put(-20,-30){\circle*{4}}\put(-20,-40){\circle*{4}}\put(-20,-50){\circle*{1}}
\put(-10,0){\circle*{4}}\put(-10,10){\circle*{4}}\put(-10,20){\circle*{4}}
\put(-10,30){\circle*{4}}\put(-10,40){\circle*{4}}\put(-10,50){\circle*{1}}
\put(-10,-10){\circle*{4}}\put(-10,-20){\circle*{4}}
\put(-10,-30){\circle*{4}}\put(-10,-40){\circle*{4}}\put(-10,-50){\circle*{1}}
\put(0,10){\circle*{4}}\put(10,10){\circle*{4}}\put(20,10){\circle*{4}}
\put(30,10){\circle*{4}}\put(40,10){\circle*{4}}\put(50,10){\circle*{1}}
\put(-10,10){\circle*{4}}\put(-20,10){\circle*{4}}
\put(-30,10){\circle*{4}}\put(-40,10){\circle*{4}}\put(-50,10){\circle*{1}}
\end{picture}
\caption{The set $\X((-2,1),(0,2))$ in $\Z^2$}
\end{figure}
\end{center}
\end{ex}

\begin{rmk}\label{tm.5BCDE} Let $\mathbf{s},
\mathbf{s}', \mathbf{s}'', \mathbf{t}, \mathbf{t}', \mathbf{t}''\in
\Z^r$ with $\mathbf{s} \leq \mathbf{t}$, $\mathbf{s}' \leq
\mathbf{t}'$ and $\mathbf{s}'' \leq \mathbf{t}''$. Let $\mathbf{m}
\in \nn^r \setminus\{\mathbf{0}\}$.

\begin{enumerate}
\item Clearly $(\mathbf{s} + \nn^r) \setminus (\mathbf{t} +
\nn^r)\subseteq \X(\mathbf{s},\mathbf{t})$.
\item  Suppose
that $\mathcal{P}(\mathbf{t} - \mathbf{s}) \subseteq
\mathcal{P}(\mathbf{m})$. Let $\mathbf{w} \in \Z^r$ be such that
$\mathcal{P}(\mathbf{w}) \subseteq \{1, \ldots,r\} \setminus
\mathcal{P}(\mathbf{m})$. Then
$$
\X(\mathbf{s} + \mathbf{w},\mathbf{t} + \mathbf{w}) =
\X(\mathbf{s},\mathbf{t}) = \left\{\mathbf{n} \in\Z^r : \mbox{~there
exists~}i \in \mathcal{P}(\mathbf{m}) \mbox{~such that~} s_i \leq
n_i <t_i\right\}.
$$
\item Clearly
$ \X(\mathbf{s}',\mathbf{t}') \cup \X(\mathbf{s}'',\mathbf{t}'')
\subseteq
\X(\min\{\mathbf{s}',\mathbf{s}''\},\max\{\mathbf{t}',\mathbf{t}''\}).
$
\item Assume that $\mathcal{P}(\mathbf{t}' - \mathbf{s}') \subseteq
\mathcal{P}(\mathbf{m})$ and $\mathcal{P}(\mathbf{t}'' -
\mathbf{s}'') \subseteq \mathcal{P}(\mathbf{m})$. For each $i \in
\{1, \ldots, r\}$, set
$$
\widetilde{s}_i := \min\{s_i',s_i''\}\quad \mbox{and} \quad
\widetilde{t}_i := \begin{cases} \max\{t_i',t_i''\} & \text{if } i
\in \mathcal{P}(\mathbf{m}),\\\widetilde{s}_i & \text{if } i \in
\{1, \ldots,r\} \setminus \mathcal{P}(\mathbf{m}). \end{cases}
$$
Set $\widetilde{\mathbf{s}} := (\widetilde{s}_1,
\ldots,\widetilde{s}_r)$ and $\widetilde{\mathbf{t}} :=
(\widetilde{t}_1, \ldots,\widetilde{t}_r)$. Then
$$
\widetilde{\mathbf{s}} \leq \widetilde{\mathbf{t}}, \quad
\mathcal{P}(\widetilde{\mathbf{t}} - \widetilde{\mathbf{s}})
\subseteq \mathcal{P}(\mathbf{m}) \quad \mbox{and}\quad
\X(\mathbf{s}',\mathbf{t}') \cup \X(\mathbf{s}'',\mathbf{t}'')
\subseteq \X(\widetilde{\mathbf{s}} ,\widetilde{\mathbf{t}}).
$$
\end{enumerate}
\end{rmk}

The next lemma provides a small hint about the importance of the
sets $\X(\mathbf{s},\mathbf{t})$ of Notation \ref{tm.5A} for our
work.

\begin{lem}\label{tm.4} Let $\mathbf{m} \in \nn^r \setminus
\{\mathbf{0}\}$. Assume that $M$ is finitely generated and that
$R_{\mathbf{m}} \subseteq \sqrt{(0:_RM)}$. Then there exist
$\mathbf{s}, \mathbf{t} \in \Z^r$ such that $\mathbf{s}\leq
\mathbf{t}$, $\mathcal{P}(\mathbf{t}-\mathbf{s}) \subseteq
\mathcal{P}(\mathbf{m})$ and $\mathcal{S}(M) \subseteq (\mathbf{s} +
\nn^r) \setminus (\mathbf{t} + \nn^r)$, so that $\mathcal{S}(M)
\subseteq \X(\mathbf{s},\mathbf{t})$ in view of Remark\/ {\rm
\ref{tm.5BCDE}(i)}.
\end{lem}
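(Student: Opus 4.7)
Plan.  The idea is to take $\mathbf{s}$ as the coordinatewise minimum of the degrees of a finite homogeneous generating set of $M$ (which automatically gives the lower inclusion), and to obtain $\mathbf{t}$ by pushing $\mathbf{s}$ up only in the coordinates $i \in \mathcal{P}(\mathbf{m})$, by an amount dictated by a uniform exponent forcing $R_{\mathbf{m}}$ into $\ann_R M$.

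First, fix $\Z^r$-homogeneous generators $m_1, \ldots, m_k$ of $M$ with $\deg(m_j) = \mathbf{n}_j$, and set $s_i := \min_{1 \leq j \leq k}(\mathbf{n}_j)_i$ for $i = 1, \ldots, r$.  Since $R$ is positively graded, every $\mathbf{n} \in \mathcal{S}(M)$ forces some generator $m_j$ to contribute, so $\mathbf{n} \geq \mathbf{n}_j \geq \mathbf{s}$; this yields $\mathcal{S}(M) \subseteq \mathbf{s} + \nn^r$ with no further work.  Next, because $R$ is Noetherian, the $\Z^r$-graded ideal $R_{\mathbf{m}}R$ is finitely generated and is contained in $\sqrt{\ann_R M}$ by hypothesis, so $(R_{\mathbf{m}}R)^K \subseteq \ann_R M$ for some $K \in \N$; in particular $R_{\mathbf{m}}^K M = 0$.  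Iterating Lemma \ref{mu.2} in the standard case gives $R_{K\mathbf{m}} = R_{\mathbf{m}} R_{(K-1)\mathbf{m}} = \cdots = R_{\mathbf{m}}^K$, so $R_{K\mathbf{m}} \cdot M = 0$.

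Now define
$$
t_i := \begin{cases} s_i & \text{if } i \notin \mathcal{P}(\mathbf{m}),\\ \max_{1 \leq j \leq k}(\mathbf{n}_j)_i + K m_i & \text{if } i \in \mathcal{P}(\mathbf{m}). \end{cases}
$$
Both shape conditions $\mathbf{s} \leq \mathbf{t}$ and $\mathcal{P}(\mathbf{t} - \mathbf{s}) \subseteq \mathcal{P}(\mathbf{m})$ are immediate from the formula.  To establish $\mathcal{S}(M) \cap (\mathbf{t} + \nn^r) = \emptyset$, fix $\mathbf{n} \geq \mathbf{t}$ and an arbitrary $x \in M_{\mathbf{n}}$; write $x = \sum_{j : \mathbf{n} \geq \mathbf{n}_j} a_j m_j$ with $a_j \in R_{\mathbf{n} - \mathbf{n}_j}$.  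I claim $\mathbf{n} - \mathbf{n}_j \geq K\mathbf{m}$ for each such $j$:  for $i \in \mathcal{P}(\mathbf{m})$ this holds because $n_i \geq t_i \geq (\mathbf{n}_j)_i + K m_i$, while for $i \notin \mathcal{P}(\mathbf{m})$ it holds because $K m_i = 0$ and $n_i \geq (\mathbf{n}_j)_i$.  Standardness (Lemma \ref{mu.2}) then lets us factor $R_{\mathbf{n}-\mathbf{n}_j} = R_{\mathbf{n}-\mathbf{n}_j - K\mathbf{m}} \cdot R_{K\mathbf{m}}$, and $R_{K\mathbf{m}} M = 0$ forces $a_j m_j = 0$; hence $x = 0$.

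The only delicate point — and where the hypothesis $R$ standard is genuinely used — is reconciling the algebraic annihilation condition with the combinatorial constraint $\mathcal{P}(\mathbf{t}-\mathbf{s}) \subseteq \mathcal{P}(\mathbf{m})$: one is forced to take $t_i = s_i$ in every non-$\mathbf{m}$-direction even though the generator degrees $(\mathbf{n}_j)_i$ may be spread out there.  This is harmless precisely because $K m_i = 0$ in those directions, so the annihilation inequality $n_i - (\mathbf{n}_j)_i \geq K m_i$ holds automatically once $\mathbf{n} \geq \mathbf{n}_j$, which is the only case that contributes to $M_{\mathbf{n}}$.  All remaining ingredients — finite generation of $M$, the finite-generation-plus-radical route to a uniform power, and standardness collapsing $R_{\mathbf{m}}^K$ to $R_{K\mathbf{m}}$ — are routine.
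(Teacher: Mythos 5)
Your proof is correct and takes essentially the same approach as the paper's: choose a finite homogeneous generating set, set $\mathbf{s}$ to the coordinatewise minimum of the generator degrees (giving the lower inclusion), use standardness to convert $(R_{\mathbf{m}})^K \subseteq \ann_R M$ into $R_{K\mathbf{m}}M = 0$, and push $\mathbf{t}$ past the generator degrees by $K\mathbf{m}$ only in the $\mathcal{P}(\mathbf{m})$-coordinates, so that every coefficient landing in $M_{\mathbf{n}}$ for $\mathbf{n}\geq\mathbf{t}$ factors through $R_{K\mathbf{m}}$. The paper phrases it with a degree box $[\mathbf{s},\mathbf{w}]$ rather than an explicit generator list, but the computation is identical.
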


\begin{proof} As $M$ is finitely generated, there exist $\mathbf{s}, \mathbf{w} \in \Z^r$
such that $\mathbf{s}\leq \mathbf{w}$ and $M = \sum_{\mathbf{s} \leq
\mathbf{n} \leq \mathbf{w}} RM_{\mathbf{n}}$. In particular,
$\mathcal {S}(M) \subseteq \mathbf{s} + \nn^r$.

Moreover, there exists $u \in \N$ such that $(R_{\mathbf{m}})^u
\subseteq (0:_RM)$; since $R$ is standard, $(R_{\mathbf{m}})^u =
R_{u\mathbf{m}}$; hence $R_{u\mathbf{m}}M_{\mathbf{n}} = 0$ for all
$\mathbf{n} \in \Z^r$.

Let $\mathbf{t} = \mathbf{s} + \sum_{i \in \mathcal{P}(\mathbf{m})}
(w_i - s_i + um_i)\mathbf{e}_i$. Now, let $\mathbf{h} = (h_1,
\ldots, h_r) \in \mathbf{t} + \nn^r$. Our proof will be complete
once we have shown that $M_{\mathbf{h}} = 0$. For each $i \in
\mathcal{P}(\mathbf{m})$, we have $h_i \geq t_i = w_i + um_i$.
Moreover,
$$
M_{\mathbf{h}} = \sum_{\mathbf{n} \in
\mathcal{T}}R_{\mathbf{h}-\mathbf{n}}M_{\mathbf{n}},\quad
\mbox{where~} \mathcal{T} = \left\{ \mathbf{n} \in\Z^r:\mathbf{s}
\leq \mathbf{n} \leq \mathbf{w},~ \mathbf{n} \leq
\mathbf{h}\right\}.
$$
Let $\mathbf{n} = (n_1, \ldots, n_r) \in \mathcal{T}$. If $i \in
\mathcal{P}(\mathbf{m})$, then $n_i + um_i \leq w_i + um_i \leq
h_i$; if $i \in \{1, \ldots, r\} \setminus \mathcal{P}(\mathbf{m})$,
then $n_i + um_i = n_i \leq h_i$. Consequently $\mathbf{n} +
u\mathbf{m} \leq \mathbf{h}$ . Therefore $u\mathbf{m} \leq
\mathbf{h} - \mathbf{n}$ for all $\mathbf{n} \in \mathcal{T}$, and
hence
$$
M_{\mathbf{h}} = \sum_{\mathbf{n} \in
\mathcal{T}}R_{\mathbf{h}-\mathbf{n}}M_{\mathbf{n}} =
\sum_{\mathbf{n} \in
\mathcal{T}}R_{\mathbf{h}-\mathbf{n}-u\mathbf{m}}R_{u\mathbf{m}}M_{\mathbf{n}}
= 0.
$$
\end{proof}

\begin{defi}\label{tm.6def} Let $\mathcal{Q} \subseteq \{1, \ldots,
r\}$. By a {\em $\mathcal{Q}$-domain in $\Z^r$\/} we mean a set of
the form
$$
\X(\mathbf{s},\mathbf{t}) \quad \mbox{with ~} \mathbf{s},\mathbf{t}
\in \Z^r,~\mathbf{s} \leq \mathbf{t} \mbox{~and~}
\mathcal{P}(\mathbf{t} - \mathbf{s}) \subseteq \mathcal{Q}.
$$
\end{defi}

\begin{rmks}\label{tm.6}
The following statements are immediate from the definition.
\begin{enumerate}
\item A $\emptyset$-domain in $\Z^r$ is empty.
\item If $\mathcal{Q} \subseteq \mathcal{Q}' \subseteq \{1, \ldots,
r\}$ and if $\X$ is a $\mathcal{Q}$-domain in $\Z^r$, then $\X$ is a
$\mathcal{Q}'$-domain in $\Z^r$.
\item If $\X$ is a $\mathcal{Q}$-domain in $\Z^r$ and $\mathbf{w} \in \Z^r$,
then $\mathbf{w} + \X := \left\{ \mathbf{w} + \mathbf{n} :
\mathbf{n} \in \X \right\}$ is a $\mathcal{Q}$-domain in $\Z^r$.
\item If $\mathbf{s},\mathbf{t}
\in \Z^r$ with $\mathbf{s} \leq \mathbf{t}$ and
$\mathcal{P}(\mathbf{t} - \mathbf{s}) \subseteq \mathcal{Q}$, then
$(\mathbf{s} + \nn^r) \setminus (\mathbf{t} + \nn^r)$ is contained
in a $\mathcal{Q}$-domain in $\Z^r$, by Remark \ref{tm.5BCDE}(i).
\item If $\X$ is a $\mathcal{Q}$-domain in $\Z^r$ and $\mathbf{w} \in \Z^r$ is
such that $\mathcal{P}(\mathbf{w}) \cap \mathcal{Q} = \emptyset$,
then $\X = \mathbf{w} + \X$, by Remark \ref{tm.5BCDE}(ii).
\item By Remark \ref{tm.5BCDE}(iv), the union of finitely many
$\mathcal{Q}$-domains in $\Z^r$ is contained in a
$\mathcal{Q}$-domain in $\Z^r$.
\end{enumerate}
\end{rmks}

\begin{lem}\label{tm.7} Let $\mathbf{m}, \mathbf{k} \in \nn^r
\setminus \{\mathbf{0}\}$, and let $T$ be a $\Z^r$-graded $R$-module
such that $R_{\mathbf{m}}T = 0$. Let $y \in R_{\mathbf{k}}$, and let
$K$ denote the kernel of the homogeneous $R$-homomorphism $T \lra
T(\mathbf{k})$ given by multiplication by $y$.

\begin{enumerate}
\item If $\mathcal{P}(\mathbf{m}) \subseteq
\mathcal{P}(\mathbf{k})$, then there exists $v \in \nn$ such that
$\mathcal{S}(T) \subseteq \bigcup_{j=0}^v\left(\mathcal{S}(K) -
j\mathbf{k}\right)$.
\item If $\mathcal{P}(\mathbf{m}) \not\subseteq
\mathcal{P}(\mathbf{k})$, if multiplication by $y$ provides an
isomorphism $T \stackrel{\cong}{\lra} T(\mathbf{k})$, and if $T$
considered as an $R_y$-module is finitely generated, then
$\mathcal{S}(T)$ is contained in a $(\mathcal{P}(\mathbf{m})
\setminus \mathcal{P}(\mathbf{k}))$-domain in $\Z^r$.
\end{enumerate}
\end{lem}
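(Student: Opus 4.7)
For part~(i), the plan is to exploit the containment $\mathcal{P}(\mathbf{m}) \subseteq \mathcal{P}(\mathbf{k})$ to choose $v \in \N$ with $v\mathbf{k} \geq \mathbf{m}$; standardness of $R$ and Lemma~\ref{mu.2} then give $R_{v\mathbf{k}} = R_{\mathbf{m}} R_{v\mathbf{k}-\mathbf{m}}$, so $y^v T = 0$. Setting $K_j := \ker(\mu_{y^j} : T \to T(j\mathbf{k}))$ produces a filtration $0 = K_0 \subseteq K_1 = K \subseteq \cdots \subseteq K_v = T$. Multiplication by $y$ induces a $\Z^r$-homogeneous map $K_j \to K_{j-1}(\mathbf{k})$ with kernel exactly $K$, so $\mathcal{S}(K_j) \subseteq \mathcal{S}(K) \cup (\mathcal{S}(K_{j-1}) - \mathbf{k})$; a routine induction on $j$ then yields $\mathcal{S}(K_j) \subseteq \bigcup_{l=0}^{j-1}(\mathcal{S}(K) - l\mathbf{k})$, and taking $j = v$ completes part~(i).

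For part~(ii), my strategy is to regard $T$ as a finitely generated $\Z^r$-graded module over the quotient ring $A := R_y/R_y R_{\mathbf{m}}$ (which is legitimate because $R_{\mathbf{m}} T = 0$) and then bound $\mathcal{S}(A)$ directly. By standardness and Lemma~\ref{mu.2}, $R_{\mathbf{m}} R = \bigoplus_{\mathbf{n} \geq \mathbf{m}} R_{\mathbf{n}}$ with $R_{\mathbf{n}} = R_{\mathbf{m}} R_{\mathbf{n}-\mathbf{m}}$ for $\mathbf{n} \geq \mathbf{m}$. Writing the homogeneous components of $R_y$ and of $R_y R_{\mathbf{m}}$ as colimits of the groups $R_{\mathbf{d}+a\mathbf{k}}$ under multiplication by $y$ (restricted, in the second case, to those $a \in \nn$ with $\mathbf{d}+a\mathbf{k} \geq \mathbf{m}$), one reads off that $(R_y R_{\mathbf{m}})_{\mathbf{d}} = (R_y)_{\mathbf{d}}$ whenever $d_i \geq m_i$ for every $i \notin \mathcal{P}(\mathbf{k})$, and $(R_y R_{\mathbf{m}})_{\mathbf{d}} = 0$ otherwise. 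Combining this with the analogous description of $\mathcal{S}(R_y)$ gives
$$\mathcal{S}(A) \subseteq \{\mathbf{d} \in \Z^r : \exists\, i \in \mathcal{Q},\ 0 \leq d_i < m_i\} = \X(\mathbf{0},\mathbf{m}_{\mathcal{Q}}),$$
where $\mathcal{Q} := \mathcal{P}(\mathbf{m}) \setminus \mathcal{P}(\mathbf{k})$ and $\mathbf{m}_{\mathcal{Q}} \in \nn^r$ agrees with $\mathbf{m}$ on $\mathcal{Q}$ and is $0$ elsewhere; since $\mathcal{P}(\mathbf{m}_{\mathcal{Q}}) = \mathcal{Q}$, this is a $\mathcal{Q}$-domain.

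To finish, I will pick homogeneous $R_y$-generators $t_1,\ldots,t_s$ of $T$ with degrees $\mathbf{n}_1,\ldots,\mathbf{n}_s$: each cyclic submodule $A t_j$ has supporting degrees in $\mathbf{n}_j + \mathcal{S}(A) \subseteq \X(\mathbf{n}_j,\mathbf{n}_j + \mathbf{m}_{\mathcal{Q}})$, again a $\mathcal{Q}$-domain by Remark~\ref{tm.6}(iii). Thus $\mathcal{S}(T)$ is the union of finitely many $\mathcal{Q}$-domains, and Remark~\ref{tm.6}(vi) places it inside a single $\mathcal{Q}$-domain, giving~(ii). The main obstacle I anticipate is the support computation for $A$: one has to track carefully which coordinates get ``smeared out'' by the shifts $\pm a\mathbf{k}$ (the indices in $\mathcal{P}(\mathbf{k})$) versus those that remain rigid (the indices in $\mathcal{Q}$), and the essential input that makes $\mathcal{Q}$ nonempty is precisely the hypothesis $\mathcal{P}(\mathbf{m}) \not\subseteq \mathcal{P}(\mathbf{k})$.
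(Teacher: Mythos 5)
Your part~(i) is in substance the paper's argument: both first observe that a suitable power of $y$ kills $T$ (your $v$ is chosen with $v\mathbf{k} \geq \mathbf{m}$; the paper chooses $u$ with $m_i \leq uk_i$ for $i \in \mathcal{P}(\mathbf{k})$ and uses $\mathbf{h}=\mathbf{0}$ in this case), and then both deduce the support inclusion from injectivity of $y$ on components not hit by $\mathcal{S}(K)$. The paper phrases the second step by noting directly that $y^u : T_{\mathbf{n}} \to T_{\mathbf{n}+u\mathbf{k}}$ is a composite of injections for $\mathbf{n}$ outside the union, while you package the same observation as a filtration $K_0 \subseteq K_1 \subseteq \cdots \subseteq K_v = T$ and a short induction; the two are equivalent.

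Your part~(ii) is correct but takes a genuinely different route. The paper stays entirely inside $T$: setting $\mathbf{h} := \sum_{i \notin \mathcal{P}(\mathbf{k})} m_i\mathbf{e}_i$, it first deduces $R_{\mathbf{h}}T=0$ from $y^u R_{\mathbf{h}}T = 0$ and invertibility of $y$, then picks generating degrees $\mathbf{g}^{(1)},\ldots,\mathbf{g}^{(q)}$ for $T$ over $R_y$, defines $\mathbf{s},\mathbf{t}$ from the extremes of those degrees (with support $\mathcal{P}(\mathbf{h})$), and shows $T_{\mathbf{n}}=0$ for $\mathbf{n}\notin\X(\mathbf{s},\mathbf{t})$ by a two-case degree count that forces every summand $y^{v_j}R_{\mathbf{n}-v_j\mathbf{k}-\mathbf{g}^{(j)}}T_{\mathbf{g}^{(j)}}$ to vanish, either because $R_{\mathbf{n}-v_j\mathbf{k}-\mathbf{g}^{(j)}}=0$ or because it factors through $R_{\mathbf{h}}T = 0$. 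You instead push the annihilation into the ring itself, work over $A = R_y/R_yR_{\mathbf{m}}$, and compute $\mathcal{S}(A)$ directly; the bound on $\mathcal{S}(T)$ then drops out from the shifts by the finitely many generating degrees and Remarks~\ref{tm.6}(iii),(vi). Your approach is more structural and isolates the reusable fact $\mathcal{S}(A)\subseteq\X(\mathbf{0},\mathbf{m}_{\mathcal{Q}})$, at the modest cost of having to verify the component-wise description of the ideal $R_yR_{\mathbf{m}}$ inside $R_y$; the paper's is more elementary and self-contained but the choice of $\mathbf{h}$ and the resulting degree bookkeeping are harder to motivate. Both are sound.
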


\begin{proof} Write $\mathbf{m} = (m_1, \ldots, m_r)$ and $\mathbf{k} = (k_1, \ldots,
k_r)$. Let $u \in \N$ be such that $m_i \leq uk_i$ for all $i \in
\mathcal{P}(\mathbf{k})$. Set $\mathbf{h} := \sum_{i \in \{1,
\ldots,r\}\setminus \mathcal{P}(\mathbf{k})}m_i\mathbf{e}_i$. Then,
if $i \in \mathcal{P}(\mathbf{k})$, we have $(u\mathbf{k} +
\mathbf{h})_i = uk_i \geq m_i$, whereas, if $i \in \{1,\ldots,r\}
\setminus  \mathcal{P}(\mathbf{k})$, we have $(u\mathbf{k} +
\mathbf{h})_i = uk_i +m_i\geq m_i$. Therefore $\mathbf{m} \leq
u\mathbf{k} + \mathbf{h}$.

Now, let $z \in R_{\mathbf{h}}$. Then, because $R$ is standard,
$y^uz \in R_{u\mathbf{k} + \mathbf{h}} = R_{u\mathbf{k} +
\mathbf{h}- \mathbf{m}}R_{\mathbf{m}}$. As $R_{\mathbf{m}}T = 0$, it
follows that $y^uzT = 0$. Therefore $y^uR_{\mathbf{h}}T = 0$.

(i) Assume that $\mathcal{P}(\mathbf{m}) \subseteq
\mathcal{P}(\mathbf{k})$. Then $\mathcal{P}(\mathbf{h}) =
\mathcal{P}(\mathbf{m}) \setminus \mathcal{P}(\mathbf{k}) =
\emptyset$, so that $\mathbf{h} = \mathbf{0}$. Hence $y^uT =
y^uR_{\mathbf{0}}T = 0$.

Now let $\K := \bigcup_{j=0}^{u-1}(\mathcal{S}(K) - j\mathbf{k})$,
and let $\mathbf{n} \in \Z^r \setminus \K$. If we show that
$T_{\mathbf{n}} = 0$, then we shall have proved part (i). Now
$\mathbf{n} + j\mathbf{k} \not\in \mathcal{S}(K)$ for all $j \in
\{0,\ldots, u-1\}$, and so the $R_{\mathbf{0}}$-homomorphism $y^u :
T_{\mathbf{n}} \lra T_{\mathbf{n}+u\mathbf{k}}$, which is the
composition of the $R_{\mathbf{0}}$-homomorphisms $y :
T_{\mathbf{n}+j\mathbf{k}} \lra T_{\mathbf{n}+(j+1)\mathbf{k}}$ for
$j = 0, \ldots, u-1$, is injective. But $y^uT_{\mathbf{n}} = 0$, and
so $T_{\mathbf{n}} = 0$.

(ii) Now assume that $\mathcal{P}(\mathbf{m}) \not\subseteq
\mathcal{P}(\mathbf{k})$, that multiplication by $y$ provides an
isomorphism $T \stackrel{\cong}{\lra} T(\mathbf{k})$, and that $T$
considered as an $R_y$-module is finitely generated. As
$y^uR_{\mathbf{h}}T = 0$, it follows that $R_{\mathbf{h}}T = 0$.

As $T$ is finitely generated over $R_y$, there are finitely many
$r$-tuples $\mathbf{g}^{(1)}, \ldots, \mathbf{g}^{(q)} \in \Z^r$
such that $T = \sum_{j=1}^q R_yT_{\mathbf{g}^{(j)}}$. Now, for $i
\in \{1, \ldots, r\}$, set
$$
s_i := \begin{cases} 0 & \text{if } i
\not\in\mathcal{P}(\mathbf{h}),\\ \min\{g^{(j)}_i : j = 1, \ldots,
q\}& \text{if } i \in\mathcal{P}(\mathbf{h}), \end{cases} \quad
\quad t_i := \begin{cases} 0 & \text{if } i
\not\in\mathcal{P}(\mathbf{h}),\\ \max\{g^{(j)}_i : j = 1, \ldots,
q\} + h_i& \text{if } i \in\mathcal{P}(\mathbf{h}),
\end{cases}
$$
and put $\mathbf{s} = (s_1, \ldots, s_r)$, $\mathbf{t} = (t_1,
\ldots, t_r)$. Then $\mathbf{s} \leq \mathbf{t}$ and
$\mathcal{P}(\mathbf{t}-\mathbf{s}) = \mathcal{P}(\mathbf{h}) =
\mathcal{P}(\mathbf{m}) \setminus \mathcal{P}(\mathbf{k})$.  Let
$\mathbf{n} \in \Z^r \setminus \X(\mathbf{s},\mathbf{t})$. If we
show that $T_{\mathbf{n}} = 0$, then we shall have proved part (ii).
Let $\alpha \in T_{\mathbf{n}}$. There exist integers $v_1, \ldots,
v_q$ such that $\alpha \in \sum_{j=1}^qy^{v_j}R_{\mathbf{n} -
v_j\mathbf{k} - \mathbf{g}^{(j)}}T_{\mathbf{g}^{(j)}}$.

Note that, for each $i \in \mathcal{P}(\mathbf{h}) =
\mathcal{P}(\mathbf{m}) \setminus \mathcal{P}(\mathbf{k})$, we have
either $n_i < s_i$ or $t_i \leq n_i$ (because $\mathbf{n} \not\in
\X(\mathbf{s},\mathbf{t})$).

Assume first that there is some $i \in \mathcal{P}(\mathbf{h})$ with
$n_i < s_i$. As $i \not\in \mathcal{P}(\mathbf{k})$, it follows that
$$
(\mathbf{n} - v_j\mathbf{k} - \mathbf{g}^{(j)})_i = n_i -v_jk_i -
g^{(j)}_i = n_i - g^{(j)}_i < s_i - g^{(j)}_i \leq 0,
$$
for all $j \in \{1, \ldots, q\}$, so that $R_{\mathbf{n} -
v_j\mathbf{k} - \mathbf{g}^{(j)}} = 0$ and $\alpha = 0$.

Therefore, we can, and do, assume that $t_i \leq n_i$ for all $i \in
\mathcal{P}(\mathbf{h})$. In this case, for each $i \in
\mathcal{P}(\mathbf{h})$ and each $j \in \{1, \ldots, q\}$, we have
$$
(\mathbf{n} - v_j\mathbf{k} - \mathbf{g}^{(j)})_i = n_i -v_jk_i -
g^{(j)}_i = n_i - g^{(j)}_i \geq t_i - g^{(j)}_i \geq h_i.
$$
Therefore, for each $j \in \{1, \ldots, q\}$, either $\mathbf{n} -
v_j\mathbf{k} - \mathbf{g}^{(j)} \geq \mathbf{h}$, or $\mathbf{n} -
v_j\mathbf{k} - \mathbf{g}^{(j)}$ has a negative component and
$R_{\mathbf{n} - v_j\mathbf{k} - \mathbf{g}^{(j)}} = 0$. This means
that
$$
\alpha \in \sum_{j=1}^qy^{v_j}R_{\mathbf{n} - v_j\mathbf{k} -
\mathbf{g}^{(j)}}T_{\mathbf{g}^{(j)}} = \sum_{\stackrel{\scriptstyle
j=1}{\mathbf{n} - v_j\mathbf{k} - \mathbf{g}^{(j)}\geq
\mathbf{0}}}^qy^{v_j}R_{\mathbf{n} - v_j\mathbf{k} -
\mathbf{g}^{(j)}-\mathbf{h}}R_{\mathbf{h}}T_{\mathbf{g}^{(j)}} = 0.
$$
It follows that $T_{\mathbf{n}} = 0$, as required.
\end{proof}

\begin{lem}\label{tm.8} Let $\mathbf{m} \in \nn^r
\setminus \{\mathbf{0}\}$ and $\mathbf{k} \in \nn^r$. Assume that
$M$ is finitely generated and that $R_{\mathbf{m}} \subseteq
\sqrt{(0:_RM)}$. Let $y \in R_{\mathbf{k}}$. Then there exists a
$(\mathcal{P}(\mathbf{m}) \setminus \mathcal{P}(\mathbf{k}))$-domain
$\X$ in $\Z^r$ such that $\mathcal{S}(H^1_{yR}(M)) \subseteq \X$.
\end{lem}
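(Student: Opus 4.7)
My plan is to use the standard \v Cech-like exact sequence
$$
0 \lra \Gamma_{yR}(M) \lra M \lra M_y \lra H^1_{yR}(M) \lra 0
$$
of $\Z^r$-graded $R$-modules and $\Z^r$-homogeneous maps, which exhibits $H^1_{yR}(M)$ as a $\Z^r$-graded epimorphic image of $M_y$. Consequently $\mathcal{S}(H^1_{yR}(M)) \subseteq \mathcal{S}(M_y)$, and it suffices to produce a $(\mathcal{P}(\mathbf{m}) \setminus \mathcal{P}(\mathbf{k}))$-domain in $\Z^r$ that contains $\mathcal{S}(M_y)$. Because $M$ is finitely generated and $R_{\mathbf{m}} \subseteq \sqrt{(0:_R M)}$, and because the standardness of $R$ gives $(R_{\mathbf{m}})^u = R_{u\mathbf{m}}$, I fix $u \in \N$ with $R_{u\mathbf{m}} M = 0$; then automatically $R_{u\mathbf{m}} M_y = 0$.

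I then split into two cases. If $\mathcal{P}(\mathbf{m}) \subseteq \mathcal{P}(\mathbf{k})$, choose $v \in \N$ large enough that $v k_i \geq u m_i$ for every $i \in \mathcal{P}(\mathbf{m})$; this is possible because $k_i > 0$ for every such $i$. Then $v\mathbf{k} \geq u\mathbf{m}$ in $\nn^r$, and standardness yields $y^v \in R_{v\mathbf{k}} = R_{v\mathbf{k}-u\mathbf{m}}\,R_{u\mathbf{m}}$, so $y^v M = 0$ and $M_y = 0$. In this case $\mathcal{P}(\mathbf{m}) \setminus \mathcal{P}(\mathbf{k}) = \emptyset$, and by Remark \ref{tm.6}(i) the empty set $\mathcal{S}(M_y) = \emptyset$ is itself the unique $\emptyset$-domain, so the conclusion holds trivially.

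In the remaining case, $\mathcal{P}(u\mathbf{m}) = \mathcal{P}(\mathbf{m}) \not\subseteq \mathcal{P}(\mathbf{k})$, and I apply Lemma \ref{tm.7}(ii) to the $\Z^r$-graded $R$-module $T := M_y$, with the role of $\mathbf{m}$ played by $u\mathbf{m}$. The hypotheses are met: $R_{u\mathbf{m}} M_y = 0$ by the preceding paragraph; multiplication by $y$ is a $\Z^r$-homogeneous isomorphism $M_y \stackrel{\cong}{\lra} M_y(\mathbf{k})$ because $y$ is a unit in $R_y$; and $M_y$ is finitely generated as an $R_y$-module since $M$ is finitely generated as an $R$-module. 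Lemma \ref{tm.7}(ii) then furnishes a $(\mathcal{P}(u\mathbf{m}) \setminus \mathcal{P}(\mathbf{k})) = (\mathcal{P}(\mathbf{m}) \setminus \mathcal{P}(\mathbf{k}))$-domain that contains $\mathcal{S}(M_y)$, which is the required set.

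The only mildly delicate point is to replace $\mathbf{m}$ by the multiple $u\mathbf{m}$ at the moment of invoking Lemma \ref{tm.7}(ii), in order to upgrade the radical-containment hypothesis $R_{\mathbf{m}} \subseteq \sqrt{(0:_R M)}$ to the literal annihilation hypothesis that Lemma \ref{tm.7} demands; since $\mathcal{P}(u\mathbf{m}) = \mathcal{P}(\mathbf{m})$, this substitution is cost-free. Beyond this bookkeeping step there is no serious obstacle.
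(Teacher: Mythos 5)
Your overall strategy matches the paper's: identify $H^1_{yR}(M)$ as a homogeneous quotient of $M_y$ (the paper reaches the same module as $D_{yR}(M)$), replace $\mathbf{m}$ by $u\mathbf{m}$ to upgrade the radical hypothesis to literal annihilation, and then appeal to Lemma \ref{tm.7}. Your treatment of the case $\mathcal{P}(\mathbf{m}) \subseteq \mathcal{P}(\mathbf{k})$ is a pleasant small simplification: instead of invoking Lemma \ref{tm.7}(i) (as the paper does, where the conclusion forces $\mathcal{S}(M_y)=\emptyset$ because the kernel of multiplication by $y$ is trivial), you argue directly that $y^v M = 0$ and hence $M_y = 0$, which is cleaner.

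There is, however, one genuine gap: the case $\mathbf{k} = \mathbf{0}$. The statement of Lemma \ref{tm.8} only requires $\mathbf{k} \in \nn^r$, so $\mathbf{k}=\mathbf{0}$ is allowed. In that situation $\mathcal{P}(\mathbf{k})=\emptyset$, and since $\mathbf{m}\neq\mathbf{0}$ you land in your second case, where you invoke Lemma \ref{tm.7}(ii). But Lemma \ref{tm.7} is stated under the hypothesis $\mathbf{k} \in \nn^r\setminus\{\mathbf{0}\}$, so you are not entitled to cite it here; strictly speaking, your argument breaks at that citation. (It is arguable that the proof of Lemma \ref{tm.7}(ii) goes through verbatim with $\mathbf{k}=\mathbf{0}$, but that would require re-proving the lemma, not merely citing it.) The paper sidesteps this by handling $\mathbf{k}=\mathbf{0}$ as a separate preliminary case via a different argument: when $y \in R_{\mathbf{0}}$, the multi-graded analogue of \cite[Lemma 13.1.10]{LC} gives degree-wise $R_{\mathbf{0}}$-isomorphisms $H^1_{yR}(M)_{\mathbf{n}} \cong H^1_{yR_{\mathbf{0}}}(M_{\mathbf{n}})$, so $\mathcal{S}(H^1_{yR}(M)) \subseteq \mathcal{S}(M)$, and then Lemma \ref{tm.4} finishes. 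You should add a paragraph dealing with $\mathbf{k}=\mathbf{0}$, either by this route or by explicitly verifying that the argument of Lemma \ref{tm.7}(ii) remains valid in that degenerate situation.
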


\begin{proof} Assume first that $\mathbf{k} = \mathbf{0}$. Then
$\mathcal{P}(\mathbf{k}) = \emptyset$ and, by the multi-graded
analogue of \cite[Lemma 13.1.10]{LC}, there are
$R_{\mathbf{0}}$-isomorphisms $H^1_{yR}(M)_{\mathbf{n}} \cong
H^1_{yR_{\mathbf{0}}}(M_{\mathbf{n}})$ for all $\mathbf{n} \in
\Z^r$. Therefore $\mathcal{S}(H^1_{yR}(M)) \subseteq
\mathcal{S}(M)$, and the claim follows in this case from Lemma
\ref{tm.4}.

We now deal with the remaining case, where $\mathbf{k} \neq
\mathbf{0}$. Since (by the multi-graded analogue of
\cite[12.4.2]{LC}) there is a $\Z^r$-homogeneous epimorphism of
$\Z^r$-graded $R$-modules $D_{yR}(M) \lra H^1_{yR}(M)$, it suffices
for us to show that $\mathcal{S}(D_{yR}(M))$ is contained in a
$(\mathcal{P}(\mathbf{m}) \setminus \mathcal{P}(\mathbf{k}))$-domain
in $\Z^r$.

Recall that there is a homogeneous isomorphism $D_{yR}(M) \cong
M_y$, and so the multiplication map $y : D_{yR}(M) \lra
D_{yR}(M)(\mathbf{k})$ is an isomorphism, and $D_{yR}(M)$ is
finitely generated as an $R_y$-module. Since $R_{\mathbf{m}}
\subseteq \sqrt{(0:_RM)}$, there exists $u \in \N$ such that
$R_{u\mathbf{m}}M = 0$, so that $R_{u\mathbf{m}}M_y = 0$ and
$R_{u\mathbf{m}}D_{yR}(M) = 0$. Observe that
$\mathcal{P}(u\mathbf{m}) = \mathcal{P}(\mathbf{m})$. We now apply
Lemma \ref{tm.7}, with $D_{yR}(M)$ as the module $T$ and
$u\mathbf{m}$ in the r\^ole of $\mathbf{m}$: if
$\mathcal{P}(u\mathbf{m}) = \mathcal{P}(\mathbf{m}) \subseteq
\mathcal{P}(\mathbf{k})$, then part (i) of Lemma \ref{tm.7} yields
that $\mathcal{S}(D_{yR}(M)) = \emptyset$, while if
$\mathcal{P}(u\mathbf{m}) = \mathcal{P}(\mathbf{m}) \not\subseteq
\mathcal{P}(\mathbf{k})$, then it follows from part (ii) of Lemma
\ref{tm.7} that $\mathcal{S}(D_{yR}(M))$ is contained in a
$(\mathcal{P}(\mathbf{m}) \setminus \mathcal{P}(\mathbf{k}))$-domain
in $\Z^r$.
\end{proof}

\begin{lem}\label{tm.9} Let $\mathbf{m} \in \nn^r
\setminus \{\mathbf{0}\}$. Assume that $M$ is finitely generated and
that $R_{\mathbf{m}} \subseteq \sqrt{(0:_RM)}$. Then there exists a
$\mathcal{P}(\mathbf{m})$-domain $\X$ in $\Z^r$ such that
$\mathcal{S}(H^i_{\fb}(M)) \subseteq \X$ for all $i \in \nn$.
\end{lem}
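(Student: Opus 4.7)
The plan is to induct on the number $n$ of $\nn^r$-homogeneous generators of $\fb$, using a Mayer--Vietoris argument to handle the inductive step. First I will reduce the problem: since $R$ is Noetherian, write $\fb = (y_1, \ldots, y_n)R$ with each $y_j$ an $\nn^r$-homogeneous element of some degree $\mathbf{k}_j$; and because $H^i_{\fb}(M)$ is computed by the $(n+1)$-term \v{C}ech complex on these generators, $H^i_{\fb}(M) = 0$ whenever $i > n$. Thus, if I can show that $\mathcal{S}(H^i_{\fb}(M))$ is contained in some $\mathcal{P}(\mathbf{m})$-domain $\X_i$ for each individual $i$, then the finite union $\bigcup_{i=0}^n \X_i$ is contained in a single $\mathcal{P}(\mathbf{m})$-domain by Remark \ref{tm.6}(vi), yielding the desired $\X$.

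For the base cases $n \leq 1$: if $n = 0$ then $\fb = 0$, so $H^0_{\fb}(M) = M$ and Lemma \ref{tm.4} directly gives a $\mathcal{P}(\mathbf{m})$-domain containing $\mathcal{S}(M)$, while $H^i_{\fb}(M) = 0$ for $i \geq 1$. If $n = 1$, then $\fb = yR$ for some $y \in R_{\mathbf{k}}$; the submodule $H^0_{\fb}(M) \subseteq M$ is again controlled by Lemma \ref{tm.4}, while Lemma \ref{tm.8} places $\mathcal{S}(H^1_{\fb}(M))$ in a $(\mathcal{P}(\mathbf{m}) \setminus \mathcal{P}(\mathbf{k}))$-domain, which is a $\mathcal{P}(\mathbf{m})$-domain by Remark \ref{tm.6}(ii); higher cohomology vanishes.

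For the inductive step with $n \geq 2$, I decompose $\fb = \fa + \fc$ where $\fa := (y_1, \ldots, y_{n-1})R$ and $\fc := y_n R$, and apply the $\Z^r$-homogeneous Mayer--Vietoris sequence
$$\cdots \lra H^{i-1}_{\fa \cap \fc}(M) \lra H^i_{\fb}(M) \lra H^i_{\fa}(M) \oplus H^i_{\fc}(M) \lra \cdots$$
in $\mbox{\rm *}\mathcal{C}^{\Z^r}(R)$. Since local cohomology depends only on the radical of the defining ideal and $\sqrt{\fa \cap \fc} = \sqrt{\fa\fc}$, where $\fa\fc$ is generated by the $n-1$ $\nn^r$-homogeneous elements $y_1 y_n, \ldots, y_{n-1} y_n$, the inductive hypothesis applies to each of $\fa$, $\fc$, and the generating set of $\sqrt{\fa \cap \fc}$. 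Hence each of $\mathcal{S}(H^i_{\fa}(M))$, $\mathcal{S}(H^i_{\fc}(M))$, and $\mathcal{S}(H^{i-1}_{\fa \cap \fc}(M))$ lies in a $\mathcal{P}(\mathbf{m})$-domain; by exactness, $\mathcal{S}(H^i_{\fb}(M))$ is contained in the union of these three, and Remark \ref{tm.6}(vi) again collapses this union into a single $\mathcal{P}(\mathbf{m})$-domain.

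The main obstacle I anticipate is bookkeeping rather than any deep new idea: one must keep straight that the sets guaranteed by Lemmas \ref{tm.4} and \ref{tm.8} are indeed $\mathcal{P}(\mathbf{m})$-domains (invoking Remark \ref{tm.6}(ii) to enlarge the parameter $\mathcal{Q}$), and that the inductive hypothesis is applied to the \emph{radical} of $\fa \cap \fc$ rather than to $\fa \cap \fc$ itself --- which is harmless because local cohomology is insensitive to radicals. The robust closure of $\mathcal{P}(\mathbf{m})$-domains under finite unions (Remark \ref{tm.6}(vi)) is what makes the argument go through cleanly, both in patching the three cohomological contributions at each fixed degree $i$ and in collapsing the finitely many $\X_i$ into a single uniform $\X$ independent of $i$.
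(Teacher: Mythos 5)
Your proof is correct and follows essentially the same route as the paper's: reduce to fixed $i$ via vanishing of $H^i_{\fb}(M)$ for large $i$ and Remark \ref{tm.6}(vi), then induct on the number of $\nn^r$-homogeneous generators of $\fb$ using Lemmas \ref{tm.4} and \ref{tm.8} for the base case and the $\Z^r$-homogeneous Mayer--Vietoris sequence (together with $\sqrt{\fa\cap\fc} = \sqrt{\fa\fc}$, where $\fa\fc$ has one fewer homogeneous generator) for the inductive step. The only cosmetic differences are that you include the degenerate base case $\fb = 0$ and justify vanishing via the \v{C}ech complex rather than the arithmetic rank.
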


\begin{proof} Since $H^i_{\fb}(M) = 0$ for all $i > \ara (\fb)$, it
follows from Remark \ref{tm.6}(vi) that it is sufficient for us to
show that, for each $i \in \nn$, there exists a
$\mathcal{P}(\mathbf{m})$-domain $\X_i$ in $\Z^r$ such that
$\mathcal{S}(H^i_{\fb}(M)) \subseteq \X_i$. For $i = 0$, this is
immediate from Lemma \ref{tm.4}.

Let $y_1, \ldots, y_s$ be $\nn^r$-homogeneous elements of $R$ that
generate $\fb$. We argue by induction on $s$. When $s = 1$ and $i =
1$, the desired result follows from Lemma \ref{tm.8}; as we have
already dealt, in the preceding paragraph, with the case where $i =
0$, and as $H^i_{y_1R}(M) = 0$ for all $i > 1$, we have established
the desired result in all cases when $s = 1$.

So suppose now that $s > 1$ and that the desired result has been
proved in all cases where $\fb$ can be generated by fewer than $s$
$\nn^r$-homogeneous elements. Again, we have already dealt with the
case where $i = 0$. For $i \in \N$, there is an exact
Mayer--Vietoris sequence (in the category $\mbox{\rm
*}\mathcal{C}^{\Z^r}(R)$)
$$
\cdots \lra H^{i-1}_{(y_1y_s,\ldots,y_{s-1}y_s)R}(M) \lra
H^i_{\fb}(M) \lra H^{i}_{(y_1,\ldots,y_{s-1})R}(M)\oplus
H^i_{y_sR}(M) \lra \cdots.
$$
By the inductive hypothesis, there exist
$\mathcal{P}(\mathbf{m})$-domains $\X_i', \X_i'', \X_i'''$ in $\Z^r$
such that
$$
\mathcal{S}(H^{i-1}_{(y_1y_s,\ldots,y_{s-1}y_s)R}(M)) \subseteq
\X_i', \quad \mathcal{S}(H^{i}_{(y_1,\ldots,y_{s-1})R}(M)) \subseteq
\X_i'' \quad \mbox{and} \quad \mathcal{S}(H^{i}_{ y_{s} R}(M))
\subseteq \X_i'''.
$$
Therefore $\mathcal{S}(H^i_{\fb}(M)) \subseteq \X_i'\cup
\X_i''\cup\X_i'''$, and so the desired result follows from Remark
\ref{tm.6}(vi).
\end{proof}

\begin{lem}\label{tm.10} Let $\mathbf{m} \in \nn^r
\setminus \{\mathbf{0}\}$. Let $\fp_1, \ldots,\fp_n$ be prime ideals
of $R$ such that $R_{\mathbf{m}} \not\subseteq \fp_i$ for each $i=
1, \ldots, n$. Then there exists $u \in \N$ such that
$R_{u\mathbf{m}} \not\subseteq \bigcup_{i=1}^n\fp_i$
\end{lem}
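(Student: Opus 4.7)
The plan is to induct on $n$, in the spirit of the classical prime avoidance argument, but taking care to promote all the intermediate elements to a common $\Z^r$-degree of the shape $u\mathbf{m}$. The base case $n=1$ is immediate: any $x \in R_{\mathbf{m}} \setminus \fp_1$ is a witness with $u=1$.

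For the inductive step ($n \geq 2$), I would apply the inductive hypothesis $n$ times, once to each sub-family of size $n-1$ obtained by deleting a single $\fp_i$. This yields, for each $i \in \{1,\ldots,n\}$, an integer $u_i \in \N$ together with an element $y_i \in R_{u_i\mathbf{m}} \setminus \bigcup_{j \neq i}\fp_j$. To bring all these elements into a common graded piece, set $U := u_1 u_2 \cdots u_n$ and $z_i := y_i^{U/u_i}$. Because $R$ is standard, the relation $R_{\mathbf{s}+\mathbf{t}} = R_{\mathbf{s}} R_{\mathbf{t}}$ (used in the proof of Lemma \ref{mu.2}) gives $z_i \in R_{U\mathbf{m}}$, and primality of each $\fp_j$ preserves the property $z_i \notin \fp_j$ for $j \neq i$.

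If one of the $z_i$ happens to avoid $\fp_i$ as well, then $z_i \in R_{U\mathbf{m}} \setminus \bigcup_{k=1}^n\fp_k$ and the proof is complete with $u := U$. Otherwise every $z_i$ lies in $\fp_i$, and I form the prime-avoidance combination
\[
w := \sum_{i=1}^n \prod_{j \neq i} z_j.
\]
Each summand is a product of $n-1$ elements of $R_{U\mathbf{m}}$, so $w$ is homogeneous of $\Z^r$-degree $(n-1)U\mathbf{m}$. Fixing an arbitrary $k$: for $i \neq k$ the summand $\prod_{j \neq i} z_j$ contains the factor $z_k \in \fp_k$ and hence lies in $\fp_k$, whereas the summand with $i = k$ is the product $\prod_{j \neq k} z_j$ of elements $z_j \notin \fp_k$, which by primality is not in $\fp_k$. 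Thus $w \notin \fp_k$ for every $k$, and $u := (n-1)U$ works.

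The only real subtlety is this degree-matching step: the inductive hypothesis produces elements $y_i$ sitting in genuinely different graded pieces $R_{u_i\mathbf{m}}$, and one has to first raise each to an appropriate power before the symmetric combination $\sum_i \prod_{j \neq i} z_j$ even makes sense as a single homogeneous element. This is precisely where the standardness of $R$ is used; once all the $z_i$ live in the same $R_{U\mathbf{m}}$, the remainder is just the textbook verification of prime avoidance.
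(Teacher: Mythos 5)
Your proof is correct, but it takes a genuinely different route from the paper's. The paper gives a one-paragraph reduction: it considers the Noetherian $\nn$-graded subring $R_{\mathbf{0}}[R_{\mathbf{m}}] = \bigoplus_{j\in\nn}R_{j\mathbf{m}}$ (with $R_{j\mathbf{m}}$ placed in degree $j$) and simply applies the already-established singly-graded Homogeneous Prime Avoidance Lemma \cite[Lemma 15.1.2]{LC} to the irrelevant ideal $R_{\mathbf{m}}R_{\mathbf{0}}[R_{\mathbf{m}}]$ and the contracted primes $\fp_i \cap R_{\mathbf{0}}[R_{\mathbf{m}}]$. Your argument instead re-derives prime avoidance directly, with explicit degree bookkeeping: you apply the inductive hypothesis $n$ times, raise the resulting witnesses to a common degree $U\mathbf{m}$, and then (in the case where every $z_i$ lands in $\fp_i$) form the classical combination $\sum_{i}\prod_{j\neq i}z_j$, which is homogeneous of degree $(n-1)U\mathbf{m}$. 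Both arguments are valid; the paper's is shorter because it transfers the whole problem to a result already on the shelf, while yours is more elementary and self-contained. One small correction to your write-up: you do not actually use standardness of $R$. The only fact about degrees that your argument invokes is the containment $R_{\mathbf{s}}R_{\mathbf{t}}\subseteq R_{\mathbf{s}+\mathbf{t}}$, which holds in any $\Z^r$-graded ring and already gives $z_i = y_i^{U/u_i}\in R_{U\mathbf{m}}$ and $\prod_{j\neq i}z_j\in R_{(n-1)U\mathbf{m}}$; the reverse containment supplied by standardness plays no role. (Standardness is, however, genuinely used in the paper's proof, in identifying the $j$th graded piece of $R_{\mathbf{0}}[R_{\mathbf{m}}]$ with $R_{j\mathbf{m}}$.)
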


\begin{proof} Consider the (Noetherian) $\nn$-graded ring
$R_{\mathbf{0}}[R_{\mathbf{m}}] =
\bigoplus_{j\in\nn}R_{j\mathbf{m}}$ (in which  $R_{j\mathbf{m}}$ is
the component of degree $j$, for all $j \in \nn$). Apply the
ordinary Homogeneous Prime Avoidance Lemma (see \cite[Lemma
15.1.2]{LC}) to the graded ideal
$R_{\mathbf{m}}R_{\mathbf{0}}[R_{\mathbf{m}}] =
\bigoplus_{j\in\N}R_{j\mathbf{m}}$ and the prime ideals $\fp_i \cap
R_{\mathbf{0}}[R_{\mathbf{m}}]~(i=1, \ldots, n)$.
\end{proof}

\begin{lem}\label{tm.11} Let $\mathbf{m} \in \nn^r
\setminus \{\mathbf{0}\}$ and let $\X$ be a
$\mathcal{P}(\mathbf{m})$-domain in $\Z^r$. Then there exists $u \in
\N$ such that, for each $\mathbf{w} \in \Z^r$, there is some $j \in
\{0, \ldots, \#\mathcal{P}(\mathbf{m}) \}$ with $\mathbf{w}
+ju\mathbf{m} \not\in \X$.
\end{lem}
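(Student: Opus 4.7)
My plan is to use a pigeonhole argument based on the length of the ``forbidden interval'' in each coordinate direction of $\mathcal{P}(\mathbf{m})$.

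Write $\X = \X(\mathbf{s},\mathbf{t})$ with $\mathbf{s} \leq \mathbf{t}$ and $\mathcal{P}(\mathbf{t}-\mathbf{s}) \subseteq \mathcal{P}(\mathbf{m})$. Crucially, because $s_i = t_i$ for every $i \in \{1,\ldots,r\} \setminus \mathcal{P}(\mathbf{m})$, we can rewrite
\[
\X = \left\{ \mathbf{n} \in \Z^r : \text{there exists } i \in \mathcal{P}(\mathbf{m}) \text{ with } s_i \leq n_i < t_i\right\},
\]
so only the coordinates in $\mathcal{P}(\mathbf{m})$ matter. Set $k := \#\mathcal{P}(\mathbf{m})$.

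I would then choose $u \in \N$ large enough that $u m_i \geq t_i - s_i$ for every $i \in \mathcal{P}(\mathbf{m})$ (any such $u$ works; for instance $u := \max\{t_i - s_i : i \in \mathcal{P}(\mathbf{m})\}$, noting $m_i \geq 1$ here). The key observation is that for a fixed $i \in \mathcal{P}(\mathbf{m})$, the numbers $w_i + j u m_i$ with $j = 0, 1, \ldots, k$ form a strictly increasing arithmetic progression with common difference $u m_i \geq t_i - s_i$; hence at most one of them can lie in the half-open interval $[s_i, t_i)$ (two such values would differ by at least $u m_i \geq t_i - s_i$, contradicting both lying in an interval of length $t_i - s_i$).

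Now apply pigeonhole: there are $k+1$ candidate values of $j \in \{0,\ldots,k\}$ and only $k$ indices $i \in \mathcal{P}(\mathbf{m})$, each contributing at most one ``bad'' $j$ (the one for which $s_i \leq w_i + j u m_i < t_i$). So some $j^\ast \in \{0,\ldots,k\}$ is not bad for any $i \in \mathcal{P}(\mathbf{m})$; for this $j^\ast$ the point $\mathbf{w} + j^\ast u \mathbf{m}$ fails the defining condition of $\X$ at every $i \in \mathcal{P}(\mathbf{m})$ (and automatically at every $i \notin \mathcal{P}(\mathbf{m})$, where $s_i = t_i$), so $\mathbf{w} + j^\ast u \mathbf{m} \notin \X$, as required. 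There is no real obstacle here beyond correctly recording the ``length versus spacing'' inequality and invoking pigeonhole.
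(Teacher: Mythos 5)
Your proof is correct and essentially reproduces the paper's argument: choose $u$ so that $u m_i \geq t_i - s_i$ for $i \in \mathcal{P}(\mathbf{m})$, note that for each such $i$ the arithmetic progression $w_i + j u m_i$ ($j = 0, \ldots, \#\mathcal{P}(\mathbf{m})$) can land in $[s_i, t_i)$ for at most one $j$, and then apply a pigeonhole/counting argument over the $\#\mathcal{P}(\mathbf{m}) + 1$ values of $j$. The only cosmetic difference is the paper's introduction of the index set $\mathcal{I}(\mathbf{w})$, which you handle directly.
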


\begin{proof} There exist $\mathbf{s},\mathbf{t}
\in \Z^r$ with $\mathbf{s} \leq \mathbf{t}$ and
$\mathcal{P}(\mathbf{t} - \mathbf{s}) \subseteq
\mathcal{P}(\mathbf{m})$ for which $\X = \X(\mathbf{s},\mathbf{t})$.
Choose $u \in \N$ such that $u\mathbf{m} \geq \mathbf{t} -
\mathbf{s}$.

For an arbitrary $\mathbf{w} \in \Z^r$, set $\mathcal{I}(\mathbf{w})
= \left\{ i \in \{1, \ldots, r\} : s_i \leq w_i < t_i\right\}$, and
observe that $\mathcal{I}(\mathbf{w}) \subseteq
\mathcal{P}(\mathbf{m})$, and that $ \mathbf{w} \in \X$ if and only
if $\mathcal{I}(\mathbf{w}) \neq \emptyset$. Note also that, for $i
\in \mathcal{I}(\mathbf{w})$ and $ j \in \N$, we have
$$
(\mathbf{w} + ju\mathbf{m})_i = w_i + jum_i \geq s_i + um_i \geq s_i
+ t_i - s_i = t_i,
$$
so that $i \not\in \mathcal{I}(\mathbf{w}+ ju\mathbf{m})$. So, for
each $i \in \mathcal{P}(\mathbf{m})$, if there is a $j' \in \nn$
with $i \in \mathcal{I}(\mathbf{w}+ j'u\mathbf{m})$, then $i \not\in
\mathcal{I}(\mathbf{w}+ ju\mathbf{m})$ for all $j > j'$. This means
that, for each $i \in \mathcal{P}(\mathbf{m})$, there is at most one
$j' \in \nn$ with $i \in \mathcal{I}(\mathbf{w}+ j'u\mathbf{m})$. By
the pigeon-hole principle, it is therefore possible to choose a $j
\in \{0, \ldots, \#\mathcal{P}(\mathbf{m}) \}$ for which
$\mathcal{I}(\mathbf{w}+ ju\mathbf{m})\cap\mathcal{P}(\mathbf{m}) =
\emptyset$, and then $\mathbf{w}+ ju\mathbf{m} \not\in \X$.
\end{proof}

The concept introduced in the next definition can be regarded as a
multi-graded analogue of one defined by Marley in \cite[\S
2]{Marle95}.

\begin{defi}\label{tm.13def} Let $\mathcal{Q} \subseteq \{1, \ldots,
r\}$, and let $\fb$ be an $\nn^r$-graded ideal of $R$. We define the
{\em $\mathcal{Q}$-finiteness dimension\/}
$g^{\mathcal{Q}}_{\frak{b}}(M)$ {\em of\/} $M$ {\em with respect
to\/} $\frak{b}$ by
\[
g^{\mathcal{Q}}_{\frak{b}}(M) := \sup\!\left\{ k \in \nn :
\mbox{~for all~}i<k,\mbox{~there exists a $\mathcal{Q}$-domain
$\X_i$ in $\Z^r$ with~} \mathcal{S}(H^i_{\frak{b}}(M)) \subseteq
\X_i \right\},
\]
if this supremum exists, and $\infty$ otherwise.
\end{defi}

\begin{ex}\label{tm.13ex} For $R$ as in Example \ref{mpbex}, we have
$$
g^{\emptyset}_{R_+}(R)= 2, \qquad g^{\{1\}}_{R_+}(R)= 3, \qquad
g^{\{2\}}_{R_+}(R)= 2, \qquad g^{\{1,2\}}_{R_+}(R)= 5.
$$
\end{ex}

\begin{rmks}\label{tm.13}
The first three of the statements below are immediate from Remarks
\ref{tm.6}(i),(ii),(iii) respectively.

\begin{enumerate}
\item In the case where $\mathcal{Q} = \emptyset$, we have
$g^{\emptyset}_{\frak{b}}(M) = \inf\!\left\{ i \in \nn :
H^i_{\frak{b}}(M) \neq 0\right\}$ (with the usual convention that
the infimum of the empty set of integers is interpreted as
$\infty$).
\item If $\mathcal{Q} \subseteq \mathcal{Q}' \subseteq \{1, \ldots,
r\}$, then $g^{\mathcal{Q}}_{\frak{b}}(M) \leq
g^{\mathcal{Q}'}_{\frak{b}}(M)$.
\item For $\mathbf{n} \in \Z^r$, we have
$g^{\mathcal{Q}}_{\frak{b}}(M(\mathbf{n})) =
g^{\mathcal{Q}}_{\frak{b}}(M)$.
\item Let $\left( \mathcal{Q}_{\lambda}\right)_{\lambda \in
\Lambda}$ be a family of subsets of $\{1, \ldots, r\}$. Set
$$ \Omega := \left\{\bigcap_{\lambda \in \Lambda} \X_{\lambda} :
\X_{\lambda} \mbox{~is a $\mathcal{Q}_{\lambda}$-domain in $\Z^r$
for all~}\lambda \in \Lambda\right\}.
$$
It is straightforward to check that
$$
\inf\!\left\{ g^{\mathcal{Q}_{\lambda}}_{\frak{b}}(M) : \lambda \in
\Lambda\right\} = \sup\!\left\{ k \in \nn : \mbox{~for
all~}i<k,\mbox{~there exists $\Y_i \in \Omega$ with~}
\mathcal{S}(H^i_{\frak{b}}(M)) \subseteq \Y_i \right\}.
$$
\item Since a subset of $\Z^r$ is finite if and only if it is contained
in a set of the form $\bigcap_{j=1}^r \X_j$, where $\X_j$ is a
$\{j\}$-domain in $\Z^r$ for all $j \in \{1, \ldots, r\}$, it
therefore follows from part (iv) that
$$
\min\!\left\{ g^{\{1\}}_{\frak{b}}(M), \ldots ,
g^{\{r\}}_{\frak{b}}(M)\right\} = \sup\!\left\{ k \in \nn :
\mathcal{S}(H^i_{\frak{b}}(M)) \mbox{~is finite for all~}
i<k\right\}.
$$
Thus we can say that $\min\!\left\{ g^{\{1\}}_{\frak{b}}(M), \ldots
, g^{\{r\}}_{\frak{b}}(M)\right\}$ identifies the smallest integer
$i$ (if there be any) for which $H^i_{\frak{b}}(M)$ is not finitely
graded.
\end{enumerate}
\end{rmks}

\begin{prop}\label{tm.12} Let $\mathbf{m} \in \nn^r
\setminus \{\mathbf{0}\}$, and let $f \in \N$. Assume that $M$ is
finitely generated. The following statements are equivalent:

\begin{enumerate}
\item $R_{\mathbf{m}} \subseteq \sqrt{(0:_RH^i_{\fb}(M))}$ for all integers $i
< f$;
\item for each integer $i < f$, there is a
$\mathcal{P}(\mathbf{m})$-domain $\X_i$ in $\Z^r$ such that
$\mathcal{S}(H^i_{\fb}(M)) \subseteq \X_i$, that is $f \leq
g^{\mathcal{P}(\mathbf{m})}_{\fb}(M)$;
\item there is a
$\mathcal{P}(\mathbf{m})$-domain $\X$ in $\Z^r$ such that
$\mathcal{S}(H^i_{\fb}(M)) \subseteq \X$ for all integers $i < f$.
\end{enumerate}
\end{prop}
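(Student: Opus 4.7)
The plan is to establish the equivalences via the cycle (iii)$\Rightarrow$(ii)$\Rightarrow$(i)$\Rightarrow$(iii). The implication (iii)$\Rightarrow$(ii) is immediate, by taking each $\X_i = \X$. For (ii)$\Rightarrow$(iii), I would observe that $H^i_\fb(M) = 0$ whenever $i > \ara(\fb)$, so only finitely many of the $\X_i$ are non-trivial, and a single $\mathcal{P}(\mathbf{m})$-domain $\X$ absorbs them by Remark \ref{tm.6}(vi).

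For (ii)$\Rightarrow$(i), fix $i < f$ and apply Lemma \ref{tm.11} to the $\mathcal{P}(\mathbf{m})$-domain $\X_i$ to produce $u \in \N$ such that, for every $\mathbf{w} \in \Z^r$, some $j \in \{0, \ldots, \#\mathcal{P}(\mathbf{m})\}$ satisfies $\mathbf{w} + j u \mathbf{m} \notin \X_i$. Then for any $y \in R_\mathbf{m}$ and any $\Z^r$-homogeneous $z \in H^i_\fb(M)_{\mathbf{w}}$, the element $y^{ju}z$ lies in the vanishing component $H^i_\fb(M)_{\mathbf{w}+ju\mathbf{m}}$, and a further multiplication by $y^{u(\#\mathcal{P}(\mathbf{m}) - j)}$ yields $y^{u \#\mathcal{P}(\mathbf{m})}z = 0$. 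This gives $R_\mathbf{m} \subseteq \sqrt{(0:_R H^i_\fb(M))}$.

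The main implication (i)$\Rightarrow$(iii) I plan to prove by induction on $f$. The base case $f = 1$ follows from Lemma \ref{tm.4} applied to the finitely generated module $H^0_\fb(M) = \Gamma_\fb(M)$. For the inductive step with $f > 1$, I would first replace $M$ by $\overline{M} := M/\Gamma_\fb(M)$ to reduce to the $\fb$-torsion-free case (handling $\Gamma_\fb(M)$ separately via the base-case argument, since $H^i_\fb(M) \cong H^i_\fb(\overline{M})$ for $i \geq 1$). I would then select a $\Z^r$-homogeneous non-zero-divisor $y \in \fb$ on $M$ of degree $\mathbf{k}$ with $\mathcal{P}(\mathbf{m}) \subseteq \mathcal{P}(\mathbf{k})$. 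The long exact sequence induced by $0 \to M(-\mathbf{k}) \stackrel{y}{\to} M \to M/yM \to 0$ inserts each $H^i_\fb(M/yM)$ (for $i < f-1$) between a quotient of $H^i_\fb(M)$ and a submodule of $H^{i+1}_\fb(M)$, both of which are annihilated by suitable powers of $R_\mathbf{m}$ by hypothesis; hence (i) transfers to $M/yM$ at level $f-1$. The inductive hypothesis then places $\mathcal{S}(H^{f-2}_\fb(M/yM))$ in a $\mathcal{P}(\mathbf{m})$-domain, whence by exactness at $H^{f-1}_\fb(M)(-\mathbf{k})$ so does $\mathcal{S}\bigl(\ker(y \text{ on } H^{f-1}_\fb(M))\bigr)$ (shifts of $\mathcal{P}(\mathbf{m})$-domains remain such, by Remark \ref{tm.6}(iii)). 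A final application of Lemma \ref{tm.7}(i) to $T := H^{f-1}_\fb(M)$, which is annihilated by $R_{u\mathbf{m}}$ for some $u \in \N$ (with $\mathcal{P}(u\mathbf{m}) = \mathcal{P}(\mathbf{m}) \subseteq \mathcal{P}(\mathbf{k})$), expresses $\mathcal{S}(T)$ as a finite union of shifts of the kernel's support, and therefore inside a $\mathcal{P}(\mathbf{m})$-domain by Remark \ref{tm.6}(vi). The principal obstacle will be producing the non-zero-divisor $y \in \fb$ whose degree $\mathbf{k}$ has $\mathcal{P}(\mathbf{m}) \subseteq \mathcal{P}(\mathbf{k})$; I anticipate using Lemma \ref{tm.10} in combination with the standardness of $R$, multiplying a generic homogeneous non-zero-divisor in $\fb$ by elements of $R_{\mathbf{e}_j}$ for $j \in \mathcal{P}(\mathbf{m})$ chosen to avoid the finitely many associated primes of $M$.
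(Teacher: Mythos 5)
Your treatment of the easy implications is sound. The step (iii)$\Rightarrow$(ii) is trivial; for (ii)$\Rightarrow$(iii) only the $f$ domains $\X_0,\ldots,\X_{f-1}$ occur, so Remark \ref{tm.6}(vi) applies directly and the invocation of $\ara(\fb)$ is not needed. Your argument for (ii)$\Rightarrow$(i) via Lemma \ref{tm.11} is correct and is in substance the paper's argument for (iii)$\Rightarrow$(i) (the paper tracks $R_{vu\mathbf{m}}$ as an ideal rather than powers of a fixed $y$, but the content is the same).

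There is, however, a genuine gap in your inductive step for (i)$\Rightarrow$(iii), and it is exactly at the spot you flagged as the ``principal obstacle.'' After reducing to $\overline{M} := M/\Gamma_{\fb}(M)$ you need a $\Z^r$-homogeneous non-zero-divisor $y$ on $\overline{M}$ lying in $\fb$ whose degree $\mathbf{k}$ satisfies $\mathcal{P}(\mathbf{m}) \subseteq \mathcal{P}(\mathbf{k})$. Such a $y$ need not exist. If some $\fp \in \Ass_R(\overline{M})$ contains $R_{\mathbf{e}_j}$ for a $j \in \mathcal{P}(\mathbf{m})$, then by standardness and Lemma \ref{mu.2} every $\Z^r$-homogeneous element whose degree has positive $j$th component lies in $\fp$, hence is a zero-divisor on $\overline{M}$; so no homogeneous $y$ with $j\in\mathcal{P}(\deg y)$ can be a non-zero-divisor, whether or not it lies in $\fb$. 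A concrete instance: $R = k[x_1,x_2]$ with $\deg x_1=\mathbf{e}_1$, $\deg x_2=\mathbf{e}_2$, $M=R/x_1R$, $\fb=x_2R$, $\mathbf{m}=\mathbf{e}_1$. Hypothesis (i) holds for every $f$ (since $x_1M=0$) and $\Gamma_{\fb}(M)=0$, yet every homogeneous element of degree with first component positive kills $M$. Your fallback of multiplying a generic homogeneous non-zero-divisor in $\fb$ by elements of $R_{\mathbf{e}_j}$ fails for the same reason: when $R_{\mathbf{e}_j}$ sits inside an associated prime, there is no choice avoiding it.

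The paper circumvents this by taking $\overline{M} := M/\Gamma_{R_{\mathbf{m}}R}(M)$ instead, which by construction removes precisely those associated primes of $M$ that contain $R_{\mathbf{m}}$. Lemma \ref{tm.10} then produces a homogeneous non-zero-divisor $y$ on $\overline{M}$ lying in $R_{u\mathbf{m}}$ for a suitable $u$, so $\mathcal{P}(\deg y) = \mathcal{P}(\mathbf{m})$ holds automatically. Two further adjustments follow from this. First, $y$ need not (and typically will not) lie in $\fb$; the long exact sequence from $0 \to \overline{M}(-u\mathbf{m}) \xrightarrow{\,y\,} \overline{M} \to \overline{M}/y\overline{M} \to 0$ and Lemma \ref{tm.7}(i) use no relationship between $y$ and $\fb$, so the requirement $y\in\fb$ in your sketch should simply be dropped. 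Second, since $\Gamma_{R_{\mathbf{m}}R}(M)$ is generally not $\fb$-torsion, the identification $H^i_{\fb}(M) \cong H^i_{\fb}(\overline{M})$ for $i \geq 1$ on which your base-case handling of the torsion part relied no longer applies; the paper instead controls $\mathcal{S}\bigl(H^{f-1}_{\fb}(\Gamma_{R_{\mathbf{m}}R}(M))\bigr)$ by Lemma \ref{tm.9} and splices the result into the exact sequence $H^{f-1}_{\fb}(\Gamma_{R_{\mathbf{m}}R}(M)) \to H^{f-1}_{\fb}(M) \to H^{f-1}_{\fb}(\overline{M})$. The rest of your inductive scheme (transferring (i) to $\overline{M}/y\overline{M}$, locating $\mathcal{S}(\ker y)$ in a shifted domain, and concluding with Lemma \ref{tm.7}(i) together with Remarks \ref{tm.6}(iii),(vi)) is the paper's argument.
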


\begin{proof} (ii) $\Leftrightarrow$ (iii) This is immediate from
Remark \ref{tm.6}(vi).

(iii) $\Rightarrow$ (i) Assume that statement (iii) holds. By Lemma
\ref{tm.11}, there exist $u, v := \#\mathcal{P}(\mathbf{m}) \in \N$
such that, for each $\mathbf{n} \in \Z^r$, there exists
$j(\mathbf{n}) \in \{0, \ldots, v\}$ with $\mathbf{n} +
j(\mathbf{n})u\mathbf{m} \not\in\X$. So, for each $\mathbf{n} \in
\Z^r$ and each integer $i < f$, we have $H^i_{\fb}(M)_{\mathbf{n} +
j(\mathbf{n})u\mathbf{m}} = 0$ and
$$
R_{vu\mathbf{m}}H^i_{\fb}(M)_{\mathbf{n}} =
R_{vu\mathbf{m}-j(\mathbf{n})u\mathbf{m}}R_{j(\mathbf{n})u\mathbf{m}}H^i_{\fb}(M)_{\mathbf{n}}
\subseteq
R_{vu\mathbf{m}-j(\mathbf{n})u\mathbf{m}}H^i_{\fb}(M)_{\mathbf{n}+j(\mathbf{n})u\mathbf{m}}
= 0.
$$
Therefore $R_{vu\mathbf{m}}H^i_{\fb}(M) = 0$ for all integers $i <
f$, and hence $$(R_{\mathbf{m}})^{vu} \subseteq R_{vu\mathbf{m}}
\subseteq (0:_RH^i_{\fb}(M))\quad \mbox{for all~} i < f.$$

(i) $\Rightarrow$ (ii) Assume that statement (i) holds. We argue by
induction on $f$. When $f = 1$, the desired conclusion is immediate
from Lemma \ref{tm.4} (applied to $H^0_{\fb}(M)$).

So assume now that $f > 1$ and that statement (ii) has been proved
for smaller values of $f$. This inductive hypothesis implies that
there exist $\mathcal{P}(\mathbf{m})$-domains $\X_0, \ldots,
\X_{f-2}$ in $\Z^r$ such that $\mathcal{S}(H^i_{\fb}(M)) \subseteq
\X_i$ for all $i \in \{0, \ldots, f-2\}$. It thus remains to find a
$\mathcal{P}(\mathbf{m})$-domain $\X_{f-1}$ in $\Z^r$ such that
$\mathcal{S}(H^{f-1}_{\fb}(M)) \subseteq \X_{f-1}$.

Set $\overline{M} := M/\Gamma_{R_{\mathbf{m}}R}(M)$, and observe
that $R_{\mathbf{m}} \subseteq
\sqrt{(0:_R\Gamma_{R_{\mathbf{m}}R}(M))}$. It therefore follows from
Lemma \ref{tm.9} that there is a $\mathcal{P}(\mathbf{m})$-domain
$\X'$ in $\Z^r$ such that
$\mathcal{S}(H^{f-1}_{\fb}(\Gamma_{R_{\mathbf{m}}R}(M))) \subseteq
\X'$. In view of the exact sequence of $\Z^r$-graded $R$-modules
$$
H^{f-1}_{\fb}(\Gamma_{R_{\mathbf{m}}R}(M)) \lra H^{f-1}_{\fb}(M)
\lra H^{f-1}_{\fb}(\overline{M})
$$
and Remark \ref{tm.6}(vi), it is now enough for us to show that
$\mathcal{S}(H^{f-1}_{\fb}(\overline{M}))$ is contained in a
$\mathcal{P}(\mathbf{m})$-domain in $\Z^r$.

As $R_{\mathbf{m}} \subseteq
\sqrt{(0:_RH^j_{\fb}(\Gamma_{R_{\mathbf{m}}R}(M)))}$ for all $j \in
\nn$, the exact sequence
$$
H^{i}_{\fb}(M) \lra H^{i}_{\fb}(\overline{M}) \lra
H^{i+1}_{\fb}(\Gamma_{R_{\mathbf{m}}R}(M))
$$
shows that $R_{\mathbf{m}} \subseteq
\sqrt{(0:_RH^i_{\fb}(\overline{M}))}$ for all integers $i < f$. Set
$\Ass_R(\overline{M}) =:\left\{ \fp_1, \ldots, \fp_k\right\}$. As
$R_{\mathbf{m}}R$ does not consist entirely of zero-divisors on
$\overline{M}$, we have $R_{\mathbf{m}} \not\subseteq \fp_i$ for
each $i = 1, \ldots, k$. Therefore, by Lemma \ref{tm.10}, there
exists $u' \in \N$ such that $R_{u'\mathbf{m}} \not\subseteq
\bigcup_{i=1}^k\fp_i$, and hence there exists $y' \in
R_{u'\mathbf{m}}$ which is not a zero-divisor on $\overline{M}$. We
can now take a sufficiently high power $y$ of $y'$ to find $u \in
\N$ and $y \in R_{u\mathbf{m}}$ such that
$R_{u\mathbf{m}}H^{f-1}_{\fb}(\overline{M}) = 0$ and $y$ is a
non-zero-divisor on $\overline{M}$, so that there is a short exact
sequence of $\Z^r$-graded $R$ modules
$$
0 \lra \overline{M}(-u\mathbf{m}) \stackrel{y}{\lra}\overline{M}
\lra \overline{M}/y\overline{M} \lra 0.
$$
It now follows from the long exact sequence of local cohomology
modules induced from the above short exact sequence that
$R_{\mathbf{m}} \subseteq
\sqrt{(0:_RH^i_{\fb}(\overline{M}/y\overline{M}))}$ for all integers
$i < f-1$. Therefore, by the inductive hypothesis, there is a
$\mathcal{P}(\mathbf{m})$-domain $\X''$ in $\Z^r$ such that
$\mathcal{S}(H^{f-2}_{\fb}(\overline{M}/y\overline{M})) \subseteq
\X''$. Let $K$ be the kernel of the map $H^{f-1}_{\fb}(\overline{M})
\lra H^{f-1}_{\fb}(\overline{M})(u\mathbf{m})$ provided by
multiplication by $y$. The long exact sequence of local cohomology
modules induced from the last-displayed short exact sequence now
shows that $\mathcal{S}(K) \subseteq \X'' - u\mathbf{m}$.

We now apply Lemma \ref{tm.7}(i) to $H^{f-1}_{\fb}(\overline{M})$,
with $u\mathbf{m}$ playing the r\^oles of both $\mathbf{m}$ and
$\mathbf{k}$: the conclusion is that there exists $v \in \nn$ such
that
$$\mathcal{S}(H^{f-1}_{\fb}(\overline{M})) \subseteq
\bigcup_{j=0}^v (\mathcal{S}(K) - ju\mathbf{m}) \subseteq
\bigcup_{j=0}^v (\X'' - u\mathbf{m} - ju\mathbf{m}).
$$
We can now use Remarks \ref{tm.6}(iii),(vi) to deduce the existence
of a $\mathcal{P}(\mathbf{m})$-domain $\X_{f-1}$ in $\Z^r$ such that
$\mathcal{S}(H^{f-1}_{\fb}(\overline{M})) \subseteq \X_{f-1}$. With
this, the proof is complete.
\end{proof}

We now connect the concept of $\mathcal{Q}$-finiteness dimension of
$M$ with respect to $\frak{b}$, introduced in Definition
\ref{tm.13}, with the concept of $\fa$-finiteness dimension of $M$
relative to $\frak{b}$ (where $\fa$ is a second ideal of $R$),
studied by Faltings in \cite{Falti78}. (See also \cite[Chapter
9]{LC}.)

\begin{rmd}\label{tm.14} Assume that $M$ is finitely generated, and
let $\fa, \fd$ be ideals of $R$ (not necessarily graded).

The {\it $\fa$-finiteness dimension\/} $f^{\fa}_{\fd}(M)$ {\it of\/}
$M$ {\it relative to\/} $\fd$ is defined by
$$  f^{\fa}_{\fd}(M)  =
\inf\!\left\{ i \in \nn : \fa \not\subseteq \sqrt{(0 :
H^i_{\fd}(M))} \right\}
$$
and the {\it $\fa$-minimum $\fd$-adjusted depth
$\lambda^{\fa}_{\fd}(M)$ of $M$} is defined by
$$
\lambda^{\fa}_{\fd}(M) := \inf\!\left\{ \depth M_{\frak{p}} +
\height (\fd + \frak{p})/\frak{p} : \frak{p} \in \Spec(R) \setminus
\Var(\fa)  \right\}.
$$
(Here, $\Var(\fa)$ denotes the {\em variety $\{\fp \in \Spec (R) :
\fp \supseteq \fa\}$ of $\fa$\/.}) It is always the case that
$f^{\fa}_{\fd}(M) \leq \lambda^{\fa}_{\fd}(M)$; Faltings' (Extended)
Annihilator Theorem \cite{Falti78} states that if $R$ admits a
dualizing complex or is a homomorphic image of a regular ring, then
$f^{\fa}_{\fd}(M) = \lambda^{\fa}_{\fd}(M)$. (See \cite[Corollary
3.8]{BRS} for an account of the extended version of Faltings'
Annihilator Theorem.)
\end{rmd}

\begin{rmk}\label{tm.14B} Let the situation be as in Reminder
\ref{tm.14}, let $K \subseteq R$, and let $\left( K_{j}\right)_{j
\in J}$ be a family of subsets of $R$.

\begin{enumerate}
\item It is easy to deduce from the definition that $f^{KR}_{\fd}(M) =
\inf\!\left\{f^{aR}_{\fd}(M) : a \in K\right\}$.
\item We can then deduce from part (i) that $f^{(\bigcup_{j \in J}K_{j})R}_{\fd}(M) =
\inf\!\left\{f^{K_{j}R}_{\fd}(M) : j \in J \right\}$.
\item Similarly, it is easy to deduce from the definition that $\lambda^{KR}_{\fd}(M) =
\inf\!\left\{\lambda^{aR}_{\fd}(M) : a \in K\right\}$.
\item We can then deduce from part (iii) that $\lambda^{(\bigcup_{j \in J}K_{j})R}_{\fd}(M) =
\inf\!\left\{\lambda^{K_{j}R}_{\fd}(M) : j \in J \right\}$.
\end{enumerate}
\end{rmk}

\begin{thm}\label{tm.15} Assume that $M$ is
finitely generated, and let $\emptyset \neq \mathcal{T} \subseteq
\nn^r$.
\begin{enumerate}
\item We have
\begin{align*}
\sup\!\left\{ k \in\nn \right. &: \mbox{for all $i < k$ and all
$\mathbf{m} \in \mathcal{T}$, there exists a
$\mathcal{P}(\mathbf{m})$-domain $\X_i^{(\mathbf{m})}$ in $\Z^r$} \\
&\left.\mbox{~such that
$\mathcal{S}(H^i_{\fb}(M)) \subseteq \X_i^{(\mathbf{m})}$}\right\}\\
& = \inf\!\left\{ g^{\mathcal{P}(\mathbf{m})}_{\fb}(M) : \mathbf{m}
\in \mathcal{T} \right\}\\
& = f^{\sum_{\mathbf{m} \in \mathcal{T}}R_{\mathbf{m}}R}_{\fb}(M)
\leq \lambda^{\sum_{\mathbf{m} \in
\mathcal{T}}R_{\mathbf{m}}R}_{\fb}(M).
\end{align*}
\item If $R$ admits a dualizing complex or is a homomorphic image of a regular ring, then we can replace the
inequality in part\/ {\rm (i)} by equality.
\end{enumerate}
\end{thm}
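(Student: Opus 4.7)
The plan is to unwind the definitions and then invoke the preparatory results already established. The first equality in part (i) is purely formal: the bracketed condition in the supremum expression reads ``for all $i<k$ and all $\mathbf{m} \in \mathcal{T}$, \ldots''; by independence, for a fixed $k$ this condition is equivalent to ``for each $\mathbf{m} \in \mathcal{T}$, $k$ satisfies the condition defining $g^{\mathcal{P}(\mathbf{m})}_{\fb}(M)$.'' Taking suprema yields exactly $\inf\{g^{\mathcal{P}(\mathbf{m})}_{\fb}(M) : \mathbf{m} \in \mathcal{T}\}$.

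The middle equality is the substantive one. For each $\mathbf{m} \in \mathcal{T}$ with $\mathbf{m} \neq \mathbf{0}$, I would note that, since $R_{\mathbf{m}}$ generates the ideal $R_{\mathbf{m}}R$, we have $R_{\mathbf{m}}R \subseteq \sqrt{(0:_R H^i_{\fb}(M))}$ if and only if $R_{\mathbf{m}} \subseteq \sqrt{(0:_R H^i_{\fb}(M))}$. From the definition of $f^{\fa}_{\fd}(M)$ in Reminder \ref{tm.14} this means
\[
f^{R_{\mathbf{m}}R}_{\fb}(M) = \sup\left\{f \in \nn : R_{\mathbf{m}} \subseteq \sqrt{(0:_R H^i_{\fb}(M))} \text{ for all } i<f \right\},
\]
and the equivalence (i) $\Leftrightarrow$ (ii) of Proposition \ref{tm.12} identifies this value with $g^{\mathcal{P}(\mathbf{m})}_{\fb}(M)$. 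The case $\mathbf{m} = \mathbf{0}$, should it arise, is handled directly: the only $\emptyset$-domain in $\Z^r$ is the empty set, so by Remark \ref{tm.13}(i), $g^{\emptyset}_{\fb}(M) = \inf\{i : H^i_{\fb}(M) \neq 0\} = f^R_{\fb}(M) = f^{R_{\mathbf{0}}R}_{\fb}(M)$. Then Remark \ref{tm.14B}(ii), applied with $J = \mathcal{T}$ and $K_{\mathbf{m}} = R_{\mathbf{m}}$, yields
\[
f^{\sum_{\mathbf{m} \in \mathcal{T}}R_{\mathbf{m}}R}_{\fb}(M) \;=\; \inf\!\left\{f^{R_{\mathbf{m}}R}_{\fb}(M) : \mathbf{m} \in \mathcal{T}\right\} \;=\; \inf\!\left\{g^{\mathcal{P}(\mathbf{m})}_{\fb}(M) : \mathbf{m} \in \mathcal{T}\right\},
\]
establishing the middle equality.

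The final inequality in part (i), and the equality claim in part (ii), are then direct citations of the material recalled in Reminder \ref{tm.14}: the inequality $f^{\fa}_{\fd}(M) \leq \lambda^{\fa}_{\fd}(M)$ holds universally, and Faltings' (Extended) Annihilator Theorem promotes it to equality precisely when $R$ admits a dualizing complex or is a homomorphic image of a regular ring. There is no real obstacle here, as all the technical content has been absorbed into Proposition \ref{tm.12}; the only small care needed is in verifying the $\mathbf{m} = \mathbf{0}$ case, which is immediate from Remark \ref{tm.13}(i).
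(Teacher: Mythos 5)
Your proof is correct and follows essentially the same route as the paper: a formal rearrangement for the first equality (the paper packages this as an appeal to Remark \ref{tm.13}(iv), which is equivalent to your direct ``independence'' argument), then Proposition \ref{tm.12} to identify $g^{\mathcal{P}(\mathbf{m})}_{\fb}(M)$ with $f^{R_{\mathbf{m}}R}_{\fb}(M)$, then Remark \ref{tm.14B}(ii) to assemble the infimum, and finally Faltings' theorem. One small point in your favour: since $\mathcal{T}$ is only required to be a non-empty subset of $\nn^r$, it may contain $\mathbf{0}$, and Proposition \ref{tm.12} is stated only for $\mathbf{m}\in\nn^r\setminus\{\mathbf{0}\}$; you handle that case explicitly via Remark \ref{tm.13}(i), whereas the paper's invocation of Proposition \ref{tm.12} for ``all $\mathbf{m}\in\mathcal{T}$'' quietly glosses over it.
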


\begin{proof} Apply Remark \ref{tm.13}(iv) to the family
$\left(\mathcal{P}(\mathbf{m})\right)_{\mathbf{m} \in \mathcal{T}}$
of subsets of $\{1, \ldots, r\}$ to conclude that
\begin{align*}
\sup\!\left\{ k \in\nn \right. &: \mbox{for all $i < k$ and all
$\mathbf{m} \in \mathcal{T}$, there exists a
$\mathcal{P}(\mathbf{m})$-domain $\X_i^{(\mathbf{m})}$ in $\Z^r$} \\
&\left.\mbox{~such that
$\mathcal{S}(H^i_{\fb}(M)) \subseteq \X_i^{(\mathbf{m})}$}\right\}\\
& = \inf\!\left\{ g^{\mathcal{P}(\mathbf{m})}_{\fb}(M) : \mathbf{m}
\in \mathcal{T} \right\}.
\end{align*}
By Proposition \ref{tm.12}, we have
$g^{\mathcal{P}(\mathbf{m})}_{\fb}(M) =
f^{R_{\mathbf{m}}R}_{\fb}(M)$ for all $\mathbf{m} \in \mathcal{T}$.
Therefore, on use of Remark \ref{tm.14B}(ii), we deduce that
$$ \inf\!\left\{ g^{\mathcal{P}(\mathbf{m})}_{\fb}(M) : \mathbf{m}
\in \mathcal{T} \right\} = \inf\!\left\{
f^{R_{\mathbf{m}}R}_{\fb}(M) : \mathbf{m} \in \mathcal{T} \right\} =
f^{\sum_{\mathbf{m} \in \mathcal{T}}R_{\mathbf{m}}R}_{\fb}(M).
$$
We can now use Faltings' (Extended) Annihilator Theorem
\cite{Falti78} (see Reminder \ref{tm.14}) to complete the proof of
part (i) and to obtain the statement in part (ii).
\end{proof}

\begin{cor}\label{tm.16} Assume that $M$ is
finitely generated.
\begin{enumerate}
\item For each non-empty set $\mathcal{T} \subseteq \mathbf{1} +
\nn^r$, we have $$f^{\sum_{\mathbf{m} \in
\mathcal{T}}R_{\mathbf{m}}R}_{\fb}(M) = g^{\{1, \ldots,
r\}}_{\fb}(M).$$
\item For each set $\mathcal{T} \subseteq \nn^r \setminus \{\mathbf{0}\}$ such that
$\N \mathbf{e}_i \cap \mathcal{T} \neq \emptyset$ for all $i \in
\{1, \ldots, r\}$, we have \begin{align*}f^{\sum_{\mathbf{m} \in
\mathcal{T}}R_{\mathbf{m}}R}_{\fb}(M) &=  \sup\!\left\{ k \in \nn :
\mathcal{S}(H^i_{\frak{b}}(M)) \mbox{~is finite for all~}
i<k\right\}\\ &= \sup\!\left\{ k \in \nn : H^i_{\frak{b}}(M)
\mbox{~is finitely graded for all~} i<k\right\}.
\end{align*}
\item If $M \neq \fb M$, then $f^R_{\fb}(M) = g^{\emptyset}_{\fb}(M)
= \grade_M\fb$.
\end{enumerate}
\end{cor}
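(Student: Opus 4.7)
The plan is to derive all three parts from Theorem \ref{tm.15}(i) (which identifies $\inf_{\mathbf{m}\in\mathcal{T}}g^{\mathcal{P}(\mathbf{m})}_{\fb}(M)$ with $f^{\sum_{\mathbf{m}\in\mathcal{T}}R_{\mathbf{m}}R}_{\fb}(M)$), together with the bookkeeping facts in Remarks \ref{tm.13}(i),(ii),(v). The arguments are essentially matter of determining which subsets $\mathcal{P}(\mathbf{m}) \subseteq \{1,\ldots,r\}$ arise as $\mathbf{m}$ runs through $\mathcal{T}$; no new analysis of local cohomology is needed. I do not expect a serious obstacle here — the content of the corollary is really a re-reading of Theorem \ref{tm.15} in three natural special cases.

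For part (i): every $\mathbf{m} \in \mathbf{1}+\nn^r$ has all coordinates positive, hence $\mathcal{P}(\mathbf{m}) = \{1,\ldots,r\}$ for every $\mathbf{m} \in \mathcal{T}$. Consequently the infimum in Theorem \ref{tm.15}(i) collapses to the single value $g^{\{1,\ldots,r\}}_{\fb}(M)$, and that theorem then gives the asserted equality with $f^{\sum_{\mathbf{m}\in\mathcal{T}}R_{\mathbf{m}}R}_{\fb}(M)$.

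For part (ii): the hypothesis $\N\mathbf{e}_i \cap \mathcal{T} \neq \emptyset$ supplies, for each $i \in \{1,\ldots,r\}$, some $\mathbf{m}_i \in \mathcal{T}$ with $\mathcal{P}(\mathbf{m}_i) = \{i\}$. Combined with the monotonicity $g^{\mathcal{Q}}_{\fb}(M) \leq g^{\mathcal{Q}'}_{\fb}(M)$ for $\mathcal{Q} \subseteq \mathcal{Q}'$ (Remark \ref{tm.13}(ii)), this forces
\[
\inf\{g^{\mathcal{P}(\mathbf{m})}_{\fb}(M) : \mathbf{m}\in\mathcal{T}\}
= \min\{g^{\{1\}}_{\fb}(M),\ldots,g^{\{r\}}_{\fb}(M)\}.
\]
By Remark \ref{tm.13}(v) the right-hand side coincides with $\sup\{k \in \nn : \mathcal{S}(H^i_{\fb}(M))\text{ is finite for all }i<k\}$. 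Since a $\Z^r$-graded module $L$ is finitely graded exactly when $\mathcal{S}(L)$ is finite, the two suprema in the statement of (ii) are visibly equal, and Theorem \ref{tm.15}(i) then identifies this common value with $f^{\sum_{\mathbf{m}\in\mathcal{T}}R_{\mathbf{m}}R}_{\fb}(M)$.

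For part (iii): Remark \ref{tm.13}(i) gives $g^{\emptyset}_{\fb}(M) = \inf\{i \in \nn : H^i_{\fb}(M)\neq 0\}$. When $M \neq \fb M$, the standard identification of grade with the first non-vanishing index of local cohomology yields $\inf\{i : H^i_{\fb}(M)\neq 0\} = \grade_M \fb$. Finally, from the definition $f^R_{\fb}(M) = \inf\{i \in \nn : R \not\subseteq \sqrt{(0:H^i_{\fb}(M))}\}$ we observe that $R \subseteq \sqrt{(0:N)}$ if and only if $1$ annihilates $N$, i.e.\ $N = 0$; hence $f^R_{\fb}(M) = \inf\{i : H^i_{\fb}(M)\neq 0\}$, completing the chain of equalities.
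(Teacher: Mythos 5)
Your proof is correct and follows the paper's argument essentially verbatim: parts (i) and (ii) are exactly the paper's deduction from Theorem \ref{tm.15}(i) together with Remarks \ref{tm.13}(ii),(v), and part (iii) reaches the same chain of equalities, arguing directly from the definition of $f^R_{\fb}(M)$ and Remark \ref{tm.13}(i) (where the paper instead routes through Theorem \ref{tm.15}(i) applied with $\mathcal{T} = \{\mathbf{0}\}$, which amounts to the same thing).
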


\begin{note} If, in the case where $r=1$, we take $\mathcal{T} =
\N$, so that $\sum_{m \in \mathcal{T}}R_{m}R = R_+$, then the
statement in part (ii) becomes
$$f^{R_+}_{\fb}(M) = \sup\!\left\{ k \in \nn : H^i_{\frak{b}}(M)
\mbox{~is finitely graded for all~} i<k\right\},$$ a result proved
by Marley in \cite[Proposition 2.3]{Marle95}.
\end{note}

\begin{proof} (i) By Theorem \ref{tm.15}(i), we have $f^{\sum_{\mathbf{m} \in
\mathcal{T}}R_{\mathbf{m}}R}_{\fb}(M) = \inf\!\left\{
g^{\mathcal{P}(\mathbf{m})}_{\fb}(M) : \mathbf{m} \in \mathcal{T}
\right\}$. But $\mathcal{P}(\mathbf{m}) = \{1,\ldots,r\}$ for all
$\mathbf{m} \in \mathbf{1} + \nn^r$.

(ii) By Theorem \ref{tm.15}(i), we have $$f^{\sum_{\mathbf{m} \in
\mathcal{T}}R_{\mathbf{m}}R}_{\fb}(M) = \inf\!\left\{
g^{\mathcal{P}(\mathbf{m})}_{\fb}(M) : \mathbf{m} \in \mathcal{T}
\right\}.$$ By the hypothesis, for each $i \in \{1, \ldots, r\}$,
there exists $\mathbf{m}_i \in \mathcal{T}$ with
$\mathcal{P}(\mathbf{m}_i) = \{i\}$. It therefore follows from
Remark \ref{tm.13}(ii) that $\inf\!\left\{
g^{\mathcal{P}(\mathbf{m})}_{\fb}(M) : \mathbf{m} \in \mathcal{T}
\right\} = \min\!\left\{ g^{\{1\}}_{\frak{b}}(M), \ldots ,
g^{\{r\}}_{\frak{b}}(M)\right\}$. However, we noted in Remark
\ref{tm.13}(v) that $$\min\!\left\{ g^{\{1\}}_{\frak{b}}(M), \ldots
, g^{\{r\}}_{\frak{b}}(M)\right\} = \sup\!\left\{ k \in \nn :
\mathcal{S}(H^i_{\frak{b}}(M)) \mbox{~is finite for all~}
i<k\right\}.$$

(iii) Since $R = R_{\mathbf{0}}R$, we can deduce from Theorem
\ref{tm.15}(i) and Remark \ref{tm.13}(i) that
$$
f^R_{\fb}(M) = f^{R_{\mathbf{0}}R}_{\fb}(M) =
g^{\mathcal{P}(\mathbf{0})}_{\fb}(M) = g^{\emptyset}_{\fb}(M) =
\sup\!\left\{ k \in \nn : H^i_{\frak{b}}(M) =0 \mbox{~for all~} i <
k\right\} = \grade_M\fb.
$$
\end{proof}

\bibliographystyle{amsplain}

\end{document}